\theoremstyle{plain}
\newtheorem{theorem}{Theorem}[section]
\newtheorem{proposition}[theorem]{Proposition}
\newtheorem{lemma}[theorem]{Lemma}
\newtheorem{corollary}[theorem]{Corollary}
\newtheorem{problem}[theorem]{Problem}
\newtheorem{question}[theorem]{Question}
\newtheorem*{proposition*}{Proposition}
\theoremstyle{definition}
\newtheorem{definition}[theorem]{Definition}
\newtheorem{example}[theorem]{Example}
\newtheorem{examples}[theorem]{Examples}
\theoremstyle{remark}
\newtheorem{remark}[theorem]{Remark}
\newtheorem{discussion}[theorem]{Discussion}
\newcommand{\secref}[1]{Section~\ref{#1}}
\newcommand{\thmref}[1]{Theorem~\ref{#1}}
\newcommand{\propref}[1]{Proposition~\ref{#1}}
\newcommand{\lemref}[1]{Lemma~\ref{#1}}
\newcommand{\corref}[1]{Corollary~\ref{#1}}
\newcommand{\exref}[1]{Example~\ref{#1}}
\newcommand{\queref}[1]{Question~\ref{#1}}
\newcommand{\remref}[1]{Remark~\ref{#1}}
\newcommand{\defref}[1]{Definition~\ref{#1}}
\newcommand{\figref}[1]{Figure~\ref{#1}}
\newcommand{\disref}[1]{Discussion~\ref{#1}}
\numberwithin{subsection}{theorem}
\def\map{\mathrm{map}}
\def\cat{{\sf{cat}}}
\def\dcat{{\sf{d\text{-}cat}}}
\newcommand{\be}{\begin{enumerate}}
\newcommand{\ee}{\end{enumerate}}
\newcommand{\R}{\mathbb{R}}
\newcommand{\Z}{\mathbb{Z}}
\newcommand{\N}{\mathbb{N}}
\newcommand{\C}{\mathbb{C}}
\begin{document}

\title[Homotopy Theory in  Digital Topology]{Homotopy Theory in  Digital Topology}

\author{Gregory Lupton}
\author{John Oprea}
\author{Nicholas A. Scoville}

\address{Department of Mathematics, Cleveland State University, Cleveland OH 44115 U.S.A.}

\email{g.lupton@csuohio.edu}
\email{j.oprea@csuohio.edu}

\address{Department of Mathematics and Computer Science, Ursinus College, Collegeville PA 19426 U.S.A.}

\email{nscoville@ursinus.edu}

\date{\today}

\keywords{Digital Image, Digital Topology, Function Space, Path Space, Subdivision, Cofibration, Homotopy Lifting Property, Winding Number,  Lusternik-Schnirelmann category}
\subjclass[2010]{ (Primary) 54A99 55M30 55P05 55P99;  (Secondary) 54A40 68R99 68T45 68U10}

\begin{abstract} Digital topology is part of the ongoing endeavour to understand and analyze digitized images. With a view to supporting this endeavour, many notions from algebraic topology have been introduced into the setting of digital topology.  But some of the most basic notions from homotopy theory remain largely absent from the digital topology literature. We embark on a development of homotopy theory in digital topology, and define such fundamental notions as function spaces, path spaces, and cofibrations in this setting.  We establish digital analogues of basic homotopy-theoretic properties such as the homotopy extension property for cofibrations, and the homotopy lifting property for certain evaluation maps that correspond to path fibrations in the topological setting.  We indicate that some depth may be achieved by using these homotopy-theoretic notions to give a preliminary treatment of Lusternik-Schnirelmann category in the digital topology setting.  This topic provides a connection between digital topology and critical points of functions on manifolds, as well as other topics from topological dynamics.
\end{abstract}

\thanks{This work was partially supported by grants from the Simons Foundation: (\#209575 to Gregory Lupton
and \#244393 to John Oprea).}

\maketitle

\section{Introduction}

In digital topology, the basic object of interest  is a  \emph{digital image}: a finite set of integer lattice points in an ambient Euclidean space with a suitable adjacency relation between points.  This is an abstraction of an actual digital image which consists of  pixels (in the plane, or higher dimensional analogues of such).  There is an extensive literature with many results that apply topological ideas in this setting (e.g. \cite{Ro86, K-R-R92, Bo99, Evako2006}).   Many of these results are obtained by importing key topological concepts from the ordinary topological setting into the digital setting, which is more discrete or combinatorial, rather than topological in nature.  Concepts from point-set topology such as continuity, the Jordan curve theorem, arc connectedness, boundary, closure, and nowhere dense all have digital analogues \cite{Kong91}. Several attempts have been made to do algebraic topology  and in particular homotopy theory in the digital setting (e.g.~\cite{GR05, GLU14, MVK14}). But the combination of homotopy theory and digital topology is not yet really mature and most of the literature involves fairly elementary ingredients from algebraic topology, such as the fundamental group or (low-dimensional) homology groups (see, e.g., \cite{Bo99} or \cite{GLU14} for references).  Furthermore, in many instances the notions that have been  established are quite  restrictive with limited applicability.  The result has been that, so far, very little depth has been achieved by combining algebraic and digital topology. 

 In contrast to this existing literature, we seek to  build a more general ``digital homotopy theory" that brings the full strength of homotopy theory to the digital setting.  We use less rigid constructions, with a view towards broad applicability and greater depth of development.  We begin this project here:  we establish general
constructions such as mapping spaces, path spaces,  cofibrations, and certain path fibrations in the digital topology setting, and thereby bring some of the more sophisticated tools and methods from homotopy theory to bear on this topic.  Our development here is deep enough to allow for a preliminary discussion of Lusternik-Schnirelmann category in the digital topology setting, for instance.   Other contributions to the project are given in \cite{LOS19c, LOS19b}.   
At the end of the paper, we indicate how future work will continue to develop a more robust and fuller \emph{digital homotopy theory}.

We now discuss a sample of the literature in this area. Several of  the articles we mention here and elsewhere in this paper give references to  other recent work in the field.   The article \cite{K-R-R92} contains a basic introduction to digital topology and some common themes of the subject.   Various notions of continuous functions and their ramifications for such concepts as homeomorphisms, retracts, and homotopy equivalence are discussed in \cite{Han10}.      The fundamental group of a digital image is discussed in \cite{Kong89} and \cite{Bo99}, including its relation to products \cite{BK12} and Euler characteristic \cite{BS16}. Furthermore, first attempts at higher homotopy groups are discussed in \cite{ADFQ03} and \cite{MVK14}.  Covering spaces are studied in \cite{Ha05, Bo06, B-K10}.  General properties of homotopy and homotopy equivalence are investigated in \cite{Bo05, B-S17, B-S18, H-M-P-S15}.  A notion of fibration in the digital setting is given in \cite{Eg-Ka17}.   But a major drawback with many of these papers, from our point of view, is that the notions established tend to be very rigid.  A typical example of this issue is provided by \cite{Eg-Ka17}, which defines a fibration in the digital setting by directly translating the topological definition into the digital setting.  But then it is difficult to display an example of a fibration with this definition\footnote{In that paper, there is an error in the discussion of Section 4, which invalidates the example given there.  This leaves a constant map as the only example of a fibration presented.}, and no developments flow from the notion introduced.   Similarly, in \cite{Bo-Ve18, KaIs18} definitions of Lusternik-Schnirelmann category and topological complexity, each of which is a numerical homotopy invariant, are translated directly from the topological to the digital setting, again with the result that the digital versions of these invariants (as defined  there) are too rigid to allow for much development.

In contrast to this tendency in the literature of directly translating topological notions into the digital setting, we have found that  the essential notion of \emph{subdivision} should be used to develop less rigid notions better suited to homotopy theory.   For example, in \secref{sec: cofibrations} we develop a notion of cofibration that incorporates subdivisions in a crucial way.  There we establish basic examples that display a form of the homotopy extension property: a homotopy may be extended after allowing for suitable subdivisions.  This is a recurrent theme in our development.  We find that, to develop a less rigid theory (i.e., one with interesting examples), one should allow for suitable subdivisions in the definitions and constructions desired.  This philosophy is on display throughout.  We follow it, for instance, in our versions of the following: homotopy extension and lifting properties (\defref{def: digital cofibration}, \corref{cor: path fibration}); contractibility of one digital image in another (\defref{def: U contractible in X}).  We make some further comments along these lines in \secref{sec: future}.   In addition to the results of this paper, other aspects of our broader digital homotopy theory program are represented in the papers \cite{LOS19c, LOS19b}.  In \cite{LOS19b}, we establish crucial results about the behaviour of maps with respect to subdivision. In \cite{LOS19c} we give a treatment of a fundamental group in the digital setting in which subdivision plays a prominent role.  This paper and \cite{LOS19c, LOS19b} complement each other within our broad digital homotopy theory program.  However, this paper is independent of  \cite{LOS19c, LOS19b} with one exception: One item of \secref{sec: cat and TC} (\thmref{thm: 2D dcat well-def}) uses a result from \cite{LOS19b} in its proof.

A more general notion than that of a digital image that has also appeared in the literature is that of a \emph{tolerance space} (see \cite{So86, Peters12}).  The same notion is called a \emph{fuzzy space} in \cite{Po71}, which refers to earlier work on this topic by Zeeman and Poincar{\'e}.  In these references, and especially in \cite{Po71}, many basic ideas from algebraic topology are mentioned in the tolerance space setting.  One defines a tolerance space as a set with an adjacency relation (i.e., a reflexive, symmetric, but generally not transitive binary relation).  The notion is equivalent to that of a simple graph, with edges corresponding to adjacencies (except for self-adjacencies).  But one does not assume an embedding into an integral lattice.  In fact, any (finite) tolerance space---or simple graph---may be embedded as a digital image in some (perhaps high-dimensional) $\Z^n$.  But there is no canonical way of doing so.  We keep our focus on the digital setting, although some of the notions we use here could be developed in the tolerance space setting.  But we note  that there is not really a good notion of subdivision in the tolerance space setting, and so, in so far as notions from algebraic topology have been developed in that setting, we see the same type of rigidity mentioned above that does not seem well-suited to homotopy theory.

We briefly summarize the content and organization of the paper as follows.  \secref{sec: basics} serves as a brief introduction to digital topology, and at the same time sets some basic conventions and notation. Perhaps the main idea reviewed here is that of subdivision, in \defref{def: subdivision}.  We continue to introduce basic notions in \secref{sec: homotopy}, although here the notions of function space and  path space are new to digital topology.  The notion of homotopy we use is the expected one, obtained by translating the topological definition into the digital setting.  Whereas homotopy in this form  ((A) of \defref{def: Left-Right homotopy}) has appeared in the literature, we note that  the particular adjacency relation we use on the product leads to consequences that differ from some in the literature (see \remref{rem: graph product homotopy}). Furthermore, an exponential correspondence also allows us to treat homotopy from a path object point of view,  which is a new way of treating homotopy in the digital literature (\defref{def: digital homotopy (bis)}).
In \secref{sec: cofibrations} we introduce a notion of cofibration into  the  digital setting.  A key point to note here is that we avoid translating the topological definition directly into the digital setting.  Rather, the notion we give incorporates subdivision in a crucial way.  Doing so allows us to establish such basic examples as the inclusion of one or both endpoints into an interval as a cofibration (\thmref{thm: well-pointed interval}  and \thmref{thm: endpoint cofibration}).  In  \secref{sec: fibrations} we add significant depth to the development by building on these results.     Using a digital version of a theorem that, in the topological setting, translates cofibrations to fibrations (\thmref{thm: Digital Borsuk}), we establish that certain evaluation maps have an adapted form of the homotopy lifting property, namely, that they are path \emph{fibrations} in a certain sense (\corref{cor: path fibration}, \corref{cor: TC fibration}, \corref{cor: based path fibration}).   In \secref{sec: Diamond Winding Number} we consider loops in a particular digital image and develop an invariant in this context that closely resembles the winding number from the ordinary topological or complex analytical setting.   Our development continues to expand in \secref{sec: cat and TC}, where we offer a short treatment of Lusternik-Schnirelmann category.  This is a numerical invariant that, whilst well-known in the ordinary topological setting, has not been developed greatly in the digital setting and which could prove useful for feature recognition, for instance. We establish some basic facts about this invariant and calculate its value for a digital image that may be considered the prototype of a digital circle (\propref{prop: cat of D}).  This calculation relies on the results of \secref{sec: Diamond Winding Number}.  In \corref{cor: d-cat = d-secat},  we apply the results of  \secref{sec: fibrations} to characterize this numerical invariant in terms of local sections of a certain path fibration.  This is a noteworthy result, in terms of our ``digital homotopy theory agenda," as it illustrates the possibility of establishing digital versions of constructions and results from ordinary homotopy theory that have greater depth of development  than any that have previously appeared in the digital topology  literature.         In the final \secref{sec: future} we indicate some questions and directions for future work.

\section{Basic Notions: Adjacency, Continuity, Products, Subdivision}\label{sec: basics}

The topics in this section are standard in digital topology and appear frequently in the literature.  We include them here as a convenience for the reader, to establish notation, and to emphasize the particular ingredients that we will use in the sequel.

In this paper, a \emph{digital image} $X$ means a finite subset $X \subseteq \Z^n$ of the integral lattice in some $n$-dimensional Euclidean space, together with
a particular  adjacency relation inherited from that of $\Z^n$.  Namely, two points $x = (x_1, \ldots, x_n) \in \Z^n$ and  $y = (y_1, \ldots, y_n) \in \Z^n$  are adjacent if their 
coordinates satisfy $|x_i-y_i| \leq 1$ for each $i = 1, \dots, n$.

\begin{remark}
In the literature, it is common to allow for various choices of adjacency.  For example, a planar digital image is a subset of $\Z^2$ with either ``$4$-adjacency" or ``$8$-adjacency" (see, e.g.~Section 2 of \cite{Bo99}).  However, in this paper, we always assume (a subset of) $\Z^n$ has the highest degree of adjacency possible ($8$-adjacency in $\Z^2$, $26$-adjacency in $\Z^3$, etc.).   In fact, there is a philosophical reason for our fixed choice of adjacency relation:  It is effectively forced on us by the considerations of  \defref{def: products} and \exref{ex: diagonal} below.
\end{remark}

If $x, y \in X \subseteq \Z^n$, we write $x \sim_X y$ to denote that $x$ and $y$ are adjacent. For digital images $X\subseteq \Z^n$ and $Y \subseteq \Z^m$, a  function $f\colon X \to Y$  is \emph{continuous} if $f(x) \sim_Y f(y)$ whenever $x \sim_Xy$.
By a \emph{map} of digital images, we mean a continuous function.  Occasionally, we may encounter a non-continuous function of digital images.  But, mostly, we deal with maps---continuous functions---of digital images.

An \emph{isomorphism} of digital images is a continuous bijection $f \colon X \to Y$ that admits a continuous inverse $g \colon Y \to X$, so that we have $f\circ g = \mathrm{id}_Y$ and $g\circ f = \mathrm{id}_X$ (such a $g$ is necessarily bijective).  If $f \colon X \to Y$ is an isomorphism, then we say that $X$ and $Y$ are isomorphic digital images, and write $X \cong Y$.

\begin{examples}\label{ex:basic digital images}
We use the notation $I_N$ for the \emph{digital interval of length} $N$, namely $I_N \subseteq \Z$ consists of the integers from $0$ to $N$ in $\Z$, and consecutive
integers are adjacent. Thus, we have $I_1 = \{0, 1\}$, $I_2 = \{0, 1, 2\}$, and so-on.  As an example in $\Z^2$, consider $D = \{ (1, 0), (0, 1), (-1, 0), (0, -1) \}$, which may be viewed as a digital circle (see \figref{fig:D & C}).  Note that pairs of vertices all of whose coordinates differ by $1$, such as  $(1, 0)$ and  $(0, 1)$ here, are adjacent according to our definition.  Otherwise, $D$ would be disconnected.   This example is \emph{the Diamond}  and we will establish several facts about it in the paper, starting with  \propref{prop: diamond non-contractible}.
\begin{figure}[h!]
\centering
   \begin{subfigure}{0.4\linewidth} \centering
    \includegraphics[trim=225 510 250 80,clip,width=0.5\textwidth]{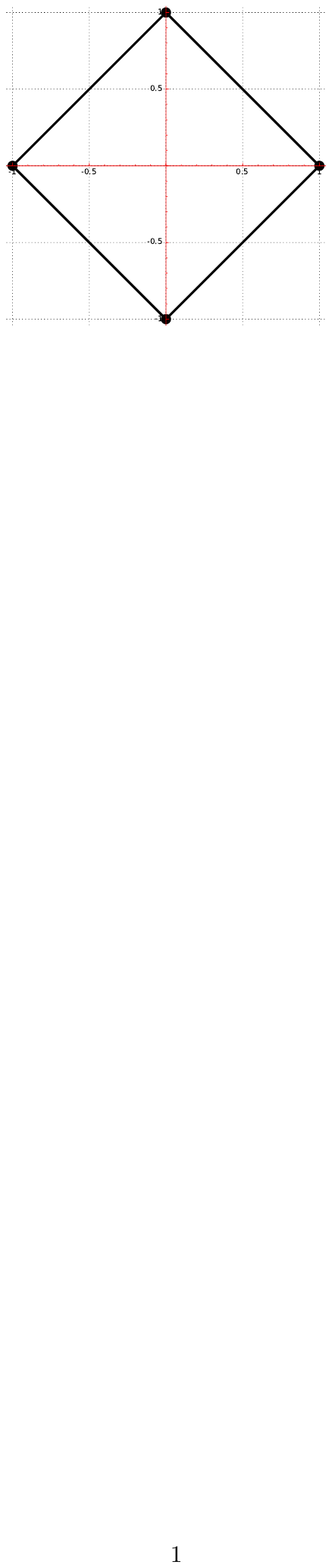}
     \caption{$D$: The Diamond}\label{fig:Circle D}
   \end{subfigure}
   \begin{subfigure}{0.59\linewidth} \centering
    \includegraphics[trim=180 445 195 120,clip,width=0.5\textwidth]{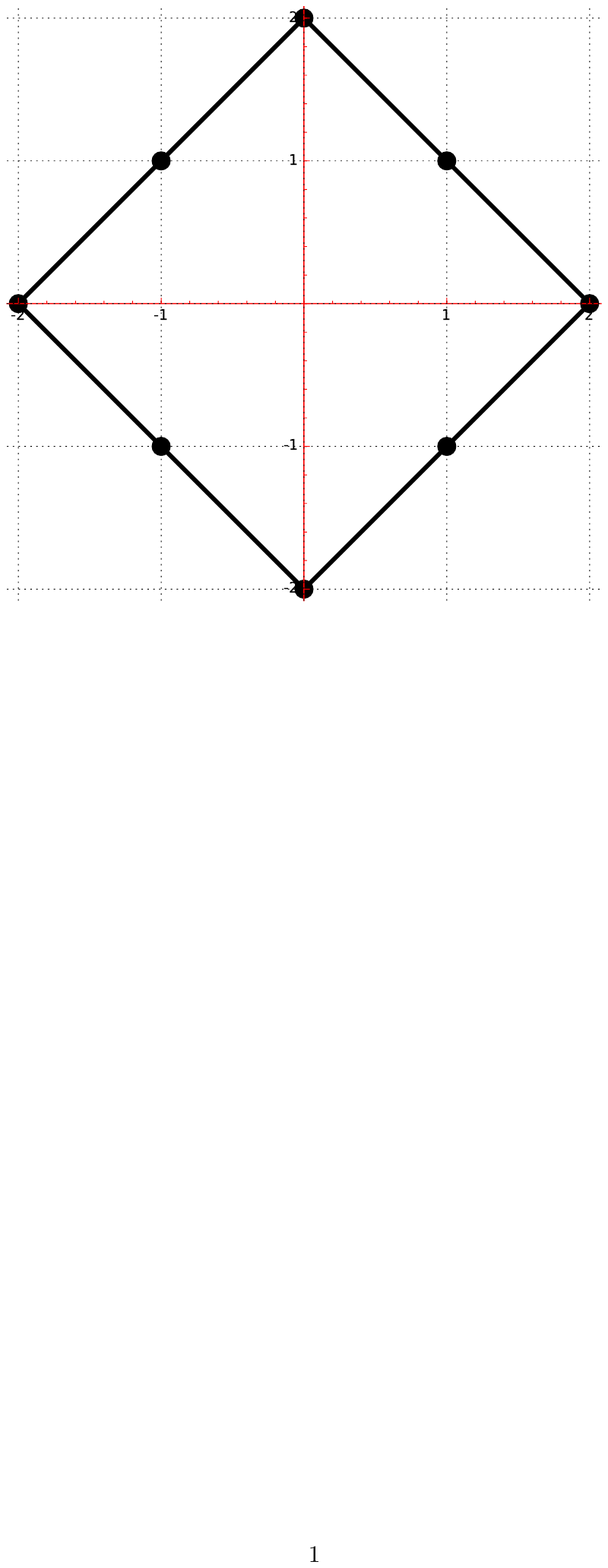}
     \caption{$C$: A larger digital circle}\label{fig:Circle C}
   \end{subfigure}
\caption{Two digital circles.} \label{fig:D & C}
\end{figure}
In \figref{fig:D & C} we have included the axes (in red) and also indicated adjacencies in the style of a graph.  Note, though, that we have no choice as to which points are adjacent: this is determined by position, or coordinates, and we do not choose to add or remove edges here.
 As an example in  $\Z^3$, we have $S = \{ (1, 0, 0), (0, 1, 0), (-1, 0, 0), (0, -1, 0), (0, 0, 1), (0, 0, -1) \}$ (the vertices of an octahedron, with adjacencies corresponding to the edges of the octahedron).  This may be viewed as a digital $2$-sphere, and the pattern emerging here may be continued to a digital $n$-sphere in $\Z^{n+1}$ with $2n+2$ vertices. 
 
 The map $f\colon I_2 \to I_1$ given by $f(0) = 0$, $f(1) = 0$, and $f(2) = 1$ is continuous, but the function  $g\colon I_1 \to I_2$ given by $g(0) = 0$, $g(1) = 2$ is not: we cannot ``stretch" an interval to a longer one.  Likewise, suppose we enlarge $D$ to the bigger digital circle   $C = \{ (2, 0), (1, 1), (0, 2), (-1, 1), (-2, 0), (-1, -1), (0, -2), (1, -1) \}$ (see \figref{fig:D & C}).  Then the only maps $D \to C$ will be ``homotopically trivial" maps---in a sense we will define later.  We cannot ``wrap" a smaller circle around a larger one.
\end{examples}

Because we want to use constructions such as the diagonal map as well as other maps into or out of products, we need to be clear about the adjacency relation in a product.

\begin{definition}[digital products]\label{def: products}
The \emph{product} of digital images $X$ and $Y$ is the Cartesian product of sets $X \times Y$ with the adjacency relation $(x, y) \sim_{X \times Y} (x', y')$ when $x\sim_X x'$  and $y \sim_Y y'$.
\end{definition}

In fact,  this is tantamount to our assumption that $\Z^n$, and any digital image in it, has the highest degree of adjacency possible, with the isomorphisms $\Z^n \cong \Z^r \times \Z^{n-r}$ for $r = 1, \ldots, n-1$.  Note that some authors in the literature use a different adjacency relation on the product: the \emph{graph product}, whereby $(x, y)$ is adjacent to $(x', y')$ if $x = x'$ and $y\sim_Y y'$, or $x \sim_X x'$ and $y = y'$.  The notion we use is sometimes called the  \emph{strong product}, in a graph theory setting.  Our definition of (adjacency on) the product means that it is the categorical product, in the category of (finite) digital images and digitally continuous maps.

\begin{example}\label{ex: diagonal}
Suppose we have $X = I_1 \subseteq \Z$.  Then the \emph{diagonal map} $\Delta\colon I_1 \to I_1 \times I_1 \subseteq \Z^2$ is given by $\Delta(x) = (x, x)$.  Since $0 \sim_X 1$, we need $(0,0) \sim_{X \times X} (1, 1)$ if the diagonal map is to be continuous, which of course we do have with our conventions.
\end{example}

\begin{definition}\label{def: map product}
Given maps of digital images $f_i \colon X_i \to Y_i$ for $i = 1, 2$, we define their product in the usual way as 
$$f_1 \times  f_2 \colon X_1 \times X_2 \to Y_1 \times Y_2$$
with $(f_1 \times f_2) (x_1,  x_2) = \big(f_1(x_1), f_2(x_2) \big)$.  The product of maps is a continuous map, as follows easily from the definitions.  
\end{definition}

Of course, the product of digital images and maps of digital images may be extended to any (finite) number of factors.

The notion of \emph{subdivision of a digital image} plays an important role in many of our definitions and constructions.

\begin{definition}\label{def: subdivision}
Suppose that $X$ is a digital image in $\Z^n$.  For each $k \geq 2$, we have  a \emph{$k$-fold subdivision of $X$}, which is an auxiliary (to $X$) digital image in $\Z^n$ denoted by $S(X, k)$, and also a \emph{canonical map} or projection
$$\rho_k\colon  S(X, k) \to X$$
that is continuous in the digital sense. This goes as follows.   For a real number $x$, denote by $\lfloor x \rfloor$ the greatest integer less-than-or-equal-to $x$.  First, make
the $\Z[1/k]$-lattice in $\R^n$, namely, those points with coordinates each of which is $z/k$ for some integer $z$, and then set
$$S'(X, k) = \left\{ (x_1, \ldots, x_n) \in \left(\Z\left[\frac{1}{k}\right]\right)^n  \mid ( \lfloor x_1 \rfloor, \ldots, \lfloor x_n \rfloor) \in X \right\}.$$
Then set
$$ S(X,k) = \left\{ (kx_1, \ldots, kx_n) \in \Z^n  \mid (x_1, \ldots, x_n) \in S'(X, k) \right\}.$$
The map $\rho_k$ is given by $\rho_k\big( (y_1, \ldots, y_n) \big) = ( \lfloor y_1/k \rfloor, \ldots, \lfloor y_n/k \rfloor)$, and one checks that this map is continuous.
For $x \in X$ an individual point, we write $S(x, k) \subseteq S(X, k)$ for the points $y \in S(X, k)$ that satisfy $\rho_k(y) = x$.  If $x = (x_1, \ldots, x_n)$ is a point in an $n$-dimensional digital image, then we may describe this set in general as
\begin{equation}\label{eq: point subdn}
S(x, k) = \{ (kx_1 + r_1, \ldots, kx_n + r_n) | 0 \leq r_i \leq k-1 \}.
\end{equation}
That is, for each $x \in X$, $S(x, k)$ is an $n$-dimensional cubical lattice in $\Z^n$ with each side of the cubical lattice containing $k$ points.  Notice that  the result of subdivision therefore depends on the ambient space of the digital image.
\end{definition}

In \cite{LOS19b}, we give a number of illustrative examples of subdivision.
Subdivision behaves well with respect to products.  For any digital images $X \subseteq \Z^m$ and $Y \subseteq \Z^n$ and any $k \geq 2$ we have an obvious isomorphism
$$S(X \times Y, k) \cong S(X, k) \times S(Y, k)$$
and, furthermore, the standard projection $\rho_k\colon S(X \times Y, k) \to X \times Y$ may be identified with the product of the standard projections on $X$ and $Y$, thus:
$$\rho_k = \rho_k \times \rho_k\colon  S(X, k) \times S(Y, k) \to X \times Y.$$
Note also that we may iterate subdivision.  It is straightforward to check that, for any $k, l$, we have an isomorphism of digital images 
$S\big( S(X, k), l\big) \cong S(X, kl)$.

By an \emph{inclusion of digital images} $j \colon A \to X \subseteq \Z^n$ we mean that $A$ is a subset of $X$ (the coordinates of a point of $A$ remain the same under inclusion into $X$).
It is easy to see that, given an \emph{inclusion} of digital images (of the same dimension) $j \colon A \to X \subseteq \Z^n$, we have an obvious corresponding continuous  inclusion of subdivisions $S(j, k)\colon S(A, k) \to S(X, k)$ such that the diagram
$$\xymatrix{
S(A, k) \ar[d]_{\rho_k} \ar[r]^{S(j, k)} & S(X, k) \ar[d]^{\rho_k}\\
A \ar[r]_{j} & X}$$
commutes.  We say that the map  $S(j, k)$ \emph{covers} the map $j$.   Indeed, we may give an explicit formula as follows.  For each point  $a \in A$, write $a = (a_1, \ldots, a_n)$. Also, write $t = (t_1, \ldots, t_n)$, with $0 \leq t_1, \ldots, t_n \leq k-1$, for a typical point $t$ in the cubical $k \times k \times \cdots \times k$ lattice $(I_{k-1})^n\subseteq \Z^n$.  Then the points of $S(a, k) \subseteq S(A, k)$ may be written as
$$S(a, k) = \{ k \,a + t \mid t \in (I_{k-1})^n \} = \{ (ka_1 + t_1, \ldots, ka_n+t_n) \mid 0 \leq t_1, \ldots, t_n \leq k-1 \},$$
with $\rho_k( k\, a + t) = a$ for all $t \in (I_{k-1})^n$.  Here, the scalar multiple $k\,a$ and the  sum $k\,a + t$ denote coordinate-wise (vector) scalar multiplication and addition in $\Z^n$.   Then $S(j, k)\colon S(A, k) \to S(X, k)$ may be written as
\begin{equation}\label{eq: canonical subdn map}
S(j, k)\big(  k\,a + t  \big) =   k\,j(a) + t,
\end{equation}
where $j(a) = (a_1, \ldots, a_n) \in X$.  It is easy to confirm that this gives a (continuous) map.
   We will make use of these induced maps of subdivisions in our development.

Note, however, that a general map $f \colon X \to Y$ may not induce a map of subdivisions, at least not in an obvious, canonical way.
In \cite{LOS19b}, we give a  full discussion of subdivision of a map.

Note also that, in general, we do not have a (continuous) right inverse to the projection $\rho_k\colon  S(X, k) \to X$.  There are a small number of exceptions to this general rule, and we make use of them in our development later.

\section{Function Spaces, Path Spaces, Homotopy}\label{sec: homotopy}

In this section, we introduce several topics that have not been studied previously in digital topology.
In homotopy theory, function spaces play a principal role.  For example, spaces of paths or loops are ubiquitous.  Furthermore the exponential correspondence, which (under mild hypotheses) gives a homeomorphism
$$\map(X \times Y, Z) \equiv \map\big(X, \map(Y, Z)\big),$$
plays a prominent role in the development of ideas.  This correspondence  identifies a map $F\colon X \times Y \to Z$ with its adjoint $\widehat{F} \colon X \to \map(Y, Z)$ defined by
$$F(x, y) = \widehat{F}(x)(y).$$
If $X$, $Y$, and $Z$ are digital images, we already have a notion of continuity for maps  $F\colon X \times Y \to Z$.  We now define a notion of adjacency in $\map(Y, Z)$, and hence a notion of continuity for maps $X \to \map(Y, Z)$, in such a way that the exponential correspondence preserves continuity.

\begin{definition}\label{def: adjacent functions}
Suppose $X$, $Y$ and $Z$ are digital images.  We define the \emph{digital function space} $\map(Y, Z)$ as the set of all  maps $Y \to Z$ with adjacency as follows:
For $f, g \in \map(Y, Z)$, we say that $f$ and $g$ are \emph{adjacent in $\map(Y, Z)$}, and write $f \sim_{\map(Y, Z)} g$,  if $f(y)\sim_Z g(y')$  whenever $y \sim_Y y'$.

For reasons  that  will emerge below (cf.~\lemref{lem: N-stage homotopy}), we sometimes use the more compact notation $f \approx_1 g$ in place of $f \sim_{\map(Y, Z)} g$, especially if the function space in which $f$ and $g$ are adjacent is clear from context.   Moreover, we say that a \emph{function $G \colon X \to \map(Y, Z)$ is continuous} if $G(x) \approx_1G(x')$  whenever $x \sim_X x'$.
\end{definition}

 \begin{remark}
 In considering digital function spaces, it seems we are passing out of the category of digital images and maps.   However, this fact does not seem to cause problems in our development.  The situation is perhaps comparable to that of ordinary homotopy theory, whereby a function space $\map(Y, Z)$ is generally not a CW complex, and certainly is not a finite-dimensional space, even though $Y$ and $Z$ may be.  Nonetheless, function spaces still play a useful role there.

The \emph{a priori} more general setting of \emph{tolerance spaces} that we mentioned in the introduction does extend to include function spaces as we have defined  them here.
 \end{remark}

 \begin{proposition}[Exponential law]\label{prop: exponential law}
  Suppose $X$, $Y$, and $Z$ are digital images, and that $F\colon X \times Y \to Z$ and $\widehat{F} \colon X \to \map(Y, Z)$ are adjoint under the exponential correspondence, so that
$F(x, y) = \widehat{F}(x)(y)$.  Then $F$ is continuous if and only $\widehat{F}$ is continuous.
 \end{proposition}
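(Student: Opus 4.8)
The plan is to prove both implications at once by unwinding the two notions of continuity into statements phrased entirely in terms of $F$, and then observing that they nearly coincide. First I would record the one structural feature of our adjacency relation that drives the whole argument: it is \emph{reflexive}. Since $|x_i - x_i| = 0 \leq 1$ in each coordinate, every point of a digital image is adjacent to itself; consequently, by \defref{def: products}, $(x,y) \sim_{X\times Y} (x,y')$ whenever $y \sim_Y y'$ (take $x = x'$), and symmetrically in the first coordinate.

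Next I would translate continuity of $\widehat{F}$ into two conditions on $F$. For $\widehat{F}$ to be a function \emph{into} $\map(Y,Z)$ at all, each value $\widehat{F}(x)$ must be a genuine element of the function space, i.e.\ a continuous map $Y \to Z$; since $\widehat{F}(x)(y) = F(x,y)$, this is the condition (i) that $F(x,y) \sim_Z F(x,y')$ whenever $y \sim_Y y'$. For $\widehat{F}$ itself to be continuous we need $\widehat{F}(x) \approx_1 \widehat{F}(x')$ whenever $x \sim_X x'$, which by \defref{def: adjacent functions} unwinds to the condition (ii) that $F(x,y) \sim_Z F(x',y')$ whenever $x \sim_X x'$ and $y \sim_Y y'$. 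The key observation is that (ii) is \emph{verbatim} the definition of continuity of $F\colon X \times Y \to Z$.

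With this dictionary the two implications fall out immediately. For the forward direction, assume $F$ is continuous; then (ii) holds by definition, which is exactly the adjacency-preservation making $\widehat{F}$ continuous, while (i) follows by specializing continuity of $F$ to the case $x = x'$, legitimate precisely because of reflexivity. For the reverse direction, asserting that $\widehat{F}$ is continuous already presupposes (i) (so that $\widehat{F}$ lands in $\map(Y,Z)$) and supplies (ii); and (ii) is nothing but the continuity of $F$.

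The argument is essentially bookkeeping, so I expect no serious obstacle; the one point that genuinely requires care—and the only place any property of the adjacency relation is used—is reflexivity, which is what allows the single condition (ii) to encode simultaneously both that each slice $\widehat{F}(x)$ is continuous and that the assignment $x \mapsto \widehat{F}(x)$ is continuous. I would state this dependence explicitly rather than let it pass silently, since it is exactly the feature that would fail for a relation that is merely symmetric, and it is the reason the exponential correspondence preserves continuity on the nose.
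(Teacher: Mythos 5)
Your proof is correct and takes essentially the same route as the paper's: both directions reduce to unwinding \defref{def: adjacent functions} together with the coordinate-wise product adjacency of \defref{def: products}, under which continuity of $F$ and adjacency-preservation of $\widehat{F}$ become the same condition. Your explicit appeal to reflexivity to verify that each slice $\widehat{F}(x)$ actually lies in $\map(Y,Z)$ addresses a point the paper's proof leaves tacit (it is absorbed into the hypothesis that $\widehat{F}$ maps into $\map(Y,Z)$), and making it explicit is a sound refinement rather than a genuinely different argument.
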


 \begin{proof}
 Suppose that $F\colon X \times Y \to Z$ is continuous.  For $x \sim_X x'$, we must show that $\widehat{F}(x)$ and $\widehat{F}(x')$ are adjacent in  $\map(Y, Z)$.  For this, take $y \sim_Y y'$.  In the product $X \times Y$, we have  $(x, y) \sim_{X \times Y} (x', y')$ \emph{per} \defref{def: products}.  Since $F$ is continuous, we have
$\widehat{F}(x)(y) = F(x, y) \sim_Z  F(x', y') = \widehat{F}(x')(y')$.  It follows from \defref{def: adjacent functions} that $\widehat{F}$ is continuous.

Conversely, suppose that $\widehat{F}$ is continuous.  For $(x, y) \sim_{X \times Y} (x', y')$, we must show that $F(x, y) \sim_Z F(x', y')$.  From \defref{def: products}, we have that $x \sim_X x'$  and $y \sim_Y y'$.  Then $\widehat{F}(x)$ and $\widehat{F}(x')$ are adjacent in  $\map(Y, Z)$, since  $\widehat{F}$ is continuous.  Therefore, from \defref{def: adjacent functions}, we have  $F(x, y) = \widehat{F}(x)(y) \sim_Z \widehat{F}(x')(y') = F(x', y')$.  It follows that $F$ is continuous.
 \end{proof}

\begin{lemma}\label{lem: induced functions}
Suppose $f\colon X \to Y$ is a map of digital images, and $Z$ is any digital image.  The induced functions of digital function spaces
\begin{enumerate}
\item $f^* \colon \map(Y, Z) \to \map(X, Z)$, defined by $f^*(g) = g\circ f$, and
\item  $f_* \colon \map(Z, X) \to \map(Z, Y)$, defined by $f_*(g) = f\circ g$,
\end{enumerate}
are both ``continuous," in the sense that they preserve adjacency as we have defined it in  \defref{def: adjacent functions}.
\end{lemma}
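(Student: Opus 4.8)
The plan is to verify directly that each induced function preserves the adjacency relation from \defref{def: adjacent functions}, unwinding the definitions of $f^*$ and $f_*$ and using only the continuity of $f$ together with the defining condition for adjacency of functions. Both parts are symmetric in spirit: in each case I must take two adjacent elements of the source function space and show their images are adjacent in the target, where ``adjacent'' in every function space means the condition that $\phi(a) \sim \psi(a')$ whenever $a \sim a'$ in the common domain.

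For part (1), suppose $g \approx_1 g'$ in $\map(Y, Z)$; I must show $f^*(g) = g \circ f \approx_1 g' \circ f = f^*(g')$ in $\map(X, Z)$. By \defref{def: adjacent functions}, this requires that $(g\circ f)(x) \sim_Z (g'\circ f)(x')$ whenever $x \sim_X x'$. First I would fix such a pair $x \sim_X x'$. Since $f$ is a map (continuous), \secref{sec: basics} gives $f(x) \sim_Y f(x')$. Now apply the hypothesis $g \approx_1 g'$ to the adjacent pair $f(x) \sim_Y f(x')$: by the definition of adjacency in $\map(Y, Z)$, this yields $g(f(x)) \sim_Z g'(f(x'))$, which is exactly the desired conclusion. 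So part (1) follows from a single application of continuity of $f$ followed by the defining adjacency condition for $g$ and $g'$.

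For part (2), suppose $g \approx_1 g'$ in $\map(Z, X)$; I must show $f_*(g) = f\circ g \approx_1 f \circ g' = f_*(g')$ in $\map(Z, Y)$, i.e.\ that $(f\circ g)(z) \sim_Y (f\circ g')(z')$ whenever $z \sim_Z z'$. Here I would fix $z \sim_Z z'$ and first apply the hypothesis $g \approx_1 g'$ in $\map(Z, X)$: by \defref{def: adjacent functions}, $g(z) \sim_X g'(z')$. Then apply continuity of $f$ to this adjacent pair in $X$ to obtain $f(g(z)) \sim_Y f(g'(z'))$, which is precisely the required adjacency of $f_*(g)$ and $f_*(g')$. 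Thus part (2) uses the two ingredients in the opposite order from part (1): the adjacency hypothesis on $g, g'$ is applied first, and continuity of $f$ is applied second.

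Neither part presents a genuine obstacle; the proof is a routine unwinding of definitions, and the only subtlety worth flagging is the order in which the two facts (continuity of $f$ and the adjacency condition on $g,g'$) are invoked, which differs between the contravariant case (1) and the covariant case (2). If anything deserves a remark, it is that ``continuity'' for these induced maps must be interpreted via the adjacency notion of \defref{def: adjacent functions} rather than via a product adjacency, since $\map(Y,Z)$ and its kin are not themselves digital images embedded in a lattice; but this is already built into the statement of the lemma, so no additional care is needed beyond quoting the correct definition.
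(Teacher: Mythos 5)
Your proof is correct and follows exactly the paper's argument: part (1) applies continuity of $f$ to the adjacent pair $x \sim_X x'$ and then the adjacency hypothesis $g \approx_1 g'$, which is verbatim the paper's proof, and your part (2) simply spells out the step the paper dismisses as ``similar.'' No gaps; nothing further is needed.
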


\begin{proof}
(1) Suppose we have $g \approx_1 h$ and $x \sim_X x'$.  Then $f(x) \sim_Y f(x')$, since $f$ is continuous, and hence $g\circ f(x) \sim_Z h\circ f(x')$.  That is, we have
$g\circ f \approx_1 h\circ f$, so $f^*$ preserves adjacency.

The proof of (2) is similar.
\end{proof}

We use item (1) of the above very frequently in  \secref{sec: cofibrations} and the sequel. Next, we will define a digital path space as a special case of a digital function space.

For $X$ a digital image and any  $N \geq 1$, a \emph{path of length $N$ in $X$} is a continuous map $\alpha\colon I_N \to X$.  Unlike in the ordinary homotopy setting, where any path may be taken with the fixed domain $[0, 1]$, in the digital setting we must allow paths to have different domains.  The situation is perhaps comparable to taking Moore paths in a topological space.

\begin{definition}[Digital Path Space]\label{def:dig path space}
Let $X$ be a digital image.   For each $N \geq 1$, the \emph{digital path space} (of paths of length $N$ in $X$) $P_NX = \map(I_N, X)$ consists of all paths of length $N$ in $X$, together with the adjacency relation of \defref{def: adjacent functions}.
\end{definition}

So two paths  $\alpha, \beta\colon I_N \to X$ of length $N$ are adjacent if $\alpha(k)$ and $\beta(k')$  are adjacent in $X$, whenever $k \sim k' \in I_N$. Also note that, \emph{per} \defref{def: adjacent functions}, a map $X \to P_NY$ is \emph{continuous} if it is continuous in the usual digital sense, namely, if it preserves adjacency.

\begin{remark}
For our purposes in this paper, we are able to treat paths of different lengths as occupying different path spaces.   It is possible, though, that some situations demand treating paths of different lengths together, as part of a ``unified" path space that includes paths of all lengths.   It is possible to do this, if desired, e.g. in the following way. Define a path in $X$ not as we have done, but rather as a map from the natural numbers $\alpha\colon \N \to X$ that preserves adjacency in the obvious way.  This departs from our conventions, because $\N$ is not a finite digital image, but otherwise does not cause any problems.  (Note, though, that $\N$ would be a tolerance space.) Then we regard a path $\alpha$ to be  of length $N$ if $\alpha(k) = \alpha(k+1)$ for all $k \geq N$ (take the smallest such $N$ if it is desired that each path have a unique length).  It is easy to give a suitable adjacency relation on this unified path space, $P_\infty X$, say,  that allows paths of different lengths to be adjacent, or not.  Furthermore, the fixed-length path spaces we consider here may be included in this $P_\infty X$ in an obvious way  so that adjacency of paths is preserved.   In this way, our path spaces $P_NX$ may be viewed as something like ``skeleta" of $P_\infty X$.        Path spaces in the setting of tolerance spaces are described in the thesis of Poston \cite{Po71}, exactly as we have done here (in both fixed-length and this latter $P_\infty X$ sense).
\end{remark}

\begin{definition}[Evaluation Maps]\label{def:dig eval maps}
Let $Y$ be a digital image.  For each  digital path space  $P_NY$  and $t=0$ or $t = N$,  we have an evaluation map $\mathrm{ev}_t\colon P_NY \to Y$, defined by $\mathrm{ev}_t(\alpha) = \alpha(t)$, for $\alpha \in P_NY$.  We also have  the evaluation map $\pi\colon P_N Y \to Y \times Y$ given by $\pi(\gamma) = \big(\gamma(0), \gamma(N)\big)$, for each $\gamma \in P_N Y$.
\end{definition}

\begin{lemma}
 These evaluation maps are continuous, in the sense that we have $\alpha \sim_{P_NY} \beta \implies \mathrm{ev}_t(\alpha) \sim_Y \mathrm{ev}_t(\beta)$, and $\alpha \sim_{P_NY} \beta \implies \pi(\alpha) \sim_{Y\times Y} \pi(\beta)$.
\end{lemma}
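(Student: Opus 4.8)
The plan is to reduce both assertions to the defining condition for adjacency of paths in $P_N Y$, together with the single observation that the adjacency relation on any digital image is \emph{reflexive}. Recall from \defref{def: adjacent functions} that $\alpha \sim_{P_N Y} \beta$ means $\alpha(k) \sim_Y \beta(k')$ whenever $k \sim_{I_N} k'$. The crucial point I would isolate first is that each $t \in I_N$ satisfies $t \sim_{I_N} t$: from the coordinate-wise definition of adjacency in $\Z^n$, we have $|t - t| = 0 \leq 1$, so every point is adjacent to itself (this is the reflexivity noted in the introduction's discussion of tolerance spaces).

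For the evaluation maps $\mathrm{ev}_t$ with $t = 0$ or $t = N$, I would simply specialize the defining condition to $k = k' = t$. Assuming $\alpha \sim_{P_N Y} \beta$ and using $t \sim_{I_N} t$, \defref{def: adjacent functions} immediately yields $\alpha(t) \sim_Y \beta(t)$, which is exactly $\mathrm{ev}_t(\alpha) \sim_Y \mathrm{ev}_t(\beta)$.

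For the map $\pi$, I would first unwind the target adjacency using \defref{def: products}: the relation $\pi(\alpha) \sim_{Y \times Y} \pi(\beta)$ holds precisely when $\alpha(0) \sim_Y \beta(0)$ and $\alpha(N) \sim_Y \beta(N)$. Each of these two conditions is an instance of the evaluation-map case already handled (with $t = 0$ and $t = N$ respectively), so both follow from $\alpha \sim_{P_N Y} \beta$ by taking $k = k' = 0$ and then $k = k' = N$ in the path-adjacency condition.

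There is no substantial obstacle here; the only step requiring any care is the appeal to reflexivity, which is precisely what licenses setting $k = k'$ in the defining condition for $\sim_{P_N Y}$. Everything else is routine bookkeeping against \defref{def: adjacent functions} and \defref{def: products}.
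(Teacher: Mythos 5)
Your proof is correct and takes essentially the same route as the paper: the paper likewise derives continuity of $\mathrm{ev}_0$ and $\mathrm{ev}_N$ directly from the definitions and then handles $\pi$ via the decomposition $\pi(\alpha) = (\mathrm{ev}_0(\alpha), \mathrm{ev}_N(\alpha))$ together with the product adjacency of \defref{def: products}. Your explicit appeal to reflexivity of adjacency (setting $k = k' = t$) is exactly the detail hiding behind the paper's ``follows directly from the definitions,'' and it is a legitimate step given the paper's coordinate-wise definition of adjacency in $\Z^n$.
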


\begin{proof}
Continuity of $\mathrm{ev}_0$ and $\mathrm{ev}_N$ follows directly from the definitions.  Then, we may write $\pi$ as $\pi(\alpha) = (\mathrm{ev}_0(\alpha), \mathrm{ev}_N(\alpha))$, and continuity of $\pi$ follows from  that of $\mathrm{ev}_0$ and $\mathrm{ev}_N$.
\end{proof}

\begin{examples}\label{ex: induced maps}
(1)
We may ``prolong" paths, in the following way.  For any $N \geq M$, we have a map $q\colon I_N \to I_M$ given by
$$q(t) = \begin{cases} t & 0 \leq t \leq M \\ M & M \leq t \leq N. \end{cases}$$
Then we obtain an induced function of path spaces $q^*\colon P_M Y \to P_N Y$ that preserves adjacency, as in \lemref{lem: induced functions}.  For $\alpha \in P_MY$ a path of length $M$, its prolonged version $q^*(\alpha)$ is sometimes referred to as a \emph{trivial extension} of $\alpha$ in the literature (cf.~\cite[Def.4.6]{Bo99}, for instance).

(2) With $\{0\} \subseteq \Z$ a single point, we have an evident identification of $\map(\{0\}, Y)$ and $Y$, for any digital image $Y$ (we have adjacency-preserving bijections in each direction, inverse to each other).   Furthermore, for the inclusion $i \colon \{0\} \to I_N$, any $N\geq 1$ and any $Y$, the induced function
$i^* \colon \map(I_m, Y) \to \map(\{0\}, Y)$ may be identified with the evaluation map $\text{ev}_0 \colon P_NY \to Y$.

(3)  A subdivision of an interval is a longer interval, thus: $S(I_N, k) = I_{kN + k-1}$.  Then the projection $\rho_k \colon S(I_N, k) \to I_N$ induces a function
$$(\rho_k)^*\colon \map(I_N, Y) \to \map(S(I_N, k) , Y)$$
that may equally well be regarded as a function of path spaces $(\rho_k)^*\colon P_N Y \to  P_{kN + k-1} Y$.  Notice that, whilst this also takes paths in $Y$ to longer paths in $Y$, it does so in a way quite different from the trivial extensions of (1).
\end{examples}

The other evaluation maps of \defref{def:dig eval maps} may also be identified with induced functions of mapping spaces as in (2) above.  We will make  use of such identifications, as well as the other observations above, in \secref{sec: fibrations} and developments that follow it.

Now we discuss the notion of homotopy.  As function spaces, our notion of adjacency in a path space here is chosen so as to provide an exponential correspondence.  In ordinary homotopy theory, this correspondence means that a homotopy $H\colon X \times I \to Y$ may be viewed equally well as a map $\widehat{H}\colon X \to \map(I, Y)$ into the path space.
We will give the corresponding  two definitions of homotopy in the digital setting, and then show they are equivalent.

\begin{definition}[Left and Right Homotopy]\label{def: Left-Right homotopy}
Let $X$ and $Y$ be digital images, and $f, g\colon X \to Y$ (continuous) digital maps.

(A) (Cylinder object definition.)
We say that $f$ and $g$ are \emph{left homotopic} if, for some $N\geq 1$, there is a continuous map
$$H \colon X \times I_N \to Y,$$
with $H(x, 0) = f(x)$ and $H(x, N) = g(x)$.  Then $H$ is a left homotopy from $f$ to $g$.

(B) (Path object definition.)  We say that $f$ and $g$ are \emph{right homotopic} if, for some $N\geq 1$, there is a continuous map (in the sense of \defref{def:dig path space})
$$\widehat{H} \colon X  \to P_NY,$$
for which $f = \mathrm{ev}_0\circ \widehat{H}$ and $g = \mathrm{ev}_N\circ \widehat{H}$.  Then $\widehat{H}$ is a right homotopy from $f$ to $g$.
\end{definition}

\begin{remark}
Let $\pi\colon P_N Y \to Y \times Y$ be the evaluation map from \defref{def:dig eval maps}.   It is easy to see that a right homotopy from $f$ to $g$ is equivalent to a  filler $\widehat{H} \colon X  \to P_NY$ in the following commutative diagram:
$$\xymatrix{  &  P_{N} Y \ar[d]^{\pi} \\
X \ar@{.>}[ru]^-{\widehat{H} } \ar[r]_-{(f, g)} & Y\times Y.}$$
 Here, we have written $(f, g)$ for the map $(f \times g)\circ \Delta\colon X \to Y \times Y$, with $\Delta\colon X \to X \times X$ the diagonal map given by $\Delta(x) = (x, x)$.
\end{remark}

By taking adjoints,  we may pass between maps from a cylinder object and maps to a path object.  This provides a correspondence between left and right homotopies.

\begin{proposition}\label{prop: left=right homotopy}
Suppose $f, g \colon X \to Y$ are digital maps.  Then $f$ and $g$ are left homotopic if and only if they are right homotopic.
\end{proposition}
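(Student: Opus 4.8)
The plan is to deduce this immediately from the Exponential Law (\propref{prop: exponential law}), since a right homotopy is literally a continuous map into the function space $P_NY = \map(I_N, Y)$, and thus constitutes the adjoint data to which that law applies. Concretely, I would apply \propref{prop: exponential law} with the roles specialized so that the second factor is the interval $I_N$ and the target is $Y$: a left homotopy $H \colon X \times I_N \to Y$ and a right homotopy $\widehat{H} \colon X \to \map(I_N, Y) = P_NY$ are then exactly a pair of adjoint functions, related by $H(x,t) = \widehat{H}(x)(t)$. The Exponential Law tells us $H$ is continuous if and only if $\widehat{H}$ is continuous, so the continuity halves of the two notions of homotopy correspond for free, with \emph{the same} value of $N$ in each direction.

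The remaining content is simply to check that the two boundary conditions match up under this correspondence. First I would record that, by the adjoint relation, for every $x \in X$ we have $\mathrm{ev}_0\big(\widehat{H}(x)\big) = \widehat{H}(x)(0) = H(x,0)$ and likewise $\mathrm{ev}_N\big(\widehat{H}(x)\big) = \widehat{H}(x)(N) = H(x,N)$. Consequently the endpoint conditions $H(x,0) = f(x)$ and $H(x,N) = g(x)$ of a left homotopy are equivalent, respectively, to the conditions $\mathrm{ev}_0\circ\widehat{H} = f$ and $\mathrm{ev}_N\circ\widehat{H} = g$ that define a right homotopy. This makes the forward and backward implications symmetric: given a left homotopy $H$, its adjoint $\widehat{H}$ is continuous (by the Exponential Law) and satisfies the endpoint equations, hence is a right homotopy; and given a right homotopy $\widehat{H}$, the adjoint $H$ defined by $H(x,t) = \widehat{H}(x)(t)$ is continuous and has the correct endpoints, hence is a left homotopy.

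I do not expect any genuine obstacle here, precisely because the essential work has been isolated into \propref{prop: exponential law}; the only care required is bookkeeping. The one point worth stating explicitly is that the evaluation maps $\mathrm{ev}_0$ and $\mathrm{ev}_N$ of \defref{def:dig eval maps} are exactly the gadgets that extract the two endpoint conditions from the adjoint, so that no adjustment to the length $N$ is needed when passing between the cylinder-object and path-object formulations. In writing this up I would phrase the argument so that both implications are handled at once by the biconditional in the Exponential Law, rather than proving the two directions separately.
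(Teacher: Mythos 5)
Your proposal is correct and takes essentially the same route as the paper's own proof: both pass to the adjoint via $H(x,t) = \widehat{H}(x)(t)$, invoke \propref{prop: exponential law} for the continuity equivalence (with the same $N$), and verify the endpoint conditions $\mathrm{ev}_0\circ\widehat{H} = f$ and $\mathrm{ev}_N\circ\widehat{H} = g$ by direct computation. The only cosmetic difference is that you handle both implications at once through the biconditional in the exponential law, whereas the paper writes the two directions out separately.
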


\begin{proof}
Suppose that  $H \colon X \times I_N \to Y$ is a left homotopy from $f$ to $g$. We form the adjoint $\widehat{H} \colon X \to P_NY$ as $\widehat{H}(x)(k) = H(x, k)$.
Then $\widehat{H}$, with the evaluation maps $\mathrm{ev}_0, \mathrm{ev}_N\colon P_NY \to Y$, is a right homotopy from $f$ to $g$.    For we have $\mathrm{ev}_0\circ \widehat{H}(x) = H(x, 0) = f(x)$, and  $\mathrm{ev}_N\circ \widehat{H}(x) = H(x, N) = g(x)$.  The continuity of  $\widehat{H}$ follows from \propref{prop: exponential law}.

Conversely, suppose that  $\widehat{G} \colon X \to P_NY$, together with the evaluation maps $\mathrm{ev}_0, \mathrm{ev}_N\colon P_NY \to Y$, is a right homotopy from $f$ to $g$.    Then the adjoint $G$ of $\widehat{G}$ is defined as $G \colon X \times I_N \to Y$, with
$G(x, k) = \widehat{G}(x)(k)$ for $0 \leq k \leq N$.  It follows from the definitions that $G$ is a left homotopy from $f$ to $g$.
\end{proof}

\begin{definition}[Digital Homotopy]\label{def: digital homotopy (bis)}
We say that digital maps $f, g\colon X \to Y$ are homotopic if, for some $N$, there is a left homotopy $H \colon X \times I_N \to Y$, equivalently a right homotopy $\widehat{H} \colon X \to P_NY$ with the evaluation maps $\mathrm{ev}_0, \mathrm{ev}_N\colon P_NY \to Y$, from $f$ to $g$.  Notice that, from the proof of  \propref{prop: left=right homotopy}, we may use the same $N$ for left or right homotopy.  We write $f \approx g\colon X \to Y$, and think of such a homotopy as an $N$-stage deformation of $f$ into $g$.  Generally, even for given digital images  $X$ and $Y$, $N$ will depend on $f$ and $g$.
\end{definition}

\begin{remark}\label{rem: graph product homotopy}
Homotopy of digital maps has been studied by Boxer and others (see, e.g. \cite{Bo99, Bo05}).  Our definition of left homotopy above is visually the same as that of these authors.  There is a technical difference, however, in that they take the ``graph product" adjacency relation in the product $X \times I_N$, and not the adjacency relation we use (cf.~remarks after Definition 2.5 of \cite{Bo06}).  The difference is akin to requiring a function of two variables to be separately or jointly continuous.  Therefore, our homotopies must preserve more adjacencies than those of  \cite{Bo99}, and this fact has important consequences---see \propref{prop: diamond non-contractible} and the remarks above it.   Note that  various choices of adjacency relation on a product are discussed  in  \cite{Bo18}.
\end{remark}

We may extend the definition of a path in a digital image to that of a path in a function space in an obvious way.  Namely, we say that  a continuous map $\alpha\colon I_N \to \map(X, Y)$---in the sense we have defined such in \defref{def: adjacent functions}---is a \emph{path of length $N$ in $\map(X, Y)$}.  Then, by forming the adjoint of a left homotopy $H\colon X \times I_N \to Y$ as
$$\alpha_{H} \colon I_N \to \map(X, Y), $$
with $\alpha_{H}(t)(x) = H(x, t)$, we see that a homotopy may be viewed as a path in the function space.  The following explains the notation for adjacent functions that we started using above.

\begin{lemma}\label{lem: N-stage homotopy}
Maps $f, g\colon X \to Y$ are homotopic via an $N$-stage homotopy if, and only if, there is a (continuous) path of length $N$ in $\map(X, Y)$ from $f$ to $g$.  In particular, $f, g\colon X \to Y$ are homotopic via a $1$-stage homotopy if, and only if, $f$ and $g$ are adjacent in $\map(X, Y)$.  In this latter case, we write $f \approx_1 g\colon X \to Y$.
\end{lemma}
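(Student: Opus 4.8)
The plan is to read this lemma off from the adjoint (exponential) correspondence already recorded in the paragraph immediately preceding it, specialized to a cylinder of the form $X \times I_N$. First I would set up the bijection between the two kinds of data. Given an $N$-stage left homotopy $H \colon X \times I_N \to Y$ with $H(x,0) = f(x)$ and $H(x,N) = g(x)$, I would form the adjoint path $\alpha_H \colon I_N \to \map(X,Y)$ by $\alpha_H(t)(x) = H(x,t)$; conversely, from a path $\alpha \colon I_N \to \map(X,Y)$ I would define $H(x,t) = \alpha(t)(x)$. These assignments are visibly mutually inverse, so the entire content of the first assertion is the claim that $H$ is continuous if and only if $\alpha_H$ is a continuous path in the sense of \defref{def: adjacent functions}.

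To obtain this continuity equivalence I would invoke \propref{prop: exponential law}. One small bookkeeping point must be handled: the exponential law as stated adjoints in the first variable, relating $F \colon A \times B \to C$ to $\widehat{F} \colon A \to \map(B,C)$, whereas here the path variable $I_N$ should play the role of source. I would dispose of this by transposing. Since the product adjacency of \defref{def: products} is symmetric, the coordinate swap gives an isomorphism $X \times I_N \cong I_N \times X$, so $H$ is continuous if and only if the transpose $H' \colon I_N \times X \to Y$, $H'(t,x) = H(x,t)$, is continuous. Applying \propref{prop: exponential law} to $H'$ (with $A = I_N$, $B = X$, $C = Y$) shows $H'$ is continuous if and only if its adjoint $\alpha_H = \widehat{H'}$ is continuous. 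Finally I would check that the endpoints correspond: $\alpha_H(0) = f$ and $\alpha_H(N) = g$ unwind precisely to $H(\,\cdot\,,0) = f$ and $H(\,\cdot\,,N) = g$. This yields the desired bijection between $N$-stage homotopies from $f$ to $g$ and length-$N$ paths from $f$ to $g$, proving the first statement.

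For the ``in particular'' clause I would specialize to $N = 1$. A length-$1$ path $\alpha \colon I_1 \to \map(X,Y)$ is determined by its two values $\alpha(0) = f$ and $\alpha(1) = g$, so the existence of such a path from $f$ to $g$ reduces to the continuity of this single $\alpha$. Because $I_1 = \{0,1\}$ with $0 \sim 1$, continuity in the sense of \defref{def: adjacent functions} amounts to the three adjacency requirements $f \approx_1 f$, $g \approx_1 g$, and $f \approx_1 g$ (the pair $(1,0)$ adding nothing by symmetry of adjacency). The first two are automatic: by \defref{def: adjacent functions}, $f \approx_1 f$ is exactly the statement that $f(x) \sim_Y f(x')$ whenever $x \sim_X x'$, i.e. that $f$ is continuous, and similarly for $g$. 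Hence the only substantive condition is $f \approx_1 g$, giving the stated equivalence with adjacency of $f$ and $g$ in $\map(X,Y)$. I do not anticipate a genuine obstacle here; the only steps needing care are the transposition of variables required to apply \propref{prop: exponential law} in the right form, and the observation that continuous maps are self-adjacent, which is precisely what renders the reflexive adjacencies of $I_1$ harmless in the $N=1$ case.
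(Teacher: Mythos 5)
Your proposal is correct and follows essentially the same route as the paper: the paper likewise reads the lemma off the adjoint correspondence $H \leftrightarrow \alpha_H$ with $\alpha_H(t)(x) = H(x,t)$, cites \propref{prop: exponential law} for preservation of continuity, and notes the $N=1$ specialization. Your two additional checks---transposing the variables so the exponential law applies in the stated form (harmless since the product adjacency of \defref{def: products} is symmetric), and observing that the reflexive adjacencies $f \approx_1 f$, $g \approx_1 g$ are automatic because continuous maps are self-adjacent---are details the paper leaves implicit, and both are handled correctly.
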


\begin{proof}
From above, we see that an $N$-stage homotopy $H\colon X \times I_N \to Y$ corresponds to a path $\alpha_{H} \colon I_N \to \map(X, Y)$ of length $N$.  If the homotopy starts at $f$ and ends at $g$, then so does the path, and vice versa. This correspondence preserves continuity, by the exponential law  \propref{prop: exponential law}. It is worth noting the special case $N = 1$.  If $f, g\colon X \to Y$ are adjacent, as we have defined adjacent functions in \defref{def: adjacent functions}, then defining $H\colon X \times I_1 \to Y$ as $H(x, 0) = f(x)$ and $H(x, 1) = g(x)$ gives a $1$-stage homotopy from $f$ to $g$.
\end{proof}

Hence our notation $f \approx_1 g$ in this case.  In principle, we could adopt the notation $f \approx_N g$ to indicate that there is an $N$-stage homotopy from $f$ to $g$, but we have no need of this notation at this time.

\begin{lemma}\label{lem: homotopy equiv reln}
Homotopy of maps is an equivalence relation on the set of all maps $X \to Y$.
\end{lemma}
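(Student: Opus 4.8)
The plan is to verify the three defining properties of an equivalence relation—reflexivity, symmetry, and transitivity—working with the path-object reformulation of homotopy supplied by \lemref{lem: N-stage homotopy}. That lemma lets me regard a homotopy from $f$ to $g$ as a continuous path of some length $N$ in the function space $\map(X, Y)$ running from $f$ to $g$. Since adjacency in $I_N$ is simply $|i - j| \le 1$, continuity of a path $\alpha \colon I_N \to \map(X, Y)$ reduces to checking $\alpha(i) \approx_1 \alpha(i+1)$ at consecutive integers, which keeps every verification one-dimensional and makes the gluing arguments transparent.

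For reflexivity, I would observe that $f \approx_1 f$: the requirement $f(x) \sim_Y f(x')$ whenever $x \sim_X x'$ is exactly the continuity of $f$, so $f$ is adjacent to itself in $\map(X, Y)$ and the constant path of length $1$ is a homotopy from $f$ to $f$. For symmetry, given a homotopy $\alpha \colon I_N \to \map(X, Y)$ from $f$ to $g$, I would reverse it by setting $\overline{\alpha}(t) = \alpha(N - t)$; since $t \sim t'$ in $I_N$ forces $N - t \sim N - t'$, the reversed path is again continuous, and it runs from $g$ to $f$.

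The substantive step is transitivity. Given homotopies $\alpha \colon I_M \to \map(X, Y)$ from $f$ to $g$ and $\beta \colon I_N \to \map(X, Y)$ from $g$ to $h$, I would concatenate them into a single path $\gamma \colon I_{M+N} \to \map(X, Y)$ with $\gamma(t) = \alpha(t)$ for $0 \le t \le M$ and $\gamma(t) = \beta(t - M)$ for $M \le t \le M + N$. This is well-defined at the seam because $\alpha(M) = g = \beta(0)$, and it runs from $f$ to $h$; the definition of homotopy permits the enlarged length $M + N$, so no difficulty arises there.

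The one point to watch—and the only place I expect to require care—is continuity of $\gamma$ across the seam. Because adjacency in $I_{M+N}$ is $|t - t'| \le 1$, I need only verify $\gamma(t) \approx_1 \gamma(t+1)$ at consecutive indices; within each block this is inherited from the continuity of $\alpha$ and $\beta$. At the junction, $\gamma(M-1) = \alpha(M-1) \approx_1 \alpha(M) = g$ and $\gamma(M) = g = \beta(0) \approx_1 \beta(1) = \gamma(M+1)$, again directly from continuity of $\alpha$ and $\beta$; crucially, $M - 1$ and $M + 1$ are \emph{not} adjacent in $I_{M+N}$, so there is no adjacency bridging the seam to check. Were I instead to argue in the cylinder (left-homotopy) formulation, the identical issue would surface as the need to control adjacencies $(x, t) \sim_{X \times I_{M+N}} (x', t')$ crossing $t = M$, and the key observation would be that $|t - t'| \le 1$ forbids $t < M < t'$, so at least one coordinate equals $M$ and continuity reduces to that of the two given homotopies. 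Either way the verification is routine once this observation is in place.
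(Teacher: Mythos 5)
Your proposal is correct and, at its heart, identical to the paper's proof: the paper glues the two homotopies into $F\colon X \times I_{N+M} \to Y$ in the cylinder formulation and observes that $(x,t) \sim_{X \times I_{N+M}} (x',t')$ forces $|t-t'| \le 1$, so $t$ and $t'$ lie on the same side of the seam---precisely the key observation you make (and whose cylinder translation you state explicitly at the end). Your only differences are cosmetic: you work in the adjoint path-space picture via \lemref{lem: N-stage homotopy}, and you spell out reflexivity and symmetry, which the paper dismisses as immediate.
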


\begin{proof}
The usual argument (such as that of Proposition 2.8 in  \cite{Bo99}) suffices.  We just have to be careful that the technical point mentioned in \remref{rem: graph product homotopy} above does not cause problems.  Reflexivity and symmetricity are immediate.  For transitivity,  say we have a homotopy $H \colon X \times I_N \to Y$ from $f$ to $g$, and a homotopy $G\colon X \times I_M \to Y$ from $g$ to $h$.   We assemble a putative homotopy $F\colon X \times I_{N+M} \to Y$ from $f$ to $h$, defined by
$$F(x, t) = \begin{cases}  H(x, t) & 0 \leq t \leq N \\ G(x, t-N) & N \leq t \leq N+M. \end{cases}$$
We must check that $(x, t) \sim_{X \times I_{N+M}} (x' t')$ implies   $F(x, t) \sim_{Y} F(x' t')$.  But we must have $t \sim_{I_{N+M}} t'$, so  they differ by at most $1$.  Thus either we have both $t$ and $t'$ in $I_N$, or both $t$ and $t'$ in $\{N, \ldots, N+M\}$. In the first case, continuity of $H$ gives   $F(x, t) \sim_{Y} F(x' t')$.  In the second, continuity of $G$ gives  the same.
\end{proof}

\begin{definition} Let $f \colon X \to Y$ be a map of digital  images.  If there is a map $g \colon Y \to X$ such that $g\circ f \approx \text{id}_X$ and $f \circ g \approx \text{id}_Y$, then $f$ is a \emph{homotopy equivalence}, and $X$ and $Y$ are said to be \emph{homotopy equivalent}, or to have the same \emph{homotopy type.}
\end{definition}

\begin{definition}\label{def:contractible}
A digital image is \emph{contractible (to a point)} if it is homotopy equivalent to a point.  Notice that this is equivalent to saying there is some $x_0 \in X$ and some $N$,  for which we have a homotopy
$H\colon  X \times I_N \to  X$  with $H(x, 0) = x$, and $H(x, N) = x_0$.
\end{definition}

\begin{example}
Any interval $I_M$ is contractible to a point.  Indeed, the homotopy $H\colon I_M \times I_M \to I_M$ defined by
$$H(s, t) = \begin{cases} s & 0 \leq s \leq M - t\\ M-t & M - t \leq s \leq M\end{cases}$$
begins at $H(s, 0) = s$, which is the identity $\text{id}\colon I_M \to I_M$, and ends at $H(s, M) = 0$, which is the constant map at $0 \in I_M$.

More generally, if $X$ and $Y$ are contractible digital images, then their product $X \times Y$ is also contractible.  For suppose $H\colon X \times I_M \to X$ and $G\colon Y \times I_N \to Y$ are contracting homotopies, so that $H(x, 0) = x$ and $H(x, M) = x_0$, and $G(y, 0) = y$ and $G(y, N) = y_0$.  Without loss of generality, we may assume that we have $M = N$.  For if $M \not= N$, we may prolong the shorter homotopy by a constant homotopy. For instance, if we have $M < N$, then define $H'\colon X \times I_N \to X$ by
$$H'(x, t) = \begin{cases} H(x, t) & 0 \leq t \leq M \\
x_0 & M \leq t  \leq N.\end{cases}$$
This is a special case of the situation we considered when establishing transitivity in \lemref{lem: homotopy equiv reln}; this prolonged homotopy is continuous by that argument.  So assume we have $M = N$, and define $\mathcal{H}\colon X \times Y \times I_M \to X \times Y$ by $\mathcal{H}(x, y, t) = \big( H(x, t), G(y, t)\big)$.  This is easily checked to be a contracting homotopy for $X \times Y$.  So, for instance, any product of intervals, such as an $n$-cube $(I_M)^n \subseteq \Z^n$ is contractible.
\end{example}

Obviously, we are concerned to have plenty of non-contractible digital images, too.  In the continuous setting, the first such example would normally be a circle. In the digital setting, because our notion of homotopy equivalence is such a rigid one, ``circles" of different sizes are generally non-homotopy equivalent to each other.  Indeed, it is not so clear that we are able to give a good definition of a ``circle up to homotopy" that includes the kinds of digital images that one might want to be considered equivalent to a circle (see the related comments in \secref{sec: future}).  Still, it seems reasonable to consider  \emph{the Diamond} $D \subseteq \Z^2$ as a digital circle. Recall that this consist of the four points
$$D = \{ (1, 0), (0, 1), (-1, 0), (0, -1) \},$$
and is pictured in \figref{fig:D & C} of \exref{ex:basic digital images}.

In \remref{rem: graph product homotopy} above we indicated that, when defining homotopy, using the ``graph product" adjacencies in $X \times I_N$, rather than the adjacencies that we use,  has important consequences.  A fundamental difference between the two conventions appears here.  In \cite{Bo99} (following Th.3.1 there), it is shown that, using the notion of homotopy that derives from the ``graph product,"  the Diamond is contractible.  However, using the notion of homotopy as we have defined it, the contracting homotopy used in \cite{Bo99} fails to be continuous.  In fact, by contrast,  we have the following.

\begin{proposition}\label{prop: diamond non-contractible}
The Diamond $D$ is not contractible.
\end{proposition}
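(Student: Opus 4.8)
The plan is to exploit the rigidity forced by the strong-product adjacency of \defref{def: products}, which is precisely the feature distinguishing our homotopies from those discussed in \remref{rem: graph product homotopy}. First I would record the combinatorial structure of $D$: writing its four points cyclically as $0=(1,0)$, $1=(0,1)$, $2=(-1,0)$, $3=(0,-1)$, the adjacency relation makes $D$ a $4$-cycle, in which each vertex is adjacent (in the sense $\sim_D$, which includes equality) to itself and its two cyclic neighbours, but \emph{not} to the opposite vertex. Thus the closed neighbourhood $N[v] = \{w \mid w \sim_D v\}$ of each vertex consists of exactly three consecutive vertices.

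The key step would be the following rigidity lemma: \emph{the only digital map $g\colon D \to D$ with $g \approx_1 \mathrm{id}_D$ is $\mathrm{id}_D$ itself}. To prove it I would suppose $g \approx_1 \mathrm{id}_D$, i.e.\ $g \sim_{\map(D,D)} \mathrm{id}_D$. Fixing a vertex $v$, the defining condition of adjacency in the function space (\defref{def: adjacent functions}) forces $g(v) \sim_D w$ for every $w \in N[v]$; equivalently $g(v) \in \bigcap_{w \in N[v]} N[w]$. A direct computation shows this intersection of three closed neighbourhoods is the single point $\{v\}$ (for example, with $v = 0$ one gets $N[3] \cap N[0] \cap N[1] = \{2,3,0\}\cap\{3,0,1\}\cap\{0,1,2\} = \{0\}$), and by the cyclic symmetry the same holds at every vertex. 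Hence $g(v) = v$ for all $v$, so $g = \mathrm{id}_D$. Intuitively, the cross-adjacencies of the strong product leave the identity no room to move, which is exactly why the collapsing homotopy legitimate for the graph-product convention of \remref{rem: graph product homotopy} ceases to be continuous in our sense.

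With the lemma in hand the conclusion is immediate. Suppose, for contradiction, that $D$ were contractible; by \defref{def:contractible} there is a homotopy $H\colon D \times I_N \to D$ with $H(-,0) = \mathrm{id}_D$ and $H(-,N)$ the constant map at some $x_0$. Writing $h_t = H(-,t)$, the strong-product continuity of $H$ gives, for $x \sim_D x'$, that $h_t(x) \sim_D h_{t+1}(x')$; that is, $h_t \approx_1 h_{t+1}$ for each $t$ (cf.\ \lemref{lem: N-stage homotopy}). Since $h_0 = \mathrm{id}_D$, the rigidity lemma gives $h_1 = \mathrm{id}_D$, and inductively $h_t = \mathrm{id}_D$ for all $t$; in particular $h_N = \mathrm{id}_D$. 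But $h_N$ is constant and $D$ has four points, a contradiction. Therefore $D$ is not contractible.

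The only substantive step, and the main obstacle, is the rigidity lemma, whose whole content is the neighbourhood-intersection computation; everything else is bookkeeping along the stages of the homotopy. I would note that a more conceptual alternative is to assign to each self-map of $D$ a winding number (degree) in $\{-1,0,1\}$ and show it is invariant under $\approx_1$, so that $\deg(\mathrm{id}_D) = 1 \ne 0 = \deg(\mathrm{const})$; this, however, anticipates the machinery of the later winding-number development, whereas the rigidity argument is entirely self-contained and therefore preferable at this stage.
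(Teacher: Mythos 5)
Your proof is correct, and it takes a genuinely different route from the paper's. The paper also argues by contradiction from a homotopy $H$ with $H(x,0)=x$, but its key step is coarser: it locates the first stage $t$ at which the image of $H(-,t+1)$ omits a point of $D$, and a case analysis on where the point currently mapped to $(1,0)$ can move produces the contradiction --- in effect the paper proves only that each time-slice of the homotopy remains \emph{surjective}. Your rigidity lemma is sharper: the neighbourhood-intersection computation $\bigcap_{w\sim_D v} N[w]=\{v\}$ pins each time-slice to be exactly the identity. Two small points you implicitly rely on both check out against the paper's conventions: first, $\sim_D$ is indeed reflexive here (the coordinate condition $|x_i-y_i|\leq 1$ allows equality), so the constraint $g(v)\sim_D v$ from $v\sim_D v$ is legitimately among your three constraints; second, adjacency in $\map(D,D)$ as given in \defref{def: adjacent functions} is symmetric (since $\sim_Y$ is), so the induction step may use $h_{t+1}\approx_1 h_t = \mathrm{id}_D$ interchangeably with $h_t\approx_1 h_{t+1}$. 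As for what each approach buys: the paper's surjectivity-tracking argument applies verbatim to any homotopy whose initial stage is merely surjective (e.g.\ a rotation of $D$), showing a surjective stage cannot be $\approx_1$-adjacent to a non-surjective one; your lemma concerns only the identity, but its conclusion is strictly stronger --- $\mathrm{id}_D$ has no $\approx_1$-neighbour in $\map(D,D)$ other than itself, hence is homotopic to \emph{no} other self-map of $D$ whatsoever, a rigidity statement that anticipates the winding-number invariance of \propref{prop: loops in Diamond} while remaining entirely self-contained, exactly as you say. Your suggested degree-based alternative is essentially the paper's later machinery (\corref{cor: winding number} and \propref{prop: loops in Diamond}, used in \propref{prop: cat of D}), so your instinct to prefer the elementary argument at this stage matches the paper's own ordering of the material.
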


\begin{proof}
For suppose that we have $H\colon D \times I_N \to D$ that satisfies $H(x, 0) = x$ for $x \in D$, and $\{ H(x, N) \mid x \in D\}$ omits at least one point from $D$.   We assume that $H$ is continuous, and arrive at a contradiction.   There must be some first time $t$ at which we have $\{ H(x, t) \mid x \in D\} = D$, and $\{ H(x, t+1) \mid x \in D\}$ omits at least one point from $D$. Without loss of generality, suppose that   $\{ H(x, t+1) \mid x \in D\}$ does not include $(1, 0)$---the other choices are handled with an identical argument.   Suppose that, at time $t$, we have
$$H(x_1, t)  = (1, 0), \quad H(x_2, t)  = (0, 1), \quad H(x_3, t)  = (-1, 0), \quad H(x_4, t)  = (0, -1), $$
with $\{ x_1, x_2, x_3, x_4 \} = D$.  Since $(0, 1) \not\sim_D (0, -1)$, we must have $x_2 \not\sim_D x_4$, and thus $x_2 \sim_D x_1 \sim_D x_4$.  Since $H$ is continuous, we must have $H(x_1, t+1)\sim_D H(x_1, t) = (1, 0)$.  So either $H(x_1, t+1) = (0, 1)$, or $H(x_1, t+1) = (0, -1)$ (recall that $(1, 0)$ is not in the image of $H$ at time $t+1$).  If   $H(x_1, t+1)=(0, 1)$, then $(x_4, t) \sim_{D \times I_N} (x_1, t+1) \implies H(x_4, t) \sim_{D} H(x_1, t+1)$, or $(0, -1) \sim_D (0, 1)$, a contradiction.  If   $H(x_1, t+1)=(0, -1)$, then $(x_2, t) \sim_{D \times I_N} (x_1, t+1) \implies H(x_2, t) \sim_{D} H(x_1, t+1)$, or $(0, 1) \sim_D (0, -1)$, again a contradiction.
\end{proof}

\begin{remark}
We may extend the notion of left homotopy to one of homotopy of maps into a path space in an obvious way.  Namely, a  continuous map
$$H \colon X \times I_M \to P_NY$$
is a homotopy from $H(-,0)\colon X \to P_NY$ to $H(-,N)\colon X \to P_NY$.  With the adjunction used in the proof of  \propref{prop: left=right homotopy}, such a map may be viewed as a homotopy of homotopies.  Furthermore, we may also discuss continuity and homotopy for maps between path spaces: a continuous map in these contexts means an adjacency-preserving function.
\end{remark}

As a positive example of homotopy equivalent spaces in the digital setting, we offer the following.  Notice that this result, and its proof, mirror the corresponding homotopy equivalence in the topological setting.

\begin{proposition}
For any digital image $Y$ and any $N$, the evaluation map $\text{ev}_0 \colon P_NY \to Y$ is a homotopy equivalence.
\end{proposition}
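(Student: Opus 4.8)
The plan is to mirror the topological proof that evaluation-at-the-start-point $PY \to Y$ is a homotopy equivalence, whose homotopy inverse is the inclusion of $Y$ as the constant paths. First I would define the section $s \colon Y \to P_N Y$ by letting $s(y)$ be the constant path at $y$, that is, $s(y)(k) = y$ for all $k \in I_N$. This is continuous in the sense of \defref{def: adjacent functions}: if $y \sim_Y y'$ then $s(y) \sim_{P_N Y} s(y')$, since for any $k \sim_{I_N} k'$ we have $s(y)(k) = y \sim_Y y' = s(y')(k')$. Moreover $\mathrm{ev}_0 \circ s = \mathrm{id}_Y$ on the nose, because $\mathrm{ev}_0(s(y)) = s(y)(0) = y$. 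So one composite is already the identity, and the entire content of the proposition is to produce a homotopy $s \circ \mathrm{ev}_0 \approx \mathrm{id}_{P_N Y}$.

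For this homotopy I would deform each path to the constant path at its starting point by progressively ``freezing'' it from the far end. Concretely, define $\mathcal{H} \colon P_N Y \times I_N \to P_N Y$ by
$$\mathcal{H}(\alpha, t)(k) = \alpha\big(\min(k, N - t)\big).$$
At $t = 0$ this is $\alpha(\min(k,N)) = \alpha(k)$, so $\mathcal{H}(-, 0) = \mathrm{id}_{P_N Y}$; at $t = N$ it is $\alpha(\min(k, 0)) = \alpha(0)$, so $\mathcal{H}(-, N) = s \circ \mathrm{ev}_0$. For each fixed $(\alpha, t)$ the assignment $k \mapsto \alpha(\min(k, N-t))$ is a genuine path of length $N$, since $\min(\cdot, N-t)$ moves by at most $1$ and $\alpha$ is continuous. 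Granting that $\mathcal{H}$ is continuous, \defref{def: digital homotopy (bis)} then gives $\mathrm{id}_{P_N Y} \approx s \circ \mathrm{ev}_0$, and combined with $\mathrm{ev}_0 \circ s = \mathrm{id}_Y$ this exhibits $\mathrm{ev}_0$ as a homotopy equivalence.

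The main obstacle is precisely the continuity of $\mathcal{H}$, and it is here that the strong-product adjacency of \defref{def: products} (rather than the graph product) does the work. I must check that $(\alpha, t) \sim (\alpha', t')$ in $P_N Y \times I_N$ forces $\mathcal{H}(\alpha, t) \sim_{P_N Y} \mathcal{H}(\alpha', t')$; unwinding \defref{def: adjacent functions}, and using $\alpha \sim_{P_N Y} \alpha'$, this reduces to the arithmetic claim that $\min(k, N-t) \sim_{I_N} \min(k', N - t')$ whenever $|k - k'| \le 1$ and $|t - t'| \le 1$. The key lemma is that if $|a - a'| \le 1$ and $|b - b'| \le 1$ then $|\min(a,b) - \min(a',b')| \le 1$, which I would verify directly from the two estimates $\min(a',b') \ge \min(a,b) - 1$ and $\min(a',b') \le \min(a,b) + 1$. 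I would also emphasize why the ``obvious'' shift reparametrization $\mathcal{H}(\alpha,t)(k) = \alpha(\max(k - t, 0))$ fails: under a diagonal move $(k,t) \to (k+1, t-1)$ the argument $k - t$ jumps by $2$, breaking adjacency, whereas the truncation $\min(k, N-t)$ never moves by more than $1$. (Alternatively, one could verify continuity through the adjoint map $P_N Y \times I_N \times I_N \to Y$ and invoke the exponential law \propref{prop: exponential law}, but the underlying computation is the same.)
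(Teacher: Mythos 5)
Your proposal is correct and matches the paper's proof essentially verbatim: you use the same constant-path section $\sigma\colon Y \to P_NY$ with $\mathrm{ev}_0\circ\sigma = \mathrm{id}_Y$, and your freezing homotopy $\mathcal{H}(\alpha,t)(k)=\alpha\big(\min(k,N-t)\big)$ is exactly the paper's piecewise formula $H(\alpha,s)(t)=\alpha(t)$ for $0\le t\le N-s$ and $\alpha(N-s)$ for $N-s\le t\le N$. The only difference is organizational: where the paper verifies adjacency-preservation by a three-case analysis on whether $s+t$ and $s'+t'$ lie below, above, or straddle $N$, you package the identical computation into the single estimate $|\min(a,b)-\min(a',b')|\le 1$ whenever $|a-a'|\le 1$ and $|b-b'|\le 1$, a slightly cleaner route to the same conclusion.
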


\begin{proof}
Define $\sigma \colon Y \to P_NY$ as $\sigma(y) = c_y$, with $c_y\colon I_N \to Y$ the constant path at $y \in Y$. Clearly $\sigma$ preserves adjacency, and so is continuous in the appropriate sense.  Then $\sigma$ is a right inverse to $\text{ev}_0$: we have
$$\text{ev}_0 \circ \sigma = \text{id}_Y\colon Y \to P_N Y \to Y.$$
Now define a homotopy---in the sense of the above remark---$H\colon P_NY \times I_N \to P_N Y$ as
$$H(\alpha, s)(t) = \begin{cases} \alpha(t) & 0 \leq t \leq N-s\\ \alpha(N-s) & N-s \leq t \leq N,\end{cases}$$
for $\alpha \in P_NY$, $s \in I_N$, and $0 \leq t \leq N$. Obviously we have $H(\alpha, 0) = \alpha$, and  $H(\alpha, N) = c_{\alpha(0)} = \sigma\circ\text{ev}_0(\alpha)$.  So it remains to check that $H$ preserves adjacency.  To this end, suppose we have $\alpha \sim_{P_NY} \alpha'$ and $s \sim_{I_N} s'$.  We must check that $H(\alpha, s) \sim_{P_N Y} H(\alpha', s')$, which entails checking $H(\alpha, s)(t) \sim_{Y} H(\alpha', s')(t')$ whenever we have $t \sim_{I_N} t'$.
Write $I_N \times I_N = \{ (s, t) \mid 0 \leq s, t, \leq N\}$.  Our formula for $H$ means that, for $(s, t) \in I_N$ with $s + t \leq N$, we use $\alpha(t)$ to evaluate $H$, and when $(s, t) \in I_N$ with $N \leq s + t \leq 2N$, we use $\alpha(N-s)$ to evaluate $H$.   Now $s+t$ and $s' + t'$ may differ by no more than two, if we have $s \sim_{I_N} s'$ and $t \sim_{I_N} t'$.  Hence, we have three possibilities: (1) we have both $s+t$ and $s'+t'$ in $\{0, \ldots, N\}$; (2) both $s+t$ and $s'+t'$ in $\{N, \ldots, 2N\}$; or (3) $\{s+t, s'+t'\} = \{N-1, N+1\}$. In the first case, we have   $H(\alpha, s)(t) = \alpha(t) \sim_Y \alpha'(t') =  H(\alpha', s')(t')$.  In the second case, we have $H(\alpha, s)(t) = \alpha(N-s) \sim_Y \alpha'(N - s') =  H(\alpha', s')(t')$ (notice here that, since $s \sim_{I_N} s'$, we have $N - s \sim_{I_N} N - s'$).   It remains to check the third case,  in which $\{s+t, s'+t'\} = \{N-1, N+1\}$ and so both formulas are used to evaluate $H$.  Suppose that we have $s+t = N-1$ and $s'+t' = N+1$, which entails that we have $t' = t+1$ and $s' = s+1$.  Then we have $N - s' = t' -1 = t$, and so
$$H(\alpha, s)(t) = \alpha(t) \sim_Y \alpha'(t) = \alpha'(N - s') = H(\alpha', s')(t').$$
On the other hand, if we have  $s+t = N+1$ and $s'+t' = N-1$, so $t' = t-1$ and $s' = s-1$ and so $N - s = t -1 = t'$, then
$$H(\alpha, s)(t) = \alpha(N-s) \sim_Y \alpha'(N-s) = \alpha'(t') = H(\alpha', s')(t').$$
Thus $H$ preserves adjacency, as required.
\end{proof}

Generally, though, this notion of homotopy equivalence is a very rigid one and many examples of homotopy equivalent spaces from the continuous setting fail to transfer as such into the digital setting---see comments in the final section.

\section{Digital Cofibrations}\label{sec: cofibrations}

None of the material in this section and the next has appeared in the digital topology literature before.

Recall that, in the topological setting, a \emph{cofibration} $A \to X$ is a map that has the homotopy extension property.    This property may be expressed diagramatically as follows.   For any $Y$, let $PY$ denote the space of (unbased) paths in $Y$, and denote by $\mathrm{ev}_0\colon PY \to Y$ the map that evaluates a path at its initial point; thus we have $\mathrm{ev}_0(\gamma) = \gamma(0)$, for $\gamma$ a path in $Y$.  Then  $j\colon A \to X$ is a cofibration when, for any $Y$ the following  commutative diagram has a filler $\bar{H}\colon X \to PY$.  Namely, the homotopy $H\colon A \to PY$ extends to a homotopy $\bar{H}\colon X \to PY$ that begins at the map $f \colon X \to Y$:
$$\xymatrix{
A \ar[r]^-{H} \ar[d]_-{j}  & PY \ar[d]^-{\mathrm{ev}_0}\\
X \ar[r]_-{f}  \ar@{.>}[ru]^{\bar{H}} & Y   }
$$

Unfortunately, if we try to repeat this definition in the digital setting, it leads to many inclusions failing to qualify as a cofibration.  The following simple example illustrates the issue.

\begin{example}\label{ex: non-cofibration}
Corresponding to the above ingredients,  take digital images  $A = \{0\}$, $X = I_2 = \{0, 1, 2\}$, and $Y = I_3 = \{0, 1, 2, 3\}$.  Let $j \colon A \to X$ be the obvious inclusion of $0$ into the digital interval of length $2$.     Define maps
$f\colon X \to Y$ and $H\colon A  \to P_1Y = \map(I_1, Y)$ by $f(k) = k+1$ for $k = 0, 1, 2$, $H(0)(l) = 1-l$ for $l = 0, 1$.  This gives the following commutative diagram.
$$\xymatrix{
\{0\} \ar[r]^-{H} \ar[d]_-{j}  & P_1I_3 \ar[d]^-{\mathrm{ev}_0}\\
I_2 \ar[r]_-{f}  \ar@{.>}[ru]^{\bar{H}} & I_3   }
$$
We claim that there is no (digitally continuous)  filler $\bar{H}\colon I_2  \to P_1I_3$ for the diagram.  This follows because such a filler is equivalent, via adjoints, to a map
$\widehat{H}\colon I_2 \times I_1 \to I_3$ with
$$\widehat{H}(k, 0) = \bar{H}(k)(0) = f(k) = k+1 \qquad \text{and} \qquad \widehat{H}(0, l) = \bar{H}(0)(l) = H(0(l) = 1-l,$$
for $k = 0, 1, 2$ and $l = 0, 1$.
Since
$(1,1) \sim_{I_2 \times I_1} (0, 1)$ and $(1,1) \sim_{I_2 \times I_1} (2, 0)$, such (an adjoint of) a filler would need to satisfy both
$$\widehat{H}(1, 1) \sim_{I_3} \widehat{H}(0, 1) =  0  \qquad \text{and} \qquad \widehat{H}(1, 1) \sim_{I_3} \widehat{H}(2, 0) =  3.$$
But there is no element in $I_3$ adjacent to both $0$ and $3$.  Thus there is no filler.
\end{example}

\begin{remark}
A notion of cofibration (or adjunction space) in the tolerance space setting is given in \cite{Po71}.  However, as we have pointed out, repeating the continuous definition gives a notion that is too rigid to be of much practical use.  Poston gives an example similar to \exref{ex: non-cofibration}, and remarks that developing a notion of cell complexes in the tolerance setting is not likely to be of much use, because of this rigidity.  Whereas Poston sees cofibrations mainly as a way of developing cell complexes, we are interested in them here as a source of fibrations---or, certain maps that have a homotopy lifting property (see \secref{sec: fibrations}).  Furthermore, incorporating subdivision into our notion of cofibration, as we do below, is the point of departure from previous appearances of cofibration in a digital (or tolerance) setting, and it is this that allows us to develop the notion in a way that has substantial application and depth.
\end{remark}

Motivated by the desire to have (at least) the inclusion $\{0\} \to I_M$ be a ``digital cofibration," we define this notion in a way that relaxes, or makes less rigid, the idea of extending a homotopy.  The way we do this involves the notion of subdivision, from \secref{sec: basics}.   In the following, $\mathrm{ev}_0\colon P_NY \to Y$ denotes the evaluation map from \defref{def:dig eval maps} that evaluates an unbased  path at its initial point.

\begin{definition} [Digital cofibration]\label{def: digital cofibration}
An inclusion of digital images $j \colon A \to X \subseteq \Z^n$ is a \emph{cofibration} if, given a commutative diagram
$$\xymatrix{
A \ar[r]^-{H} \ar[d]_-{j}  & P_NY \ar[d]^-{\mathrm{ev}_0}\\
X \ar[r]_-{f}   & Y   }
$$
(any $N$ and any digital image $Y$),
there are subdivisions $S( X, k)$ and $S(I_N, l)$, and  a filler $\overline{H}\colon S( X, k) \to \map( S(I_N, l), Y)$ in the following commutative diagram:
$$\xymatrix{
S(A, k) \ar[d]_{S(j,k)} \ar[r]^-{\rho_k} & A \ar[r]^-{H} \ar[d]_-{j}  & P_NY \ar[d]^-{\mathrm{ev}_0} \ar[r]^-{ (\rho_l)^* } &  \map( S(I_N, l), Y) = P_{lN + l-1} Y \ar[ld]^{\mathrm{ev}_0}\\
S(X, k) \ar[r]_-{\rho_k}  \ar@{.>}[rrru]^-{\overline{H}} &X \ar[r]_-{f}   & Y   }
$$
\end{definition}

Note that, in the above diagram, the function
$$(\rho_l)^*\colon P_NY = \map(I_N, Y) \to \map( S(I_N, l), Y) = \map(I_{lN + l-1}, Y) = P_{lN + l-1} Y$$
is that induced by pre-composition with the projection $\rho_l\colon S(I_N, l) \to I_N$, as in \lemref{lem: induced functions}.
The reason for the form of this definition should become clear over the course of the next several results.

\begin{discussion}\label{dis: cofibn pushout}
In the topological setting, suppose that we have $j\colon A \to X$ the inclusion of a closed subspace.  Then the commutative diagram
$$\xymatrix{A \ar[d]_{j} \ar[r]^-{i_1} & A \times I \ar[d]^{j\times \mathrm{id}} \\
X \ar[r]_-{i_1} & A\times I \cup X \times \{0\}}$$
is a pushout, that is, given maps $H \colon A \times I \to Y$ and $f \colon X \to Y$ that agree on $A$, so that we have $H(-, 0) = f\circ j\colon A \to Y$, then there is a (unique) filler $\phi$  in  the commutative diagram
$$\xymatrix{A \ar[d]_{j} \ar[r]^-{i_1} & A \times I \ar[d]_{j\times \mathrm{id}}  \ar@/^1pc/[rdd]^{H} \\
X \ar[r]^-{i_1} \ar@/_1pc/[rrd]_{f}& A\times I \cup X \times \{0\} \ar@{..>}[rd]_{\phi}\\
 & & Y}$$
Here, the issue is a \emph{continuous} filler: there is only one candidate, namely $\phi|_{X \times \{0\}} = f$ and $\phi|_{A \times I} = H$.  Because we assume $A$ closed in $X$, these maps piece together well.    Taking $H = j \times \text{id}\colon A \times I \to X \times I$ and $f = i_1\colon X \to X \times I$ in the pushout diagram, the filler is  the inclusion
$i\colon A\times I \cup X \times \{0\} \to X \times I$.  For $j\colon A \to X$ the inclusion of a closed subspace, we have $j$ is a cofibration iff this inclusion $i$ admits a left inverse.  That is, $j$ is a cofibration iff $A\times I \cup X \times \{0\}$ is a retract of $X \times I$.  Constructing retracts of this form provides many basic examples of cofibrations.
\end{discussion}

The situation described in the above discussion does not carry over verbatim to the digital setting (see \exref{ex: non-push} below).  Rather, we have the following adaptation to the digital setting.

\begin{lemma}[Digital Pushout]\label{lem:pushout}
Let $j \colon A \to X \subseteq \Z^n$ be an inclusion of digital images.   Suppose we have  a commutative diagram as in below left (namely,  maps $H \colon A \times I_N \to Y$ and $f \colon X \to Y$ that agree on $A$).
$$\xymatrix{A \ar[d]_{j} \ar[r]^-{i_1} & A \times I_N \ar[d]^{H} \\
X \ar[r]_-{f} & Y}
\qquad
\xymatrix{A \ar[d]_{j} \ar[r]^-{i_1} & A \times S(I_N, l) \ar[d]_{\mathrm{incl.}}  \ar@/^1.5pc/[rdd]^{H\circ(\mathrm{id} \times \rho_l)} \\
X \ar[r]^-{i_1} \ar@/_1pc/[rrd]_{f}& A\times S(I_N, l) \cup X \times \{0\} \ar@{..>}[rd]_{\phi}\\
 & & Y}$$
Then, for any subdivision  $S(I_N, l)$ with  $l \geq 2$,  there is a (unique) filler $\phi$ in  the commutative diagram as in above right.
\end{lemma}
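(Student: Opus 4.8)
The plan is to observe first that the filler $\phi$ is forced, so that the only real issue is its continuity. On the subcylinder $A \times S(I_N, l)$ the filler must agree with $H\circ(\mathrm{id}\times\rho_l)$, and on $X \times \{0\}$ it must agree with $f$. These two prescriptions overlap precisely on $A \times \{0\}$, where the first gives $\phi(a,0) = H(a,\rho_l(0)) = H(a,0)$ and the second gives $\phi(a,0) = f(a)$. Since the original square commutes we have $H(a,0) = f(a)$, so the formulas agree and $\phi$ is a well-defined function, and the unique one with the stated restrictions. Hence the entire content of the lemma is that this $\phi$ is adjacency-preserving, where $A \times S(I_N, l) \cup X \times \{0\}$ carries the adjacency it inherits as a subset of the product $X \times S(I_N, l)$.

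To verify continuity I would take adjacent points $p \sim p'$ in the domain and argue by cases according to which piece each point lies in. If both $p = (a,s)$ and $p' = (a',s')$ lie in $A \times S(I_N, l)$, then $a \sim_X a'$ and $s \sim s'$; continuity of $\rho_l$ gives $\rho_l(s) \sim_{I_N} \rho_l(s')$, so $(a,\rho_l(s)) \sim (a',\rho_l(s'))$ in $A \times I_N$, and continuity of $H$ yields $\phi(p) = H(a,\rho_l(s)) \sim_Y H(a',\rho_l(s')) = \phi(p')$. If both lie in $X \times \{0\}$, say $p = (x,0)$ and $p' = (x',0)$, then $x \sim_X x'$ and continuity of $f$ gives $\phi(p) = f(x) \sim_Y f(x') = \phi(p')$.

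The remaining, and only substantive, case is the mixed one: $p = (a,s) \in A \times S(I_N, l)$ and $p' = (x',0) \in X \times \{0\}$ with $x' \in X \setminus A$. Here adjacency in the product forces $a \sim_X x'$ and $s \sim 0$ in $S(I_N, l) = I_{lN + l-1}$, and the latter means $s \in \{0,1\}$. This is exactly the point at which the hypothesis $l \geq 2$ enters: since $\rho_l(s) = \lfloor s/l \rfloor$, both $s = 0$ and $s = 1$ satisfy $\rho_l(s) = 0$ as soon as $l \geq 2$. Consequently $\phi(p) = H(a,0) = f(a)$, using commutativity of the original square again, while $\phi(p') = f(x')$; since $a \sim_X x'$ and $f$ is continuous, $\phi(p) = f(a) \sim_Y f(x') = \phi(p')$, as needed. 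I expect this mixed case to be the crux of the argument: without subdivision (that is, with $l = 1$) the neighbour $s = 1$ of the base would project to $1$ rather than to $0$, forcing the incompatible comparison $H(a,1) \sim_Y f(x')$, which has no reason to hold — precisely the failure exhibited in \exref{ex: non-cofibration}. Collecting the three cases shows that $\phi$ preserves adjacency, which completes the proof.
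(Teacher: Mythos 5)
Your proposal is correct and takes essentially the same approach as the paper: you observe that the filler is forced and reduce everything to continuity at mixed adjacencies between $A\times S(I_N,l)$ and $X\times\{0\}$, where $l\geq 2$ ensures $\rho_l(1)=0$ so that both values of $\phi$ become values of $f$ at adjacent points of $X$. Your write-up is slightly more systematic than the paper's (you spell out well-definedness on the overlap $A\times\{0\}$ and the two easy cases, which the paper dispatches in a sentence), but the crux is identical.
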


\begin{proof}
As in the discussion above, the issue here is continuity: the only candidate for a filler is  $\phi|_{X \times \{0\}} = f$ and $\phi|_{A \times S(I_N, l)} = H\circ (\text{id} \times \rho_l)$. Now, as illustrated in the example below, the only possible problem with continuity arises when we have points in $A\times S(I_N, l) - i_1(A)$ adjacent to points in $X\times \{0\} - i_1(A)$.  So consider a point $(a, q) \in A\times S(I_N, l) - i_1(A)$, so that $a \in A$ and $q \geq 1$, and a point
$(x, 0) \in X\times \{0\} - i_1(A)$.  If these are adjacent---recall that we are in $\Z^{n+1}$,  then we have $q = 1$, and $a \sim_{X} x$.  Then  $\phi(a, q) = \phi(a, 1) = H\circ(\text{id} \times \rho_l)(a, 1) = H(a, 0) = f(a)$, and $\phi(x, 0) = f(x)$.  Now $f(a)  \sim_Y f(x)$, since $a \sim_{X} x$, and so we have $\phi(a, q)  \sim_Y \phi(x,0)$: the filler is continuous.
\end{proof}

\begin{example}\label{ex: non-push}
Suppose that $D = \{ (0, \pm1), (\pm1, 0) \} \subseteq \Z^2$ is the Diamond from \exref{ex:basic digital images} and \propref{prop: diamond non-contractible}.  Let $\alpha, \beta\colon I_1 \to D$ be the digital paths given by $\alpha(0) = (1,0)$ and $\alpha(1) = (0,1)$, and $\beta(0) = (1,0)$ and $\beta(1) = (0,-1)$.  If we take $j\colon A \to X$ to be $j\colon \{0\} \to  I_1$, $H\colon A \times I_N \to Y$ to be $H\colon \{0\} \times I_1 \to D$ defined by $H(0, k) = \alpha(k)$, for $k = 0, 1$,  and $f\colon X \to Y$ to be $\beta\colon I_1 \to D$, then we have a commutative diagram as above left:
$$\xymatrix{\{0\} \ar[d]_{j} \ar[r]^-{i_1} & \{0\} \times I_1 \ar[d]^{H} \\
I_1 \ar[r]_-{f} & D.}$$
First consider a flller for the following diagram (such would exist in the continuous situation):
$$\xymatrix{\{0\} \ar[d]_{j} \ar[r]^-{i_1} & \{0\} \times I_1 \ar[d]_{j\times \mathrm{id}_{I_1}}  \ar@/^1pc/[rdd]^{H} \\
I_1 \ar[r]^-{i_1} \ar@/_1pc/[rrd]_{f}& \{0\}\times I_1 \cup I_1 \times \{0\} \ar@{..>}[rd]_{\phi}\\
 & & D}$$
The only candidate for $\phi$ must satisfy $\phi(1, 0) \sim_Y \phi(0, 1)$, since $(1, 0) \sim_{\Z^2} (0, 1)$ and $\{0\}\times I_1 \cup I_1 \times \{0\}  \subseteq \Z^2$.  But we have $\phi(1, 0) = f(1) = \beta(1) = (0, -1)$, and $\phi(0, 1) = H(0, 1) = \alpha(1) = (0, 1)$.  Since $(0, -1) \not\sim_Y (0, 1)$,  there is no filler.

On the other hand, for any $l \geq 2$, we have a filler for the diagram
$$
\xymatrix{\{0\} \ar[d]_{j} \ar[r]^-{i_1} & \{0\} \times S(I_1, l) \ar[d]_{\mathrm{incl.}}  \ar@/^1.5pc/[rdd]^{H\circ(\mathrm{id} \times \rho_l)} \\
I_1 \ar[r]^-{i_1} \ar@/_1pc/[rrd]_{f}& \{0\}\times S(I_1, l) \cup I_1 \times \{0\} \ar@{..>}[rd]_{\phi}\\
 & & D.}$$
We define $\phi$ as in \lemref{lem:pushout}, by setting $\phi(0, t) =  H(0, \rho_l(t))$, and $\phi(s, 0) =   f( s)$,  for $s \in  \{0, 1\} = I_1$ and $t \in  \{0, 1, 2, 3, \ldots, 2l-1\} = S(I_1, l)$. Now the only possible source of discontinuity in piecing together $\phi$ from $H$ and $f$, here, is that we require $\phi(1, 0) \sim_Y \phi(0, 1)$.   But we have $\phi(0, 1) = H(0, \rho_k(1)) = H(0, 0) = \alpha(0) = \beta(0)$, and
$\phi(1,0) = f(1) = \beta(1) \sim_Y \beta(0)$.
So $\phi$ is the desired filler.
\end{example}

\begin{remark}
\exref{ex: non-push} indicates a difference between the digital and the tolerance settings.  In the tolerance setting, pushouts are straightforward (both pointed and unpointed---see, e.g. \cite{Po71}).  Here, however, the fact that our digital images are always in some ambient $\Z^n$ seems to play a role in constraining, e.g., the notion of pushout.
\end{remark}

The next result  establishes a digital version of the characterization of (topological) cofibrations indicated in \disref{dis: cofibn pushout}.  For an inclusion of digital images $j \colon A \to X$, the diagram
$$\xymatrix{A \ar[d]_{j} \ar[r]^-{i_1} & A \times I_N \ar[d]^{j \times \text{id}} \\
X \ar[r]_-{i_1} & X \times I_N} $$
leads to a map $\phi_i \colon X \times \{0\} \cup A \times S(I_N, l) \to X \times I_N$, as in \lemref{lem:pushout}.  (In this case, actually, we also have an inclusion $X \times \{0\} \cup A \times I_N \to X \times I_N$, as usual, but our general framework demands that we consider $\phi_i$.)

\begin{proposition}\label{prop: retraction iff cof}
Let $j \colon A \to X \subseteq \Z^n$ be an inclusion of digital images.  The following are equivalent:
\begin{enumerate}

\item $j$ is a cofibration;

\item for each $I_N$, there are subdivisions $S(X, k)$ and $S(I_N, l)$ with $l \geq 2$, and a ``retraction" of  the above $\phi_i$
$$R\colon S(X, k) \times S(I_N, lm) \to X \times \{0\} \cup A \times S(I_N, l),$$
in the sense that the diagram
$$\xymatrix{ S(X, k) \times \{0\}  \cup S(A, k) \times S(I_N,  lm)  \ar[r]^-{\mathrm{incl.}} \ar[rd]_-{\rho_k \times \rho_m} & S(X, k) \times S(I_N, lm) \ar[d]^{R}\\
 & X \times \{0\} \cup A \times S(I_N, l) }$$
 commutes, for some further subdivision $S(I_N,  lm) \cong S(S(I_N, l), m)$ of $S(I_N, l)$.  In the diagram, $\mathrm{incl.}$ denotes the obvious inclusion map that restricts to
 $S(j, k) \times \text{id}$ on  $S(A, k) \times S(I_N,  lm)$ and to $\text{id} \times i$ on  $S(X, k) \times \{0\}$.
\end{enumerate}
\end{proposition}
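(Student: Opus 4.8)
The plan is to establish the equivalence by mirroring the topological characterization recalled in \disref{dis: cofibn pushout} --- that $j$ is a cofibration precisely when the relevant mapping-cylinder subspace is a retract --- but carrying along all the subdivision bookkeeping that \defref{def: digital cofibration} demands, with continuity handled throughout by the exponential law (\propref{prop: exponential law}) and the gluing of \lemref{lem:pushout}.

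For (2) $\Rightarrow$ (1), suppose a retraction $R\colon S(X,k)\times S(I_N, lm) \to X\times\{0\}\cup A\times S(I_N, l)$ is given. Starting from an arbitrary test square with $H\colon A \to P_NY$ and $f\colon X\to Y$, I would pass to the adjoint $\widehat H\colon A\times I_N \to Y$, which agrees with $f$ on $A$ because the square commutes. Applying \lemref{lem:pushout} with this same $l$ produces a continuous glued map $\phi\colon X\times\{0\}\cup A\times S(I_N, l)\to Y$, with $\phi(x,0)=f(x)$ and $\phi(a,t)=\widehat H(a,\rho_l(t))$. I would then form $\phi\circ R\colon S(X,k)\times S(I_N, lm)\to Y$ and let $\overline H$ be its adjoint; by \propref{prop: exponential law} this is a continuous map $S(X,k)\to \map(S(I_N, lm), Y)$, and it is the desired filler for \defref{def: digital cofibration} with subdivision parameters $k$ and $lm$. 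Verifying the two filler conditions is where the retraction identity does its work: on $S(X,k)\times\{0\}$ the identity $R\circ\mathrm{incl.}=\rho_k\times\rho_m$ gives $\phi\circ R(s,0)=\phi(\rho_k(s),0)=f(\rho_k(s))$, which is the lower-triangle condition $\mathrm{ev}_0\circ\overline H=f\circ\rho_k$; while on $S(A,k)\times S(I_N, lm)$ the same identity together with $\rho_l\circ\rho_m=\rho_{lm}$ (the projection compatibility of the iterated-subdivision isomorphism $S(S(I_N,l),m)\cong S(I_N, lm)$) gives $\phi\circ R(S(j,k)(s'),t)=\widehat H(\rho_k(s'),\rho_{lm}(t))$, which is exactly the upper condition $\overline H\circ S(j,k)=(\rho_{lm})^*\circ H\circ\rho_k$.

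For (1) $\Rightarrow$ (2), the key move is to run \defref{def: digital cofibration} on a universal test case whose target is the subspace itself, exactly as the mapping cylinder is used topologically. Fixing any $l\geq 2$, I would set $Y := X\times\{0\}\cup A\times S(I_N, l)\subseteq\Z^{n+1}$, take $f\colon X\to Y$ to be $x\mapsto (x,0)$, and take $H\colon A\to P_{N_0}Y$ (with $N_0 = lN+l-1$, so that $I_{N_0}=S(I_N,l)$) to be the fibrewise path $H(a)(t)=(a,t)$. One checks readily that $H$ and $f$ are continuous and that $\mathrm{ev}_0\circ H = f\circ j$, so this is a legitimate test square. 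The cofibration hypothesis then yields subdivisions $S(X,k)$ and $S(I_{N_0}, m)$ and a filler; taking its adjoint $G$ and using $S(I_{N_0}, m)=S(S(I_N,l),m)\cong S(I_N, lm)$ gives a continuous map $G\colon S(X,k)\times S(I_N, lm)\to Y$. I would set $R:=G$ (so that the produced parameters $k$, $m$ together with the chosen $l$ are the ones asserted in (2)). The two filler conditions translate directly into the retraction identity: the lower triangle gives $R(s,0)=(\rho_k(s),0)=(\rho_k\times\rho_m)(s,0)$, and the upper condition gives $R(S(j,k)(s'),t)=H(\rho_k(s'))(\rho_m(t))=(\rho_k(s'),\rho_m(t))=(\rho_k\times\rho_m)(s',t)$, so that $R\circ\mathrm{incl.}=\rho_k\times\rho_m$, as required.

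I expect the only real obstacle to be the subdivision bookkeeping --- in particular, recognizing that the universal target must be the glued subspace $X\times\{0\}\cup A\times S(I_N,l)$ rather than $X\times I_N$ (so that the filler lands in the subspace and restricts to the covering projection there), and matching the doubly-subdivided interval $S(S(I_N,l),m)$ with $S(I_N,lm)$ so that the length-$lm$ parameter in (2) is exactly what the cofibration property returns. Once these identifications and the compatibility $\rho_l\circ\rho_m=\rho_{lm}$ are in place, both directions follow formally from the exponential law and \lemref{lem:pushout}.
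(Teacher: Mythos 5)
Your proposal is correct and follows essentially the same route as the paper's own proof: for (1) $\Rightarrow$ (2) you run the cofibration property on the universal test square with target $Y = X \times \{0\} \cup A \times S(I_N, l)$, $f = i_1$, and the fibrewise path $H(a)(t) = (a,t)$, then adjoint the filler to obtain $R$; for (2) $\Rightarrow$ (1) you glue $f$ and the adjoint of $H$ via \lemref{lem:pushout} and precompose with $R$, exactly as the paper does. Your explicit use of the compatibility $\rho_l \circ \rho_m = \rho_{lm}$ under the identification $S(S(I_N,l),m) \cong S(I_N, lm)$ is a point the paper leaves implicit in its computation, but the argument is otherwise the same.
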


\begin{proof} (1) $\implies$ (2):
Suppose that $j \colon A \to X$ is a cofibration.  Write the adjoint of the inclusion $A \times S(I_N, l) \to X \times \{0\} \cup A \times S(I_N, l)$ as
$$H \colon A \to P_M\big(X \times \{0\} \cup A \times S(I_N, l)\big),$$
so that we have $H(a)(t') = (a, t')$ for typical points $a \in A$ and $t' \in S(I_N, l) = I_M$, with $M  = Nl+l-1$.  Then we have a commutative diagram
$$\xymatrix{
A \ar[r]^-{H} \ar[d]_-{j}  & P_M\big(X \times \{0\} \cup A \times S(I_N, l)\big) \ar[d]^-{\mathrm{ev}_0}\\
X \ar[r]_-{i_1}   & X \times \{0\} \cup A \times S(I_N, l),   }
$$
hence subdivisions $S( X, k)$ and $S(I_M, m)=I_{mM+m-1}$, and  a filler $\overline{H}$ in the following commutative diagram:
$$\xymatrix{
S(A, k) \ar[d]_{S(j,k)} \ar[r]^-{\rho_k} & A \ar[r]^-{H} \ar[d]_-{j}  & P_M\big(X \times \{0\} \cup A \times S(I_N, l)\big)  \ar[d]^-{\mathrm{ev}_0} \ar[r]^-{ (\rho_m)^* } &  P_{mM+m-1}(X \times \{0\} \cup A \times S(I_N, l)) \ar[ld]^{\mathrm{ev}_0}\\
S(X, k) \ar[r]_-{\rho_k}  \ar@{.>}[rrru]^(0.3){\overline{H}} &X \ar[r]_-{i_1}   & X \times \{0\} \cup A \times S(I_N, l)  }
$$
Note that, in the upper right entry, we have written $ \map( S(I_M, m), X \times \{0\} \cup A \times S(I_N, l)) =  \map( I_{mM+m-1}, X \times \{0\} \cup A \times S(I_N, l))$ as
$P_{mM+m-1}(X \times \{0\} \cup A \times S(I_N, l))$.
Write the adjoint of $\overline{H}$ as
$$R\colon S(X, k) \times S(I_M, m) \to X \times \{0\} \cup A \times S(I_N, l),$$
so that we have $R(x', t'') = \overline{H}(x')(t'')$, for typical points $x' \in S(X, k)$ and $t'' \in S(I_M, m)$.  We check that $R$ is a ``retraction" in  the sense given in the enunciation.  For $( a', t'') \in S(A, k) \times S(S(I_N, l), m) = S(A, k) \times S(I_N, lm)$, we have
$$
\begin{aligned}
R\circ \text{incl.}( a', t'') & =  \overline{H}\big(S(j, k)(a')\big)(t'') =  H\big(\rho_k(a')\big)\big(\rho_m(t'')\big) \\
&= \big(\rho_k(a'), \rho_m(t'')\big) = (\rho_k \times \rho_m)(a', t'').
\end{aligned}
$$
For $(x', 0) \in S(X, k) \times \{0\}$, we have
$$
R\circ \text{incl.}( x', 0) = \overline{H}(x')(0) = i_1\circ \rho_k(x') = ( \rho_k(x'), 0) = (\rho_k \times \rho_m)(x', 0).$$

(2) $\implies$ (1):  Assume that, for each $I_N$,  we have the subdivisions and a retraction---in the sense of the enunciation, and suppose we are given a commutative diagram
$$\xymatrix{
A \ar[r]^-{H} \ar[d]_-{j}  & P_NY \ar[d]^-{\mathrm{ev}_0}\\
X \ar[r]_-{f}   & Y.   }
$$
The adjoint of $H$ gives a map $\widehat{H}\colon A \times I_N \to Y$, by $\widehat{H}(a, t) = H(a)(t)$. Also, setting $f(x, 0) = f(x)$, gives a map  $f\colon X \times \{0\} \to Y$ that agrees with $\widehat{H}$ on the intersection $A \times I_N \cap X \times \{0\} = A \times \{0\}$.  So by \lemref{lem:pushout},  we have a well-defined, continuous map $\phi \colon A\times S(I_N, l) \cup X \times \{0\} \to Y$, for any $l \geq 2$. Precomposing this map with the given $R$ provides (the adjoint of) the desired filler in  \defref{def: digital cofibration}. So we define
$$\overline{H} \colon S(X, k) \to \map\big(S(I_N, lm),  Y\big)$$
by $\overline{H}(x')(t'') = \phi\circ R(x', t'')$, for typical points $x' \in S(X, k)$ and $t'' \in S(I_N, lm) = S\big( S(I_N, l), m)$.  Finally, we check that this $\overline{H}$ provides a filler in in following commutative  diagram:
$$\xymatrix{
S(A, k) \ar[d]_{S(j,k)} \ar[r]^-{\rho_k} & A \ar[r]^-{H} \ar[d]_-{j}  & P_NY \ar[d]^-{\mathrm{ev}_0} \ar[r]^-{ (\rho_{lm})^* } &  \map( S(I_N, lm), Y) \ar[ld]^{\mathrm{ev}_0}\\
S(X, k) \ar[r]_-{\rho_k}  \ar@{.>}[rrru]^-{\overline{H}} &X \ar[r]_-{f}   & Y   }
$$
For the upper left triangle, using the definitions and properties of the various maps involved, we have
$$
\begin{aligned}
\overline{H}\circ S(j,k)(a')(t'') &= \phi\circ R( S(j,k)(a'), t'') = \phi\circ R \circ(S(j,k) \times \text{id})(a', t'')\\
&= \phi\circ(\rho_k \times \rho_m)(a', t'') =  \phi\big( \rho_k(a'), \rho_m(t'') \big) \\
&= \widehat{H}\circ(\text{id}\times \rho_l)\big( \rho_k(a'), \rho_m(t'') \big)\\
&= H\big( \rho_k(a') \big) \big( \rho_{lm}(t'') \big) = ( \rho_{lm})^*\circ H \circ \rho_k(a')(t''),
\end{aligned}
$$
so this part of the diagram commutes.  For the lower right triangle, we have
$$
\begin{aligned}
\overline{H}(x')(0) &= \phi\circ R( x', 0) = \phi\circ R \circ(\text{id} \times i)(x', 0)\\
&= \phi\circ(\rho_k \times \rho_m)(x', 0) =  \phi\big( \rho_k(x'), 0\big) = f \circ \rho_k(x').
\end{aligned}
$$
So $\overline{H}$ is indeed the desired filler, and $j \colon A \to X$ is a cofibration.
\end{proof}

We are now able to prove the desired result that we discussed leading up to \defref{def: digital cofibration}.

\begin{theorem}\label{thm: well-pointed interval}
For any $M$, the inclusion $j\colon \{0\} \to I_M$ is  a cofibration.
\end{theorem}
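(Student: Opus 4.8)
The plan is to invoke \propref{prop: retraction iff cof}, which reduces being a cofibration to the existence of a retraction. So, with $X = I_M$ and $A = \{0\}$, it suffices to produce, for each $I_N$, subdivisions $S(I_M,k)$, $S(I_N,l)$ (with $l \geq 2$) and a ``retraction''
$$R\colon S(I_M, k)\times S(I_N, lm) \to I_M\times\{0\}\cup\{0\}\times S(I_N, l)$$
of the map $\phi_i$. Writing a typical domain point as $(x', t'')$, with $x'\in\{0,\ldots,Mk+k-1\}$ and $t''\in\{0,\ldots,Nlm+lm-1\}$, and writing $L$ for the ``L-shape'' target, the retraction condition $R\circ\mathrm{incl.} = \rho_k\times\rho_m$ unwinds into two boundary requirements: $R(x',0) = (\lfloor x'/k\rfloor, 0)$ along the bottom, and $R(x',t'') = (0,\lfloor t''/m\rfloor)$ on the left strip $0\le x'\le k-1$ (where $\rho_k(x')=0$). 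These agree at the corner, where both read $(0,0)$.

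First I would discretize the standard topological deformation retraction of a rectangle onto two adjacent edges, namely the one flowing each interior point in the $(-1,-1)$ direction until it meets the bottom or left edge. Its digital incarnation is the ``floored difference'' map
$$R(x', t'') = \Big(\big\lfloor \tfrac{\max(x'-t'', 0)}{k}\big\rfloor,\ \big\lfloor \tfrac{\max(t''-x', 0)}{m}\big\rfloor\Big),$$
which always has a zero coordinate and so lands in $L$, and which meets the bottom requirement automatically. The essential point for continuity is that one must floor the difference $x'-t''$ rather than difference the floors $\lfloor x'/k\rfloor-\lfloor t''/m\rfloor$: across an adjacency the quantity $x'-t''$ changes by at most $2$, so $\lfloor(x'-t'')/k\rfloor$ changes by at most $1$ once $k\ge 2$. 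I would then verify adjacency-preservation in the three cases (both points below the diagonal, both above, or on opposite sides of it), the opposite-sides case landing near the corner $(0,0)$, where the cross-adjacencies of $L$ (such as $(1,0)\sim(0,1)$) rescue continuity.

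The one discrepancy is the left boundary: on the strip $1\le x'\le k-1$ the flow map returns $(0,\lfloor(t''-x')/m\rfloor)$ rather than the required $(0,\lfloor t''/m\rfloor)$, since $S(A,k)=\{0,\ldots,k-1\}$ forces $R$ to be constant in $x'$ there. I would therefore override $R$ on the whole strip $0\le x'\le k-1$, setting $R(x',t'') = (0,\lfloor t''/m\rfloor)$, and retain the flow formula for $x'\ge k$. It then remains only to check continuity along the seam between the columns $x'=k-1$ and $x'=k$, and this is where the subdivision parameters do the work: choosing $k\ge 2$ and $m\ge k+1$ (for instance $k=2$, $m=3$, $l=2$) keeps the seam values within adjacency distance $1$, since near the bottom the images are $(0,0)$ and $(1,0)$, while higher up the two columns track $\lfloor t''/m\rfloor$ and $\lfloor(t''-k)/m\rfloor$, whose arguments differ by at most $k+1\le m$. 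The construction is uniform in $N$, so it supplies the data required for every $I_N$.

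The main obstacle is exactly this tension between adjacency-preservation and the prescribed reparametrizations. A naive radial projection, or differencing the floors, jumps by $2$ across an ``anti-diagonal'' step (e.g.\ $(4,1)$ and $(3,2)$ map to $(2,0)$ and $(0,0)$ when $k=m=2$) and so fails to be digitally continuous; the remedy is precisely to floor the difference on the subdivided grid, which is what the subdivisions built into \defref{def: digital cofibration} and \propref{prop: retraction iff cof} exist to permit. The secondary, purely bookkeeping obstacle is matching the exact left-edge reparametrization on the width-$(k-1)$ strip, dispatched by the override-and-seam argument. Once $R$ is in hand and confirmed adjacency-preserving, \propref{prop: retraction iff cof} immediately yields that $j\colon\{0\}\to I_M$ is a cofibration.
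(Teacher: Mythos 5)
Your proposal is correct and takes essentially the same route as the paper: both reduce the statement to \propref{prop: retraction iff cof} and construct the required retraction by discretizing the topological diagonal deformation retraction of a rectangle onto its left and bottom edges, restoring digital continuity by flooring differences via division by the subdivision factors (the paper phrases this as $R=(\rho_2\times\rho_2)\circ D$ for a discontinuous diagonal map $D$). The only deviation is cosmetic: the paper takes $k=l=m=2$ and uses the shifted formula $(0,\rho_2(q-p+1))$, which matches the left strip with no seam to repair, whereas you override on the strip $0\le x'\le k-1$ and compensate at the seam by enlarging $m$ to $k+1$.
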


\begin{proof}
We proceed using \propref{prop: retraction iff cof}. For this we seek, for each $I_N$,  subdivisions and a retraction (in the sense of \propref{prop: retraction iff cof})
$$R \colon S(I_M, k) \times S(I_N, lm) \to    I_M \times \{0\} \cup \{0\} \times S(I_N, l)$$
with $l \geq 2$.  It is sufficient to use $k = l = m = 2$.  We will do so, and construct a suitable
$$R \colon S(I_M, 2) \times S(I_N, 4) \to    I_M \times \{0\} \cup \{0\} \times S(I_N, 2).$$
Notice that this may be viewed as a map $R \colon I_{2M+1} \times I_{4N+3} \to I_M \times \{0\} \cup \{0\} \times I_{2N+1}$, so visually we want to retract a rectangle onto its (contracted) left and bottom edges.   In the topological setting, a retraction of $I \times I$ onto its left and bottom edges is achieved by mapping points that lie on the diagonal line $y = x + c$ either to $(0,c)$, if $c \geq 0$, or to $(-c, 0)$ if $c \leq 0$.  In  the digital setting, however, this map fails to be continuous for the same reasons on display in   \exref{ex: non-cofibration}.  Furthermore,  the technical requirement that $R$ be a ``retraction" as in  \propref{prop: retraction iff cof} means that we must adapt the approach used in the topological setting  a little.

Specifically, we will use the diagonal retraction from the continuous setting first  to retract  $I_{2M+1} \times I_{4N+3}$ onto $\{0, 1\} \times  I_{4N+3} \cup   I_{2M+1} \times \{0\}$, and then follow this with the standard projections $\rho_2 \times \rho_2$ to arrive at $I_M \times \{0\} \cup \{0\} \times S(I_N, 2)$.  Even though the first step itself is not continuous, the composition of the two steps will, in fact, be continuous.

In terms of formulas, a typical point in $S(I_M, 2) \times S(I_N, 4)$ has coordinates $(p, q)$ with $0 \leq p \leq 2M+1$ and $0 \leq q \leq 4N+3$.
Define a function
$$D \colon S(I_M, 2) \times S(I_N, 4) \to \{0, 1\} \times  S(I_N, 4) \cup   S(I_M, 2) \times \{0\}$$
as
$$D(p, q) =
\begin{cases}
(p, q) & p \leq 1 \\
(1, q - p+1) & p \geq 1 \text{ and }  q \geq p - 1 \\
(p-q, 0) & p \geq 1 \text{ and }  q \leq p - 1.
\end{cases}
$$
It is easy to check that $D$ is well-defined.  As we remarked already, however, $D$ is not continuous.  Now define
$$R = (\rho_2\times\rho_2)\circ D\colon  S(I_M, 2) \times S(I_N, 4) \to \{0\} \times  S(I_N, 2) \cup   I_M \times \{0\},$$
so that
$$
R(p, q) =
\begin{cases}
(0, \rho_2(q)) & p \leq 1 \\
(0, \rho_2(q - p+1)) & p \geq 1 \text{ and }  q \geq p - 1 \\
(\rho_2(p-q), 0) & p \geq 1 \text{ and }  q \leq p - 1.
\end{cases}$$
where $\rho_2 \colon S(I_M, 2) \to I_M$ and $\rho_2 \colon S(I_N, 4) \to S(I_N, 2)$ denote the projections from a subdivision back to the original:  $\rho_2(2i) = i$ and $\rho(2i+1) = i$, each $i \geq 0$.

Now we check that this map is continuous. For this, suppose that $(a, b)$ is a typical point in $S(I_M, 2) \times S(I_N, 4)$.  Write $S = S(I_M, 2) \times S(I_N, 4)$ and $A = I_M \times \{0\} \cup \{0\} \times S(I_N, 2)$.  Then we want to show that $R(x, y) \sim_A R(a, b)$, whenever $(x, y) \sim_S (a,b)$, that is, whenever $x = a, a\pm 1$ and $y = b, b \pm 1$.  In the following arguments, a key point is that, if $(x, y) \sim_S (a,b)$, then we have
$$b - a -2 \leq y - x \leq b - a +2.$$

\subsection{Case I: $b - a \geq 1$}  In this case, $(a, b)$ and all points in $S$ adjacent  to $(a, b)$  are mapped by $R$ to the axis $\{0\} \times S(I_N, 2)$ in $A$.

 If $b - a \geq 1$ and $a \geq 2$ (which entails $x \geq 1$ for any $(x, y)$ adjacent to $(a, b)$), then we have
 $$R(a, b) = \big(0, \rho_2(b-a+1)\big) \qquad \text{and} \qquad R(x, y) = \big(0, \rho_2(y-x+1)\big).$$
But $(x, y) \sim_S (a,b)$ entails
$$(b-a+1) -2 \leq y - x +1 \leq (b-a +1)+2,$$
and thus $\rho_2(b-a+1)-1 \leq \rho_2(y-x+1)  \leq \rho_2(b-a+1) + 1$.  It follows that, for $b-a \geq 1$ and  $a \geq 2$, we have $R(x, y) \sim_A R(a, b)$ whenever $(x, y) \sim_S (a,b)$.

 If $b - a \geq 1$ and $a \leq 1$ (which allows $x=2$ only if $a=1$, and otherwise entails $x \leq 1$),  then we have
 $$R(a, b) = \big(0, \rho_2(b)\big) \qquad \text{and} \qquad R(x, y) = \begin{cases} \big(0, \rho_2(y-x+1)\big) & \text{ if } a=1 \text{ and } x = 2\\
 \big(0, \rho_2(y)\big) & x \leq 1.\end{cases} $$
Here, then, we have either $R(x, y) = \big(0, \rho_2(y-1)\big)$ or  $R(x, y) = \big(0, \rho_2(y)\big)$.  But $(x, y) \sim_S (a,b)$ entails $b-1 \leq y \leq b+1$ and so $b-2 \leq y-1 \leq b$, and it follows that we have $\rho_2(b) - 1 \leq \rho_2(y-1) \leq \rho_2(y) \leq \rho_2(b) + 1$.  Here also  we have $R(x, y) \sim_A R(a, b)$.

\subsection{Case II: $-1 \leq b - a \leq 0$}  In this case, we have $R(a, b) = (0, 0)$.

For $(x, y) \sim_S (a,b)$ and $y-x \geq b-a$, we have $-1 \leq y - x \leq 2$.  Possible values for such $R(x, y)$ are $\{  \big(0, \rho_2(3)\big), \big(0, \rho_2(2)\big) , \big(0, \rho_2(1)\big), \big(0, \rho_2(0)\big), \big(\rho_2(1), 0\big) \} = \{ (0, 1), (0, 0), (1, 0) \}$. All these points are adjacent to  $R(a, b) = (0, 0)$ On the other hand, for $(x, y) \sim_S (a,b)$ and $y-x < b-a$, we have $-3 \leq y - x \leq -1$.   Possible values for such $R(x, y)$ are $\{   \big(\rho_2(1), 0\big),   \big(\rho_2(2), 0\big),   \big(\rho_2(3), 0\big)   \} = \{ (0, 0), (1, 0) \}$ and again all these points are adjacent to  $R(a, b) = (0, 0)$.

\subsection{Case III: $b - a \leq -2$}  In this case, $(a, b)$ and all points in $S$ adjacent  to $(a, b)$  are mapped by $R$ to the axis $I_M \times \{0\}$ in $A$.

Generally--if $b - a \leq -3$ or if $b-a = -2$ and $(x, y) \not= (a-1, b+1)$, in this case we have $R(a, b)$ and $R(x, y)$ given by
$$R(a, b) = ( \rho_2(a-b), 0) \qquad \text{and} \qquad R(x, y) = ( \rho_2(x-y), 0).$$
For $(x, y) \sim_S (a,b)$, we have
$$(b-a) -2 \leq y - x \leq (b-a)+2,$$
and thus $\rho_2(a-b)-1 \leq \rho_2(x-y)  \leq \rho_2(a-b) + 1$.  For such points, then, we have $R(x, y) \sim_A R(a, b)$.  If  $b-a = -2$, then we have $R(a, b) = (\rho_2(2), 0) = (1, 0)$.  But with $b-a = -2$,  the single adjacent point $(x, y) = (a-1, b+1)$ has (exceptionally, for Case III) $R(x, y) = \big(0, \rho_2(y-x+1)\big) = \big(0, \rho_2(1)\big) = (0,0)$. The remaining points adjacent to $(a, b)$ have $R(x, y) = (\rho_2(x-y), 0)$, and satisfy
$$(b-a) -2 \leq y - x \leq (b-a)+1,$$
hence $1 \leq x-y \leq 4$, and finally $0 \leq \rho_2(x-y) \leq 2$.  Here too, we have $R(x, y) \sim_A R(a, b)$.    This completes Case III, and with it we have shown that  $R$ is continuous.

It remains to observe that our $R\colon S(I_M, 2) \times S(I_N, 4) \to    I_M \times \{0\} \cup \{0\} \times S(I_N, 2)$ qualifies as a ``retraction" in  the sense given in \propref{prop: retraction iff cof}.  But this is immediate, since, from the definition of the function $D$ given above, we see  that $D$ fixes $S(\{0\}, 2) \times S(I_N, 4) \cup S(I_M, 2) \times \{0\} \subseteq S(I_M, 2) \times S(I_N, 4)$, and so $R$ restricts to $\rho_2 \times \rho_2$ here, as is required.  (Note that $S(\{0\}, 2) = \{0, 1\} \subseteq S(I_M, 2)$.)
\end{proof}

A  reflection on the details of the proof of \thmref{thm: well-pointed interval} together with  \exref{ex: non-cofibration} will reveal that \defref{def: digital cofibration} abstracts exactly the kind of ``homotopy extension property" that an inclusion $j\colon \{0\} \to I_M$ possesses, in the digital setting. Namely, we must allow for a subdivision of the domain as well as longer paths in the range, before a given homotopy may be extended.

Some of the basic properties of cofibrations carry over from the topological to the digital setting.  For instance, we have the following consequence of \propref{prop: retraction iff cof}.

\begin{lemma}\label{lem: 1 x i cofibration}
If an inclusion $j\colon A \to X$ of digital images is a cofibration, then so is the inclusion $\mathrm{id}\times j \colon Z \times A \to Z \times X$ for any digital image $Z$.
\end{lemma}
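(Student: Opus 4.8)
The plan is to prove this by reducing to the retraction characterization of cofibrations established in \propref{prop: retraction iff cof}, and then ``multiplying through'' by $Z$. Since $j \colon A \to X$ is a cofibration, part (2) of \propref{prop: retraction iff cof} gives us, for each $I_N$, subdivisions $S(X,k)$ and $S(I_N, l)$ (with $l \geq 2$), and a retraction $R \colon S(X,k)\times S(I_N, lm)\to X\times\{0\}\cup A\times S(I_N, l)$ covering $\rho_k\times\rho_m$. The goal is to manufacture, from this data, a corresponding retraction for the inclusion $\mathrm{id}\times j\colon Z\times A\to Z\times X$, and then invoke the same proposition in the reverse direction.

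First I would note that subdivision commutes with products, via the isomorphism $S(Z\times X, k)\cong S(Z,k)\times S(X,k)$ recorded in \secref{sec: basics}, and that the canonical projection $\rho_k$ on a product is the product of the projections $\rho_k\times\rho_k$. This lets me identify $S(Z\times X, k)$ with $S(Z,k)\times S(X,k)$, and similarly $S\big((Z\times X), k\big)$ for the inclusion $\mathrm{id}\times j$. The key construction would then be to define a candidate retraction
$$\widetilde{R}\colon S(Z,k)\times S(X,k)\times S(I_N, lm)\to (Z\times X)\times\{0\}\cup (Z\times A)\times S(I_N, l)$$
essentially as $\mathrm{id}_{S(Z,k)}$ on the $Z$-coordinate combined with $R$ on the $S(X,k)\times S(I_N, lm)$-coordinates. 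Concretely, for a point $(z', x', t'')$ I would set $\widetilde{R}(z', x', t'') = \big(\rho_k(z'), R(x', t'')\big)$, reinterpreted so that the $Z$-slot carries $\rho_k(z')\in Z$ while $R(x',t'')$ lands in $X\times\{0\}\cup A\times S(I_N, l)$; the $\{0\}$-versus-$S(I_N, l)$ bookkeeping is inherited from $R$.

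I would then verify two things. The easier is the ``retraction'' compatibility: $\widetilde{R}$ must restrict to $(\rho_k\times\rho_k)\times\rho_m$ on the appropriate sub-image $S(Z\times X, k)\times\{0\}\cup S(Z\times A, k)\times S(I_N, lm)$, which follows coordinate-wise from the fact that $R$ does the analogous thing and that the $Z$-coordinate is handled by $\rho_k$. The step requiring genuine care---and the \emph{main obstacle}---is continuity of $\widetilde{R}$. Adjacency in the product $S(Z,k)\times S(X,k)\times S(I_N, lm)$ means separate adjacency in each factor (\defref{def: products}), so adjacent points $(z_1', x_1', t_1'')\sim(z_2', x_2', t_2'')$ give $z_1'\sim_{S(Z,k)} z_2'$ together with $(x_1', t_1'')\sim (x_2', t_2'')$ in the source of $R$. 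Continuity of $\rho_k$ handles the $Z$-coordinate, and continuity of $R$ handles the remaining coordinates, and because adjacency in the target product is again coordinatewise, these combine to give $\widetilde{R}(z_1', x_1', t_1'')\sim \widetilde{R}(z_2', x_2', t_2'')$. The only subtlety I would watch for is the same ambient-lattice phenomenon exploited in \lemref{lem:pushout} and \thmref{thm: well-pointed interval}: one must confirm that the splicing between the $X\times\{0\}$ part and the $A\times S(I_N, l)$ part of the target remains continuous after adjoining the passive $Z$-coordinate. But since the $Z$-coordinate plays no role in deciding which branch of $R$ applies, no new boundary adjacencies are created, and the continuity of $R$ transfers directly. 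Having produced $\widetilde{R}$, \propref{prop: retraction iff cof} then certifies that $\mathrm{id}\times j$ is a cofibration.
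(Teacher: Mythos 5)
Your proof is correct and takes essentially the same route as the paper's: the paper likewise reduces to \propref{prop: retraction iff cof} and observes that $\rho_k \times R$ (your $\widetilde{R}$, up to the ordering of coordinates) is the required retraction for $\mathrm{id}\times j$. The coordinatewise continuity and compatibility checks you spell out are exactly what the paper leaves implicit in asserting that $\rho_k \times R$ ``is a retraction in the same sense.''
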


\begin{proof}
Since $j\colon A \to X$ is a cofibration, \propref{prop: retraction iff cof} gives, for each $I_N$, subdivisions  $S(X, k)$ and $S(I_N, l)$ with $l \geq 2$, and a ``retraction"
$$R\colon S(X, k) \times S(I_N, lm) \to X \times \{0\} \cup A \times S(I_N, l)$$
as in that statement.  But then
$$\rho_k \times R \colon S(Z, k) \times S(X, k) \times S(I_N, lm) \to Z \times X \times \{0\} \cup Z \times A \times S(I_N, l)$$
is a retraction in the same sense, corresponding to the inclusion $\text{id}\times j\colon Z \times A \to Z \times X$.  Hence, again by \propref{prop: retraction iff cof}, this map is also a cofibration.
\end{proof}

For example, this result, combined with \thmref{thm: well-pointed interval}, implies that the inclusion $j\colon I^{n-1}  \to I^{n}$ of a face of the $n$-cube, for any $n$, is a cofibration.

On the other hand, not all properties of cofibrations carry over.  For example, the usual (and easy) argument that shows a composition of cofibrations is again a cofibration in the topological setting breaks down here.  We are unsure whether or not, according to our definition, a composition of cofibrations is always a cofibration.

We establish another basic example of a cofibration.  As we will see in the next section, this example and the previous one lead to important examples of what might be called fibrations in  the digital setting.

\begin{theorem}\label{thm: endpoint cofibration}
For any $M$, the inclusion $j\colon \{0, M\} \to I_M$ is  a  cofibration.
\end{theorem}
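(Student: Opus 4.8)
The plan is to prove \thmref{thm: endpoint cofibration} by the same strategy that worked for \thmref{thm: well-pointed interval}: reduce to constructing an explicit retraction via \propref{prop: retraction iff cof}, taking $A = \{0, M\}$ and $X = I_M$. For each $I_N$, I would seek subdivisions $S(I_M, k)$ and $S(I_N, l)$ with $l \geq 2$, together with a retraction
$$R\colon S(I_M, k) \times S(I_N, lm) \to I_M \times \{0\} \cup \{0, M\} \times S(I_N, l),$$
where now the target is the rectangle $S(I_M, k) \times S(I_N, lm)$ being retracted onto its bottom edge \emph{together with both} the left and right vertical edges (after the appropriate collapsing by $\rho_k \times \rho_m$). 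This is the natural analogue of the single-endpoint case, where we retracted onto the bottom and left edges only.

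The key idea is that the geometry near the left edge is exactly what was handled in the proof of \thmref{thm: well-pointed interval}, so I would like to reuse that work and build a symmetric copy near the right edge. Concretely, I would again take $k = l = m = 2$ and split the rectangle $S(I_M, 2) \times S(I_N, 4) = I_{2M+1} \times I_{4N+3}$ into a left region and a right region by comparing the horizontal coordinate $p$ to the midpoint $M$ (roughly, $p \leq M$ versus $p \geq M$, with the boundary handled by continuity). On the left region I would use the diagonal-retraction formula $D$ from the previous proof verbatim, sending points on lines $q = p + c$ toward the left edge or the bottom edge; on the right region I would use the mirror-image formula, sending points toward the right edge $\{2M, 2M+1\} \times S(I_N, 4)$ or the bottom edge. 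Composing with $\rho_2 \times \rho_2$ then lands in $I_M \times \{0\} \cup \{0, M\} \times S(I_N, 2)$, and I would verify the retraction property by noting (as before) that $D$ fixes $S(\{0, M\}, 2) \times S(I_N, 4) \cup S(I_M, 2) \times \{0\}$, so $R$ restricts to $\rho_2 \times \rho_2$ there.

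The main obstacle I anticipate is continuity \emph{across the seam} where the left and right regions meet, near the vertical line $p = M$ (equivalently $p \in \{2M, 2M+1\}$ in the subdivided coordinates). Away from this seam, continuity in each region follows by the identical case analysis (Cases I, II, III by the sign of $b - a$) already carried out in \thmref{thm: well-pointed interval}. At the seam, a point $(a, b)$ in the left region may be adjacent to a point $(x, y)$ in the right region, and I must check that their $R$-images remain adjacent in $A = I_M \times \{0\} \cup \{0, M\} \times S(I_N, 2)$. The delicate subcase is when both points are high up (large $q$), so that the left point maps toward the left edge $\{0\} \times S(I_N, 2)$ while the right point maps toward the right edge $\{M\} \times S(I_N, 2)$ — these two edges are \emph{not} adjacent in $A$ unless the points are close to the bottom. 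I expect this is precisely why a subdivision in the $p$-direction (the factor $k = 2$, giving the pair of columns $\{2M, 2M+1\}$ straddling the midpoint) is needed: it provides a one-column buffer so that adjacent points straddling $p = M$ are in fact forced down toward the common bottom edge, where the left and right descriptions agree. I would therefore choose the seam so that any adjacency crossing it lands in the region governed by the bottom-edge formula $(\rho_2(p-q), 0)$ (from both sides), making the two formulas literally agree on overlapping inputs. Verifying that this buffering genuinely resolves all cross-seam adjacencies — and, if $M$ is small (e.g. $M = 1$ or $M = 2$), that the left and right regions do not overlap in a way that creates inconsistency — is the crux of the argument and the step I would write out most carefully.
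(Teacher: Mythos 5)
You have the right template (reduce to \propref{prop: retraction iff cof} and reuse the diagonal retraction from \thmref{thm: well-pointed interval} near the left edge, mirrored near the right edge), and you correctly locate the danger at the seam --- but your proposed fix fails, and the failure is structural, not a boundary technicality. With $k = l = m = 2$ fixed, the rectangle is $I_{2M+1} \times I_{4N+3}$, and since the cofibration condition quantifies over \emph{all} $N$, this rectangle becomes far taller than it is wide once $4N+3 > 2M+1$. For any choice of seam column $c$ near the middle, take $q \geq 2M+2$ and the adjacent pair $(c,q) \sim (c+1,q)$. The bottom-edge clause of the diagonal formula, $q \leq p-1$, can never fire at such heights (every $p \leq 2M+1$), so the left formula sends $(c,q)$ to the left edge at positive height $\bigl(0, \rho_2(q-c+1)\bigr)$, and the mirrored formula sends $(c+1,q)$ to $(M, h')$ with $h' \geq 1$. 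For $M \geq 2$ these images are neither equal nor adjacent in $I_M \times \{0\} \cup \{0,M\} \times S(I_N,2)$, so $R$ is discontinuous across the seam \emph{wherever} you place it. The ``one-column buffer'' is irrelevant: strong-product adjacency couples neighboring columns at every height, and your plan to route all cross-seam adjacencies through the bottom-edge formula is unachievable when the height exceeds the width. (There is also a coordinate slip: in $S(I_M,2)=I_{2M+1}$ the midpoint columns are $\{M, M+1\}$; the pair $\{2M, 2M+1\}$ is the right edge $S(\{M\},2)$, not a midline buffer.)

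The missing idea --- and what the paper's proof does --- is to let the horizontal subdivision factor depend on $N$: subdivide $I_M$ by $k = 2^p$ with $p$ chosen so that $4N+3 \leq K$, where $S(I_M, 2^p) = I_{4K+3}$ and $K = M2^{p-2}+2^{p-2}-1$. This makes the rectangle at least four times as wide as it is tall, so each diagonal fan emanating from a top corner terminates on the bottom edge well before the middle; only then does your instinct (cross-seam adjacencies governed by bottom-edge formulas on both sides) become realizable. Even so, the two mirrored diagonal formulas alone do not cover the rectangle: on the upper-middle region (the paper's triangle $T_2$) one needs a third formula $R_K(i,j) = (i-K, 0)$ pushing points horizontally down to the bottom edge, arranged to agree with the diagonal formula along $i+j = 2K$ (where $\rho_2(i-j) = i-K$), with the middle column sent constantly to $(K,0)$ so the two halves glue continuously; finally one post-composes with $\rho_{2^{p-1}}$ to land in $I_M \times \{0\} \cup \{0,M\} \times S(I_N,2)$ while preserving the retraction condition of \propref{prop: retraction iff cof}. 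So your outline is salvageable, but not with fixed $k=2$: the growth of $k$ with $N$, together with the extra horizontal-push formula in the upper-middle region, is the crux of the argument.
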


\begin{proof}
As in \thmref{thm: well-pointed interval}, we will apply \propref{prop: retraction iff cof}. For this we seek, for each $I_N$,  subdivisions and a retraction (in the sense of \propref{prop: retraction iff cof})
$$R \colon S(I_M, k) \times S(I_N, lm) \to    I_M \times \{0\} \cup \{0, M\} \times S(I_N, l)$$
with $l \geq 2$.  It is sufficient to use $l = m = 2$, but as we will see, we will generally need to allow for a larger $k$.  We will construct a suitable
$$R \colon S(I_M, k) \times S(I_N, 4) \to    I_M \times \{0\} \cup \{0, M\} \times S(I_N, 2).$$
Notice that such a map may be viewed as a map $R \colon I_{kM+k-1} \times I_{4N+3} \to I_M \times \{0\} \cup \{0, 1\} \times I_{2N+1}$, so visually we want to retract a rectangle onto its (contracted) left, bottom, and right edges.   In the topological setting, a retraction of $I \times I$ onto its left, bottom, and right edges is achieved by centrally projecting from a point such as $(0.5, 1.5)$.  In the digital setting, we may use an analogous approach, but we need to adapt considerably to ensure continuity and also that the  technical requirement of  \propref{prop: retraction iff cof} is satisfied.

As a first step, consider a rectangle $I_{4K+3} \times I_K$ for some $K \geq 1$ (typically $K$ will be much larger than $1$).  We begin by describing a continuous map
$$R_K \colon I_{4K+3} \times I_K \to I_{2K+1} \times \{0\} \cup \{0, 1\} \times I_{\rho_2(K)}$$
where, as usual, $\rho_2(K) = \lfloor K/2 \rfloor$.  We divide the rectangle $I_{4K+3} \times I_K$ into symmetric left-hand and right-hand halves: $I_{2K+1} \times I_K$ and $[2K+2, 4K+3] \times I_K$.  We will describe $R_K$ on the left-hand half, and check that it is continuous there, and then use symmetry to conclude the same for the right-hand half, and hence the whole rectangle.  To this end, divide the left-hand half $I_{2K+1} \times I_K$ into a lower-left trapezoid ($T_1$), an upper-right triangle ($T_2$), and a vertical interval, as follows:
$$I_{2K+1} \times I_K  = T_1 \cup T_2 \cup \{ (2K+1, j) \mid 0 \leq j \leq K \},$$
with
$$\begin{aligned}
T_1 & = \{ (i, j) \in I_{2K+1} \times I_K \mid 0 \leq j \leq K \text{ and } 0 \leq i \leq 2K - j\}, \\
T_2 & = \{ (i, j) \in I_{2K+1} \times I_K \mid 1 \leq j \leq K \text{ and } 2K - j < i \leq 2K\}.
\end{aligned}
$$
Now define $R_K$ on $T_1$ using the same formulas we used in the proof of \thmref{thm: well-pointed interval}, namely, for $0 \leq j \leq K$, define
$$
R_K(i, j) =
\begin{cases}
(0, \rho_2(j)) & i \leq 1 \\
(0, \rho_2(j - i+1)) & 1 \leq  i \leq j + 1 \\
(\rho_2(i-j), 0) &   j + 1 \leq i \leq 2K - j.
\end{cases}$$
On the interval $\{ (2K+1, j) \mid 0 \leq j \leq K \}$, define $R_K$ as
$$R_K(2K+1, j) = (\rho_2(2K+1), 0) = (K, 0),$$
for each $j = 0, \ldots, K$.  Finally,  on the triangle $T_2$, define
$$R_K(i, j) = (i-K, 0),$$
for each $i = j+1, \ldots, 2K$.

We check that this gives a continuous map.  Consider a typical point $(a, b) \in  I_{2K+1} \times I_K$.  We must show that $R_K(x, y) \sim R_K(a, b)$ for each $(x, y)$ with $(x, y) \sim (a, b)$.  If $a+b \leq 2K-2$, then $(a, b)$ and all points adjacent to $(a,b)$ are in $T_1$, and from the proof of  \thmref{thm: well-pointed interval} we know that the formulas used here to define $R_K$ give a continuous map.  Also, if $a+b \geq 2K+2$ and $a \leq 2K-1$, then $(a, b)$ and all points adjacent to $(a,b)$ are in $T_2$.   Here, it is clear that $R_K$ preserves adjacency, since if $(x, y) \sim (a, b)$, then $x \sim a$ and hence $R_K(x, y) = x-K \sim a - K = R_K(a, b)$.  If $a+b \geq 2K+2$ and $a = 2K$, then the previous remark  plus the fact that $R_K(a+1, b) = R_K(a+1, b\pm 1) = (K, 0) =  R_K(a, b)$, shows  that $R_K(x, y) \sim R_K(a, b)$ when $(x, y) \sim (a, b)$.

 If $(a, b)$ is such that $2K-1 \leq a+b \leq 2K+1$, then we have points $(x, y)$ adjacent to $(a, b)$  in both $T_1$ and $T_2$.  Suppose that we have $a+b = 2K$, so that $(a, b) \in T_1$.  Furthermore, suppose that $K+1 \leq a \leq 2K-1$.  For points adjacent to such an $(a, b)$ and in $T_1$, adjacency is preserved by $R_K$, as we have already observed.  The only points adjacent to such an $(a, b)$ and not in $T_1$ are the three points $(a, b+1)$, $(a+1, b)$, and $(a+1, b+1)$.  But for such an $(a, b)$ we have
$$R_K(a, b) = (\rho_2(a-b), 0) =  (\rho_2\big(a-(2K - a)\big), 0) =    (\rho_2\big(2(a-K)\big), 0) = (a-K, 0),$$
whilst for the three adjacent points in $T_1$ we have
$$R_K(a, b+1) =  (a-K, 0), \quad R_K(a+1, b) =  (a+1-K, 0), \quad R_K(a+1, b+1) =  (a+1-K, 0).$$
Since $(a-K, 0) \sim (a+1-K, 0)$ in $I_{2K+1}$,  it follows that $R_K$ preserves adjacency when  $a+b = 2K$ and $K+1 \leq a \leq 2K-1$.  When  $a+b = 2K$ and $a$ equals either $K$ or $2K$, and also when $a+b$ equals either $2K-1$ or $2K+1$, a minor variation on this argument shows that $R_K$ preserves adjacency in all these cases, too.  We omit these details.

Thus far, we have argued that, for $(a, b) \in I_{2K} \times I_K$, we have $R_K(x, y) \sim R_K(a, b)$ whenever $(x, y) \sim (a, b)$.   Recall that we have defined $R_K(2K+1, j) = (K, 0) = R_K(2K, j)$, for each $j = 0, \ldots, K$.  We will extend the definition of $R_K$ to $[2K+2, 4K+3] \times I_K$ in such a way that we also have $R_K(2K+2, j) = (K, 0)$, for each $j = 0, \ldots, K$.   When that is done, clearly we will have $R_K(x, y) \sim R_K(2K+1, b)$ whenever $(x, y) \sim (2K+1, b)$.

We extend $R_K$ to $[2K+2, 4K+3] \times I_K$ by first reflecting $[2K+2, 4K+3] \times I_K$ in the vertical line $y = 2K + 1.5$, applying $R_K$ as we have defined it on  $I_{2K+1} \times I_K$ (which contracts $I_{2K+1} \times I_K$ to the left and bottom edges of $I_K \times I_{\rho_2(K)}$) and then reflecting back in the vertical line $y = \rho_2(K) + 0.5$.  The reflections obviously preserve adjacency, and so this gives a map that is continuous at least on $[2K+3, 4K+3] \times I_K$.  Notice  that this definition gives  $R_K(2K+2, j) = (K, 0)$, for each $j = 0, \ldots, K$, and so the previous remarks show that we have defined a continuous map
$$R_K \colon I_{4K+3} \times I_K \to I_{2K+1} \times \{0\} \cup \{0, 1\} \times I_{\rho_2(K)}$$
as desired.

Next, we may restrict this map to any rectangle that is just as wide, but not so tall.  That is, suppose we have a $K'$ with $K' \leq K$.  Then the $R_K$ we just defined restricts to give a continuous map
$$R_K \colon I_{4K+3} \times I_K' \to I_{2K+1} \times \{0\} \cup \{0, 1\} \times I_{\rho_2(K')}.$$
In particular, if $K' = 4N+3$ for some $N$, with $4N+3 \leq K$, then we have a restriction of $R_K$ to a continuous map
$$R_K \colon I_{4K+3} \times I_{4N+3} \to I_{2K+1} \times \{0\} \cup \{0, 1\} \times I_{\rho_2(4N+3)}.$$
Furthermore, we may identify $I_{4K+3} = S(I_{2K+1}, 2)$, $I_{4N+3} = S(I_{N}, 4)$, and $I_{\rho_2(4N+3)} = I_{2N+1} = S(I_N, 2)$.  With these identifications, then, we have a continuous map
$$R_K \colon S(I_{2K+1}, 2) \times S(I_{N}, 4) \to I_{2K+1} \times \{0\} \cup \{0, 1\} \times S(I_N, 2).$$
A review of the way in which we defined $R_K$  reveals that, when restricted to   $S(I_{2K+1}, 2) \times \{0\} \cup S( \{0, 2K+1\}, 2) \times S(I_{N}, 4)$, we have
$$R_K = \text{id}\times \rho_2 = \rho_2 \times \rho_2\colon S(I_{2K+1}, 2) \times \{0\} \to I_{2K+1},$$
$$R_K = \rho_2 \times \rho_2\colon \{0, 1\} \times S(I_{N}, 4)  \to \{0\} \times  S(I_N, 2), $$
and
$$R_K = \rho_2 \times \rho_2\colon \{4K+2, 4K+3\} \times S(I_{N}, 4) \to \{2K+1\}  \times  S(I_N, 2).$$
Note that, in the above expressions, we have $S( \{0, 2K+1\}, 2) = \{0, 1\} \cup \{4K+2, 4K+3\}$.
So, for any $N$ with $4N+3 \leq K$, we have a commutative diagram as follows:
$$\xymatrix{ S( \{0, 2K+1\}, 2)  \times S(I_N,  4) \cup S(I_{2K+1}, 2)  \times \{0\} \ar[r]^-{\text{incl.}} \ar[rd]_-{\rho_2 \times \rho_2} & S(I_{2K+1}, 2)  \times S(I_N, 4) \ar[d]^{R_K}\\
 & I_{2K+1} \times \{0\} \cup \{0, 2K+1\} \times S(I_N, 2) }$$

The final step is to take a general $I_M \times I_N$, and fit it into the above.  For this, we subdivide $I_M$ by a suitable power of $2$.  For any $p \geq 2$, and any $M \geq 1$, observe that we have
$$S(I_M, 2^p) = S(  S(I_M, 2^{p-2}), 4) = I_{4K+3}$$
with $K = M 2^{p-2} + 2^{p-2} - 1$.  So, given an $M$ and an $N$,  choose a $p$ for which we have   $4N+3 \leq M 2^{p-2} + 2^{p-2} - 1$ (the smallest such $p$ will do).  Then from the above, with $K = M 2^{p-2} + 2^{p-2} - 1$, we have a map $R_K$ and a  commutative diagram as above.  But here, we have $S(I_M, 2^{p-1}) =S(  S(I_M, 2^{p-2}), 2) = I_{2K+1}$, and so we may project with $\rho_{2^{p-1}}\colon I_{2K+1} = S(I_M, 2^{p-1}) \to I_M$, to obtain a commutative diagram (still with $K = M 2^{p-2} + 2^{p-2} - 1$)
$$\xymatrix{ S( \{0, 2K+1\}, 2)  \times S(I_N,  4) \cup S(I_{M}, 2^p)  \times \{0\} \ar[r]^-{\text{incl.}} \ar[rd]_-{\rho_{2^{p}} \times \rho_2} & S(I_{M}, 2^p)  \times S(I_N, 4) \ar[d]^{\rho_{2^{p-1}}\circ R_K}\\
 & I_{M} \times \{0\} \cup \{0, M\} \times S(I_N, 2) }$$
Then $\rho_{2^{p-1}}\circ R_K$ is a retraction in the sense required in \propref{prop: retraction iff cof}.
\end{proof}

\section{Digital Fibrations}\label{sec: fibrations}

Now we use our results on digital cofibrations to develop some ideas about fibrations in the digital setting. 
Despite the heading of the section, however,   our development stops short of offering a general definition of fibration in the digital setting: we have been unable, so far, to formulate a general definition that includes the examples we focus on here, and that also has some use beyond them.
Rather, we focus on
developing an adapted homotopy lifting property for the evaluation maps (path ``fibrations")  of \defref{def:dig eval maps} as well as the based version of one of these  (see \defref{def: based path fibration} below). Our reasons for this focus are two-fold.  First, we wish to build on the results of \secref{sec: cofibrations} so as to add depth to our development.  Second, these evaluation maps, in the topological setting, are germane to the topics of Lusternik--Schnirelmann category (mentioned at several points in the introduction) and a second, related, numerical invariant called \emph{topological complexity} (see \cite{Far06} and \cite{G-L-V18}).  In fact, we use one of the results developed in this section to give a preliminary treatment of Lusternik-Schnirelmann category in the digital setting in \secref{sec: cat and TC} below.  We do not attempt to treat topological complexity in this paper.  But the results of this section do provide a basis for just such a treatment, which we intend to pursue in a subsequent paper.  See also \secref{sec: future} below for some more discussion of these topics.

 In the topological setting, a \emph{fibration}  is a map that has the homotopy lifting property.    That is, $p\colon E \to B$ is a fibration when, for any $Z$ the following  commutative diagram has a filler $\bar{H}\colon Z \times I \to E$.  Here, the map $i\colon \{0\} \to I$ denotes inclusion of the endpoint $0$ into the unit interval.  Namely, the homotopy $H\colon Z \times I \to B$ lifts through $p$ to a homotopy $\bar{H}\colon Z \times I \to E$ that begins at the map $f \colon Z \to E$:
$$\xymatrix{
Z \times \{0\} \ar[r]^-{f} \ar[d]_-{\mathrm{id}_Z \times i}  & E \ar[d]^-{p}\\
Z \times I \ar[r]_-{H}  \ar@{.>}[ru]_{\bar{H}} & B   }
$$
Furthermore, cofibrations provide an important source of fibrations, because of the following result (sometimes referred to as Borsuk's theorem).

\begin{theorem}\label{thm: Borsuk}
In the topological setting, suppose that we have an inclusion $j \colon A \to X$ of a closed subspace $A$ into $X$.  If $j$ is a cofibration, then the induced map of mapping spaces
$j^*\colon \map(X, Y) \to \map(A, Y)$ is a fibration, for any $Y$.
\end{theorem}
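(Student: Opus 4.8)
The plan is to reduce the homotopy lifting problem for $j^*$ to an extension problem, and then to solve that extension problem using the retraction characterization of cofibrations recorded in \disref{dis: cofibn pushout}. First I would unwind what it means for $j^*$ to be a fibration: given a test space $Z$, a map $f \colon Z \to \map(X, Y)$, and a homotopy $H \colon Z \times I \to \map(A, Y)$ agreeing initially, so that $j^* \circ f = H(-, 0)$, I must produce a lift $\bar H \colon Z \times I \to \map(X, Y)$ with $\bar H(-, 0) = f$ and $j^* \circ \bar H = H$.

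Next I would pass to adjoints via the exponential law. The data $f$ and $H$ become maps $\hat f \colon Z \times X \to Y$ and $\hat H \colon Z \times A \times I \to Y$, and the compatibility condition says precisely that they agree on $Z \times A \times \{0\}$, so they glue to a single continuous map
$$\Phi \colon Z \times \big( X \times \{0\} \cup A \times I \big) \to Y,$$
the gluing being continuous because $A$ is closed in $X$. Adjointing back, constructing the lift $\bar H$ is exactly the problem of extending $\Phi$ across all of $Z \times X \times I$.

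I would then invoke the cofibration hypothesis. By \disref{dis: cofibn pushout}, since $A$ is closed and $j$ is a cofibration, there is a retraction $r \colon X \times I \to X \times \{0\} \cup A \times I$; hence $\mathrm{id}_Z \times r$ retracts $Z \times X \times I$ onto $Z \times (X \times \{0\} \cup A \times I)$, and I would take the adjoint lift to be $\Phi \circ (\mathrm{id}_Z \times r)$. Because $r$ fixes $X \times \{0\} \cup A \times I$ pointwise, restricting to $Z \times X \times \{0\}$ recovers $\hat f$ and restricting to $Z \times A \times I$ recovers $\hat H$; adjointing back one last time yields a lift $\bar H$ with $\bar H(-, 0) = f$ and $j^* \circ \bar H = H$, as required.

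The main obstacle is not the construction itself---once the retraction is in hand, the lift is essentially forced---but rather the point-set bookkeeping that legitimizes the adjunctions: one needs the exponential correspondence $\map(Z \times W, Y) \cong \map(Z, \map(W, Y))$ to hold as a homeomorphism for the relevant factors (here $W = X$ and $W = A \times I$), together with the continuity of the glued map $\Phi$. This is where closedness of $A$ in $X$ enters, and where mild point-set hypotheses (working with compactly generated spaces, or assuming enough local compactness that evaluation $\map(X, Y) \times X \to Y$ is continuous) are implicitly in play. These are the standard technicalities attached to Borsuk's theorem, and I would discharge them by the usual appeal to the exponential law rather than belabour them, since the conceptual content is entirely the retraction argument. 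I note this topological statement serves chiefly as motivation, with the genuine work deferred to its digital counterpart \thmref{thm: Digital Borsuk}.
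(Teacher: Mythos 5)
Your argument is correct: it is the standard proof of Borsuk's theorem (double adjunction reducing homotopy lifting for $j^*$ to an extension problem, solved by composing with the retraction $r\colon X \times I \to X\times\{0\} \cup A \times I$ supplied by the closed-cofibration characterization of \disref{dis: cofibn pushout}), and your closing remark is accurate---the paper states this classical result without proof, purely as motivation. Your proof is precisely the template the paper then implements digitally in \thmref{thm: Digital Borsuk}, where the subdivision-based filler of \defref{def: digital cofibration} (via \lemref{lem: 1 x i cofibration}) replaces your retraction $r$, so there is nothing to add beyond the point-set caveats about the exponential law that you already flag.
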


\noindent{}This result---still in the topological setting---is then used to deduce various evaluation maps, such as $PY \to Y$ and its based counterpart are fibrations. We now adapt the same line of development into the digital setting.

Just as we saw for cofibrations, if we simply repeat the ordinary definition of fibration in the digital setting, many interesting examples are excluded from qualifying as a fibration.  Instead, we take our cue from \thmref{thm: Borsuk}, and develop in this section an adapted homotopy lifting property for certain path fibrations. 
We begin with a digital version of \thmref{thm: Borsuk}.

\begin{theorem}\label{thm: Digital Borsuk}
Let $j \colon A \to X$ be an inclusion of digital images.   For any digital images $Z$ and $Y$, suppose we are given a commutative diagram
$$\xymatrix{
Z \times \{0\} \ar[r]^-{f} \ar[d]_-{\mathrm{id}_Z \times i}  & \map(X, Y) \ar[d]^-{j^*}\\
Z \times I_M \ar[r]_-{H}  & \map(A, Y).   } $$
If $j$ is a cofibration, then there are subdivisions $S( -, k)$ and $S(I_M, l)$, and  a filler $\overline{H}\colon S(Z, k) \times S(I_M, l) \to \map( S(X, k), Y)$ in the following commutative diagram:
\begin{equation}\label{eq: fibration diag}
\xymatrix{
S(Z, k) \times \{0\} \ar[d]_{\mathrm{id}\times i} \ar[r]^-{\rho_k\times \mathrm{id}} & Z \times \{0\} \ar[r]^-{f} \ar[d]_-{\mathrm{id}\times i}  &  \map(X, Y) \ar[d]^-{j^*} \ar[r]^-{ (\rho_k)^* } &  \map( S(X, k), Y) \ar[d]^{(S(j))^*}\\
S(Z, k) \times S(I_M, l)\ar[r]_-{\rho_k \times \rho_l}  \ar@{.>}[rrru]^-{\overline{H}} &Z \times I_M \ar[r]_-{H}   &  \map(A, Y)  \ar[r]_-{ (\rho_k)^*  } & \map( S(A, k), Y)   }
\end{equation}
\end{theorem}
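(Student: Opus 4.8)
The plan is to reduce the lifting problem for $j^*$ to the retraction characterization of cofibrations in \propref{prop: retraction iff cof}, via the adjoint (exponential) correspondence of \propref{prop: exponential law}. In the topological proof of \thmref{thm: Borsuk}, one takes adjoints to rewrite the lifting square as the problem of extending a map defined on $(X \times \{0\}) \cup (A \times I)$ over all of $X \times I$, parametrized by $Z$; the cofibration hypothesis supplies a retraction onto this subspace, and composing with it produces the lift. I would follow the same outline, inserting subdivisions exactly where \propref{prop: retraction iff cof} and \lemref{lem:pushout} require them.

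Concretely, first I would apply \propref{prop: retraction iff cof} to the cofibration $j$ and the interval $I_M$, obtaining integers $k$, $l_0 \geq 2$, $m$ and a retraction $R \colon S(X, k) \times S(I_M, l_0 m) \to X \times \{0\} \cup A \times S(I_M, l_0)$ satisfying $R \circ \mathrm{incl.} = \rho_k \times \rho_m$ on the subdivided corner. Next I would pass to adjoints: write $\widehat{f} \colon Z \times X \to Y$ for the adjoint of $f$ and $\widehat{H} \colon Z \times A \times I_M \to Y$ for a reordering of the adjoint of $H$. The commutativity of the given square says precisely that $\widehat{f}$ and $\widehat{H}$ agree on $Z \times A \times \{0\}$, so applying \lemref{lem:pushout} to the inclusion $\mathrm{id}_Z \times j \colon Z \times A \to Z \times X$ (with $l_0 \geq 2$) glues them into a single continuous map $\Phi \colon Z \times \big( X \times \{0\} \cup A \times S(I_M, l_0) \big) \to Y$, equal to $\widehat{f}$ on $Z \times X \times \{0\}$ and to $\widehat{H} \circ (\mathrm{id} \times \rho_{l_0})$ on $Z \times A \times S(I_M, l_0)$.

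I would then build the desired filler through its adjoint. Setting $l = l_0 m$, let $G \colon S(Z, k) \times S(I_M, l) \times S(X, k) \to Y$ be given by $G(z', t', x') = \Phi\big( \rho_k(z'), R(x', t') \big)$; this is continuous as a composite of continuous maps, and its adjoint $\overline{H} \colon S(Z, k) \times S(I_M, l) \to \map(S(X, k), Y)$ is then continuous by \propref{prop: exponential law}. Verifying the two triangles of \eqref{eq: fibration diag} would be a direct computation using the retraction identity: on $S(X, k) \times \{0\}$ one has $R(x', 0) = (\rho_k(x'), 0)$, giving $G(z', 0, x') = \widehat{f}(\rho_k(z'), \rho_k(x'))$ for the upper triangle, while on the image of $S(j, k)$ one has $R(S(j,k)(a'), t') = (\rho_k(a'), \rho_m(t'))$, so that $G(z', t', S(j,k)(a')) = \widehat{H}(\rho_k(z'), \rho_l(t'), \rho_k(a'))$ after using $\rho_{l_0} \circ \rho_m = \rho_{l_0 m} = \rho_l$, which is the lower triangle.

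I expect the main obstacle to be continuity of the glued map $\Phi$ rather than any of the formal diagram-chasing: this is exactly the point at which the naive topological argument fails in the digital setting (cf.\ \exref{ex: non-cofibration}), and it is rescued only by the subdivision built into \lemref{lem:pushout}, which is why the hypothesis $l_0 \geq 2$ is essential. The remaining difficulty is purely bookkeeping, namely keeping the subdivision parameters on $X$, $Z$, and $I_M$ consistent (here $Z$ is subdivided by the same $k$ as $X$, and the filler simply factors through $\rho_k$ on $Z$), and tracking the identification $\rho_{l_0} \circ \rho_m = \rho_l$ so that the projections in \eqref{eq: fibration diag} match the composites produced by $R$.
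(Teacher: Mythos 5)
Your proposal is correct and follows essentially the same route as the paper: the paper's proof also adjoints the square to an extension problem along $\mathrm{id}_Z \times j$, invokes \lemref{lem: 1 x i cofibration} (whose proof supplies precisely the retraction $\rho_k \times R$ you use) together with \defref{def: digital cofibration}, and adjoints back. Your version merely inlines that machinery---applying \propref{prop: retraction iff cof} to $j$ directly and gluing via \lemref{lem:pushout}---so your composite $\Phi \circ (\rho_k \times R)$ is exactly the filler the paper's argument produces, and your bookkeeping ($l = l_0 m$, $\rho_{l_0} \circ \rho_m = \rho_{l_0 m}$, the filler factoring through $\rho_k$ on $Z$) all checks out.
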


\begin{proof}
Begin with the given data $f$ and $H$, adjoint each to give maps
$$\widehat{f} \colon Z \times X \times \{0\} \to Y \qquad \text{and} \qquad \widehat{H} \colon Z \times A \times I_M \to Y,$$
and adjoint once more to get continuous maps
$$f' \colon Z \times X  \to \map(\{0\}, Y) \qquad \text{and} \qquad H' \colon Z \times A  \to \map(I_M, Y).$$
Continuity is preserved at each step, by \propref{prop: exponential law}.   Both steps may be combined into the formulas
$$f'(z, x)(0) = f(z, 0)(x) \qquad \text{and} \qquad H'(z, a)(t) = H(z, t)(a),$$
for typical points $a \in A$, $x \in X$, $z \in Z$, and $t \in I_M$.  One checks from these formulas that we have a  commutative diagram
$$\xymatrix{
Z \times A\ar[r]^-{H'} \ar[d]_-{\mathrm{id} \times j}  & \map(I_M, Y) \ar[d]^-{i^*}\\
Z \times X \ar[r]_-{f'}  & \map(\{0\}, Y).   } $$
Now, as observed in  item (2) of \exref{ex: induced maps},  $i^* \colon \map(I_M, Y) \to \map(\{0\}, Y)$ is nothing other than the evaluation map $\text{ev}_0 \colon P_MY \to Y$, and $\mathrm{id} \times j \colon Z \times A \to Z \times X$ is a cofibration, by \lemref{lem: 1 x i cofibration}.  Therefore, \emph{per} \defref{def: digital cofibration}, there are subdivisions $S( Z \times X, k) = S( Z, k)  \times S( X, k)$ and $S(I_M, l)$, and  a filler $\overline{H'} \colon S( Z, k)  \times S( X, k) \to \map( S(I_M, l), Y)$ in the following commutative diagram:
\begin{equation}\label{eq: cof diag}
\xymatrix{
S( Z, k)  \times S(A, k) \ar[d]_{\text{id} \times S(j,k)} \ar[r]^-{\rho_k\times \rho_k} & Z \times A \ar[r]^-{H} \ar[d]_-{\mathrm{id} \times j }  & \map(I_M, Y) \ar[d]^-{\mathrm{ev}_0} \ar[r]^-{ (\rho_l)^* } &  \map( S(I_M, l), Y) \ar[ld]^{\mathrm{ev}_0}\\
S( Z, k)  \times S(X, k) \ar[r]_-{\rho_k\times \rho_k}  \ar@{.>}[rrru]^-{\overline{H'}} &Z \times X \ar[r]_-{f'}   & \map(\{0\}, Y)  }
\end{equation}
The adjoint of the filler $\overline{H'}$ gives a map $S( Z, k)  \times S(I_M, l) \times S( X, k) \to Y$, and a second adjoint finally gives a map
$$\overline{H}\colon S( Z, k)  \times S(I_M, l) \to  \map(S( X, k), Y),$$
defined by the formula $\overline{H}(z', t')(x') = \overline{H'}(z', x')(t')$, for typical points $z' \in S( Z, k)$, $x' \in S( X, k)$,  and $t' \in S(I_M, l)$.  We now check that this map provides the desired filler for  diagram (\ref{eq: fibration diag}) in the enunciation.

First consider the lower right triangle of (\ref{eq: fibration diag}).  For typical points $z' \in S( Z, k)$, $a' \in S( A, k)$,  and $t' \in S(I_M, l)$, we have
$$\begin{aligned}
\big( S(j, k)\big)^*\circ \overline{H} (z', t')(a') &= \overline{H} (z', t') \big( S(j, k)(a') \big)\\
&= \overline{H'} (z', S(j, k)(a') )(t').
\end{aligned}
$$
Now from diagram (\ref{eq: cof diag}) above,  we may continue this string of equalities, to write
$$\begin{aligned}
\big( S(j, k)\big)^*\circ \overline{H} (z', t')(a') &=  \overline{H'} (z', S(j, k)(a') )(t')\\
&= (\rho_l)^*\circ H'\circ (\rho_k\times\rho_k)(z', a')(t')\\
&= H'\big( \rho_k(z'), \rho_k(a')\big) \big(\rho_l(t')\big)\\
&= H \big( \rho_k(z'), \rho_l(t')\big) \big(\rho_k(a')\big)\\
&= (\rho_k)^*\circ H \circ (\rho_k\times\rho_l)(z', t')(a').
\end{aligned}
$$
That is, we have
$$\big( S(j, k)\big)^*\circ \overline{H} =  (\rho_k)^*\circ H \circ (\rho_k\times\rho_l)\colon S(Z, k) \times S(I_M, l) \to \map( S(A, k), Y).$$

Next consider the upper left triangle of (\ref{eq: fibration diag}).  For typical points $z' \in S( Z, k)$, $x' \in S( X, k)$, we have
$$\begin{aligned}
\overline{H}\circ (\text{id} \times i) (z', 0)(x') &=  \overline{H} (z', 0 )(x') =  \overline{H'} (z', x' )(0)\\
&= f'\big( \rho_k(z'), \rho_k(x')\big) (0) \text{ \ \ \   (from diagram (\ref{eq: cof diag}))}\\
&= f \big( \rho_k(z'), 0\big) \big(\rho_k(x')\big) = (\rho_k)^*\circ f \circ (\rho_k\times i)(z', 0)(x').
\end{aligned}
$$
So we have
$$ \overline{H} \circ (\text{id} \times i) = (\rho_k)^*\circ f \circ (\rho_k\times i) \colon S(Z, k) \times \{0\} \to \map( S(X, k), Y),$$
and $\overline{H}$ is  the desired filler.
\end{proof}

We may deduce from this result the adapted homotopy lifting property possessed by the map $\text{ev}_0\colon P_NY \to Y$ that qualifies it to be called a \emph{path fibration} in the digital setting.
In this result, we use $i$ to denote a generic  $i\colon \{0\} \to I_M$, for any $M$,  and $j\colon \{0\} \to I_N$ to emphasize the cofibration of \thmref{thm: well-pointed interval} (both are cofibrations, however).  We also make the identification of  $\text{ev}_0\colon P_NY \to Y$ and $j^* \colon \map(I_N, Y) \to \map(\{0\}, Y)$   observed in  item (2) of \exref{ex: induced maps} and already used in the proof of \thmref{thm: Digital Borsuk}.

\begin{corollary}\label{cor: path fibration}
For any digital image $Y$, suppose given a commutative diagram
$$\xymatrix{
Z \times \{0\} \ar[r]^-{f} \ar[d]_-{\mathrm{id} \times i}  & P_NY\ar[d]^-{\text{ev}_0}\\
Z \times I_M \ar[r]_-{H}  & Y.   } $$
Then there are subdivisions $S( Z, k)$, $S(I_N, k) = I_{kN+k-1}$, and $S(I_M, l)= I_{lM+l-1}$, and  a filler $\overline{H}\colon S(Z, k) \times I_{lM+l-1} \to P_{kN+k-1}Y$ in the following commutative diagram:
$$
\xymatrix{
S(Z, k) \times \{0\} \ar[d]_{\mathrm{id}\times i} \ar[r]^-{\rho_k\times \mathrm{id}} & Z \times \{0\} \ar[r]^-{f} \ar[d]_-{\mathrm{id}\times i}  &  P_NY \ar[d]^-{\text{ev}_0} \ar[r]^-{ (\rho_k)^* } &  P_{kN+k-1}Y \ar[d]^{\text{ev}_0}\\
S(Z, k) \times I_{lM+l-1}\ar[r]_-{\rho_k \times \rho_l}  \ar@{.>}[rrru]^-{\overline{H}} &Z \times I_M \ar[r]_-{H}   &  Y  \ar@{=}[r] & Y  }
$$
\end{corollary}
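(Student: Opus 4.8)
The plan is to recognize the corollary as the special case of \thmref{thm: Digital Borsuk} in which the cofibration is the inclusion $j\colon \{0\} \to I_N$ furnished by \thmref{thm: well-pointed interval}. Concretely, I would set $A = \{0\}$ and $X = I_N$ in \thmref{thm: Digital Borsuk}, so that $\map(X, Y) = \map(I_N, Y) = P_NY$ and, under the identification $\map(\{0\}, Y) \cong Y$ of item (2) of \exref{ex: induced maps}, the induced map $j^*\colon \map(I_N, Y) \to \map(\{0\}, Y)$ becomes precisely $\text{ev}_0\colon P_NY \to Y$. With these identifications the given commutative square of the corollary is exactly the hypothesis square of \thmref{thm: Digital Borsuk}, and $j$ is a cofibration by \thmref{thm: well-pointed interval}, so the theorem applies directly.

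Applying \thmref{thm: Digital Borsuk} then yields subdivisions $S(Z, k)$ and $S(I_M, l)$ together with a filler $\overline{H}\colon S(Z, k) \times S(I_M, l) \to \map(S(I_N, k), Y)$ making diagram (\ref{eq: fibration diag}) commute, where $I_N$ is subdivided by the same parameter $k$ as $Z$. Next I would translate the mapping spaces into path spaces using item (3) of \exref{ex: induced maps}: since $S(I_N, k) = I_{kN+k-1}$ and $S(I_M, l) = I_{lM+l-1}$, the filler reads $\overline{H}\colon S(Z, k) \times I_{lM+l-1} \to P_{kN+k-1}Y$, and the top row together with the upper-left triangle of (\ref{eq: fibration diag}) coincide verbatim with those of the corollary, since neither involves the rightmost column.

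The one genuine point to watch—and the step I would treat most carefully—is that the bottom-right corner of (\ref{eq: fibration diag}) is $\map(S(\{0\}, k), Y)$, which is $P_{k-1}Y$ rather than $Y$, because $S(\{0\}, k) = I_{k-1}$ is a genuine interval of $k$ points, not a single point. To recover the clean statement I would post-compose the entire right-hand column with the evaluation $\text{ev}_0\colon P_{k-1}Y \to Y$. Using the explicit formula (\ref{eq: canonical subdn map}), the map $S(j, k)\colon I_{k-1} \to I_{kN+k-1}$ is the inclusion of the first $k$ points and hence fixes the point $0$, so $\text{ev}_0 \circ (S(j,k))^* = \text{ev}_0\colon P_{kN+k-1}Y \to Y$; likewise $(\rho_k)^*\colon \map(\{0\}, Y) \to \map(S(\{0\}, k), Y)$ sends a point to the constant path, whence $\text{ev}_0 \circ (\rho_k)^* = \text{id}_Y$. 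Post-composing the commuting lower triangle of (\ref{eq: fibration diag}) with this $\text{ev}_0$ therefore collapses $P_{k-1}Y$ down to $Y$, turns the right column into $\text{ev}_0\colon P_{kN+k-1}Y \to Y$ and the bottom-right map into $\text{id}_Y$, and leaves the same $\overline{H}$ as filler, producing exactly the corollary's diagram. Since everything below this collapse is a routine verification of compositions, I expect no real obstacle beyond keeping the interval-versus-point bookkeeping for $S(\{0\}, k) = I_{k-1}$ straight.
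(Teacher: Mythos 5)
Your proposal is correct and is essentially the paper's own proof: the paper likewise combines \thmref{thm: well-pointed interval} with \thmref{thm: Digital Borsuk} and then collapses the corner $\map(S(\{0\},k),Y) = P_{k-1}Y$ back to $Y$. Your post-composition with $\mathrm{ev}_0\colon P_{k-1}Y \to Y$ is literally the paper's map $i^*$ induced by the right inverse $i\colon \{0\} \to S(\{0\},k) = I_{k-1}$ of $\rho_k$, and your two verifications ($\mathrm{ev}_0\circ (S(j,k))^* = \mathrm{ev}_0$ since $S(j,k)$ fixes $0$, and $\mathrm{ev}_0\circ(\rho_k)^* = \mathrm{id}_Y$) match the paper's identities $i^*\circ (S(j))^* = (S(j)\circ i)^* = \mathrm{ev}_0$ and $i^*\circ(\rho_k)^* = \mathrm{id}$.
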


\begin{proof}
By combining \thmref{thm: well-pointed interval}  and \thmref{thm: Digital Borsuk}, we obtain a filler in the following diagram:
$$
\xymatrix{
S(Z, k) \times \{0\} \ar[d]_{\mathrm{id}\times i} \ar[r]^-{\rho_k\times \mathrm{id}} & Z \times \{0\} \ar[r]^-{f} \ar[d]_-{\mathrm{id}\times i}  &  P_NY \ar[d]^-{\text{ev}_0} \ar[r]^-{ (\rho_k)^* } &  \map( S(I_N, k), Y) \ar[d]^{(S(j))^*}\\
S(Z, k) \times S(I_M, l)\ar[r]_-{\rho_k \times \rho_l}  \ar@{.>}[rrru]^-{\overline{H}} &Z \times I_M \ar[r]_-{H}   &  Y  \ar[r]_-{ (\rho_k)^*  } & \map( S(\{0\}, k), Y)   }
$$
Now here, we have a right inverse for the map $\rho_k \colon S(\{0\}, k) \to \{0\}$.  Namely, writing $S(\{0\}, k)$ as $I_{k-1}$, the inclusion $i \colon \{0\} \to I_{k-1}$ satisfies
$$\rho_k\circ i = \text{id}\colon \{0\} \to S(\{0\}, k) \to \{0\}.$$
(For a general cofibration $j \colon A \to X$, we usually do not have a map $A \to S(A, k)$.)   Therefore, $i^*\colon \map( S(\{0\}, k), Y) \to \map( \{0\}, Y) = Y$ is a left inverse of $ (\rho_k)^*\colon Y \to \map( S(\{0\}, k), Y)$.  Adding this to the right-hand part of the diagram, and re-writing $S(I_M, l)= I_{lM+l-1}$, and $\map( S(I_N, k), Y) = P_{kN+k-1}Y$, we obtain the following:
$$
\xymatrix{
S(Z, k) \times \{0\} \ar[d]_{\mathrm{id}\times i} \ar[r]^-{\rho_k\times \mathrm{id}} & Z \times \{0\} \ar[r]^-{f} \ar[d]_-{\mathrm{id}\times i}  &  P_NY \ar[d]^-{\text{ev}_0} \ar[r]^-{ (\rho_k)^* } &  P_{kN+k-1}Y \ar[d]^{i^*\circ(S(j))^*}\\
S(Z, k) \times I_{lM+l-1}\ar[r]_-{\rho_k \times \rho_l}  \ar@{.>}[rrru]^-{\overline{H}} &Z \times I_M \ar[r]_-{H}   &  Y  \ar[r]_-{ i^*\circ(\rho_k)^*=\text{id}  } & Y  }
$$
But observe that, for the right-hand vertical map, we have $i^*\circ(S(j))^* = (S(j)\circ i)^*$, and $S(j)\circ i\colon \{0\} \to S(\{0\}, k) = I_{k-1} \to S(I_N, k) =  I_{kN+k-1}$ is simply $i\colon \{0\} \to I_{kN+k-1}$.  Thus we may identify this right-hand vertical map with $\text{ev}_0\colon  P_{kN+k-1}Y \to Y$.  This results in the desired diagram.
\end{proof}

\begin{remark}\label{rem:TC fibration induced}
Recall from \defref{def:dig eval maps} the evaluation map $\pi\colon P_N Y \to Y \times Y$.  If $N \geq 2$, then we may identify  $\pi$ with the map induced on mapping spaces $j^* \colon \map( I_N, Y) \to \map(\{0, N\}, Y)$ by  the cofibration $j\colon \{0, N\} \to I_N$ of \thmref{thm: endpoint cofibration}. Note that we need $N \geq 2$ so that there are no adjacency constraints on where the two points $0$ and $N$ may be mapped.
\end{remark}

The identification of the preceding remark, together with \thmref{thm: Digital Borsuk}, leads to the following adapted homotopy lifting property for the evaluation map $\pi$, thus qualifying it also to be called a path fibration.   In this result, we make various identifications similar to those made in \corref{cor: path fibration}.

\begin{corollary}\label{cor: TC fibration}
For any digital space $Y$, suppose given a commutative diagram
$$\xymatrix{
Z \times \{0\} \ar[r]^-{f} \ar[d]_-{\mathrm{id} \times i}  & P_NY\ar[d]^-{\pi}\\
Z \times I_M \ar[r]_-{H}  & Y\times Y   } $$
with $N \geq 2$.
Then there are subdivisions $S( Z, k)$, $S(I_N, k) = I_{kN+k-1}$, and $S(I_M, l)= I_{lM+l-1}$, and  a filler $\overline{H}\colon S(Z, k) \times I_{lM+l-1} \to P_{kN+k-1}Y$ in the following commutative diagram:
$$
\xymatrix{
S(Z, k) \times \{0\} \ar[d]_{\mathrm{id}\times i} \ar[r]^-{\rho_k\times \mathrm{id}} & Z \times \{0\} \ar[r]^-{f} \ar[d]_-{\mathrm{id}\times i}  &  P_NY \ar[d]^-{\pi} \ar[r]^-{ (\rho_k)^* } &  P_{kN+k-1}Y \ar[d]^{\pi}\\
S(Z, k) \times I_{kN+k-1}\ar[r]_-{\rho_k \times \rho_l}  \ar@{.>}[rrru]^-{\overline{H}} &Z \times I_M \ar[r]_-{H}   &  Y \times Y  \ar@{=}[r]  & Y \times Y   }
$$
\end{corollary}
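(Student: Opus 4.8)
The plan is to follow the template of \corref{cor: path fibration} almost verbatim, replacing the basepoint cofibration $\{0\}\to I_N$ with the endpoint cofibration $\{0, N\}\to I_N$ supplied by \thmref{thm: endpoint cofibration}. First I would invoke \remref{rem:TC fibration induced} to identify $\pi\colon P_NY\to Y\times Y$ with the map $j^*\colon \map(I_N, Y)\to\map(\{0, N\}, Y)$ induced by $j\colon\{0, N\}\to I_N$; this identification is precisely where the hypothesis $N\geq 2$ is used, since it forces $0\not\sim N$ and hence makes $\map(\{0, N\}, Y)$ the unconstrained product $Y\times Y$. As $j$ is a cofibration, I would then apply \thmref{thm: Digital Borsuk} directly to the given lifting diagram to produce subdivisions $S(Z, k)$, $S(I_N, k)=I_{kN+k-1}$, and $S(I_M, l)=I_{lM+l-1}$, together with a filler $\overline{H}\colon S(Z, k)\times I_{lM+l-1}\to\map(S(I_N, k), Y)=P_{kN+k-1}Y$ in the Digital Borsuk diagram \eqref{eq: fibration diag}.

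The Digital Borsuk diagram produces on its right-hand edge the map $\bigl(S(j, k)\bigr)^*\colon\map(S(I_N, k), Y)\to\map(S(\{0, N\}, k), Y)$, together with $(\rho_k)^*\colon\map(\{0, N\}, Y)\to\map(S(\{0, N\}, k), Y)$ along the bottom. As in the proof of \corref{cor: path fibration}, the essential observation is that $\rho_k\colon S(\{0, N\}, k)\to\{0, N\}$ admits a continuous section. Writing $S(\{0, N\}, k)=\{0,\ldots,k-1\}\cup\{kN,\ldots,kN+k-1\}$, I would take $s\colon\{0, N\}\to S(\{0, N\}, k)$ with $s(0)=0$ and $s(N)=kN+k-1$; this preserves adjacency vacuously, since neither source pair nor target pair is adjacent, and it satisfies $\rho_k\circ s=\mathrm{id}_{\{0, N\}}$. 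Consequently $s^*$ is a left inverse of $(\rho_k)^*$, so that $s^*\circ(\rho_k)^*=\mathrm{id}$ on $\map(\{0, N\}, Y)=Y\times Y$, and post-composing the right-hand edge with $s^*$ collapses the bottom row back to $Y\times Y$.

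It then remains to identify the resulting right-hand vertical map $s^*\circ\bigl(S(j, k)\bigr)^*$ with $\pi\colon P_{kN+k-1}Y\to Y\times Y$. For this I would use $s^*\circ\bigl(S(j, k)\bigr)^*=\bigl(S(j, k)\circ s\bigr)^*$ and compute $S(j, k)\circ s$ from the explicit formula \eqref{eq: canonical subdn map}: it sends $0\mapsto 0$ and $N\mapsto kN+k-1$ inside $I_{kN+k-1}=S(I_N, k)$. Since $0$ and $kN+k-1$ are exactly the two endpoints of the long interval $I_{kN+k-1}$, the map $\bigl(S(j, k)\circ s\bigr)^*$ is evaluation of a length-$(kN+k-1)$ path at its two endpoints, which is precisely $\pi\colon P_{kN+k-1}Y\to Y\times Y$. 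Splicing $s^*$ into the right-hand side of \eqref{eq: fibration diag} and rewriting the mapping spaces as path spaces then yields the stated diagram with $\overline{H}$ as filler.

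The step I expect to require the most care is the bookkeeping along this right-hand edge: one must simultaneously verify that $s$ is a genuine section of $\rho_k$ (so that the bottom simplifies to the identity on $Y\times Y$ under the standard identification $\map(\{0, N\}, Y)\cong Y\times Y$, with $0$ matched to the first factor and $N$ to the second) and that $S(j, k)\circ s$ lands on the two \emph{extreme} endpoints of $I_{kN+k-1}$, so that the vertical map is genuinely $\pi$ and not some other pair of evaluations. Everything else is a direct transcription of \corref{cor: path fibration}, with the single point $\{0\}$ replaced throughout by the pair $\{0, N\}$.
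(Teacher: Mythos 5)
Your proposal is correct and follows essentially the same route as the paper: identify $\pi$ with $j^*$ for the endpoint cofibration $j\colon\{0,N\}\to I_N$ of \thmref{thm: endpoint cofibration} (using $N\geq 2$ as in \remref{rem:TC fibration induced}), apply \thmref{thm: Digital Borsuk}, and then collapse the right-hand edge of diagram \eqref{eq: fibration diag} via the section $s\colon\{0,N\}\to S(\{0,N\},k)$ with $s(0)=0$, $s(N)=kN+k-1$, computing $s^*\circ\bigl(S(j,k)\bigr)^*=\bigl(S(j,k)\circ s\bigr)^*=\pi$ on $P_{kN+k-1}Y$. The paper's proof uses exactly this section (calling it $i$) and the same final identification, so there is nothing to add.
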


\begin{proof}
We use an argument similar to that of \corref{cor: path fibration}. Combine  \thmref{thm: endpoint cofibration}  and \thmref{thm: Digital Borsuk} to obtain a commutative diagram as follows.
$$
\xymatrix{
S(Z, k) \times \{0\} \ar[d]_{\mathrm{id}\times i} \ar[r]^-{\rho_k\times \mathrm{id}} & Z \times \{0\} \ar[r]^-{f} \ar[d]_-{\mathrm{id}\times i}  &  P_NY \ar[d]^-{\pi} \ar[r]^-{ (\rho_k)^* } &  P_{kN+k-1}Y \ar[d]^{(S(j))^*}\\
S(Z, k) \times I_{kN+k-1}\ar[r]_-{\rho_k \times \rho_l}  \ar@{.>}[rrru]^-{\overline{H}} &Z \times I_M \ar[r]_-{H}   &  Y \times Y \ar[r]_-{ (\rho_k)^*  } & \map( S(\{0, N\}, k), Y)   }
$$
Here we have identified $S(I_N, k) = I_{kN+k-1}$, $\map( S(I_N, k), Y) = \map( I_{kN+k-1}, Y) = P_{kN+k-1} Y$, and written $j^* \colon \map( I_N, Y) \to \map(\{0, N\}, Y)$ as $\pi\colon P_N Y \to Y \times Y$, using the cofibration $j\colon \{0, N\} \to I_N$.  Now observe that, as in \corref{cor: path fibration}, we have a right inverse to the map $\rho_k \colon S( \{0, N\}, k) \to \{0, N\}$.  Namely, the map $i \colon  \{0, N\} \to S( \{0, N\}, k) = I_{kN+k-1}$ given by $i(0) = 0$ and $i(N) = kN+k-1$ satisfies
$$\rho_k\circ i = \text{id}\colon  \{0, N\} \to S( \{0, N\}, k) \to  \{0, N\}.$$
Hence, $i^*\colon \map( S(\{0, N\}, k), Y) \to \map(\{0, N\}, Y) = Y\times Y$ is a left inverse of $ (\rho_k)^*\colon Y\times Y \to \map( S(\{0, N\}, k), Y)$. Composing the bottom right horizontal and the right-hand vertical maps with $i^*$ results in the following diagram:
$$
\xymatrix{
S(Z, k) \times \{0\} \ar[d]_{\mathrm{id}\times i} \ar[r]^-{\rho_k\times \mathrm{id}} & Z \times \{0\} \ar[r]^-{f} \ar[d]_-{\mathrm{id}\times i}  &  P_NY \ar[d]^-{\pi} \ar[r]^-{ (\rho_k)^* } &  P_{kN+k-1}Y \ar[d]^{i^*\circ(S(j))^*}\\
S(Z, k) \times I_{kN+k-1}\ar[r]_-{\rho_k \times \rho_l}  \ar@{.>}[rrru]^-{\overline{H}} &Z \times I_M \ar[r]_-{H}   &  Y \times Y \ar[r]_-{ i^*\circ(\rho_k)^*=\text{id} } & Y \times Y  }
$$
Finally, observe that $i^*\circ(S(j))^* = (S(j)\circ i)^*$, and $S(j)\circ i\colon \{0, N\}  \to S(\{0, N\} , k) = I_{k-1} \cup \{kN, kN+1, \ldots, kN + k-1\} \to S(I_N, k) =  I_{kN+k-1}$ is simply the map that sends $0 \mapsto 0$ and $N \mapsto kN+k-1$.    So we may identify this right-hand vertical map with $\pi\colon  P_{kN+k-1}Y \to Y\times Y$.
\end{proof}

\begin{remark}\label{rem: not surjective}
In \corref{cor: path fibration}, the evaluation map $\text{ev}_0\colon P_NY \to Y$ is always surjective.  In \corref{cor: TC fibration}, however, the evaluation map $\pi\colon P_N Y \to Y \times Y$ in general is not surjective.  This is because there may be points $a, b \in Y$ ``too far apart" to be connected by a path in $Y$ of length $N$.  Notice, though, that so long as $Y$ is connected,  the subdivided counterpart of $\pi$, namely  $(S(j))^* \colon \map( S(I_N, k), Y) \to  \map( S(\{0, N\}, k), Y\times Y) $, will always be surjective if $k$ is sufficiently large.
\end{remark}

\thmref{thm: Digital Borsuk} also leads to a corresponding result about induced maps of \emph{based mapping spaces}, which is a further source of important examples of fibrations.  In the topological setting, we have a more general result that says the restriction of a fibration is again a fibration.  Since we have no general notion of fibration, as yet, in the digital setting, we will restrict ourselves to this particular situation.

Suppose that $j\colon A \to X$  is an inclusion of \emph{based} digital images, which is to say that we specify a basepoint $a_0 \in A \subseteq X$, and $j(a_0) = a_0 \in X$ is the basepoint of $X$.  Furthermore, suppose that $Y$ is a based digital image with basepoint $y_0 \in Y$, and let $\map_*(X, Y)$, respectively $\map_*(A, Y)$, denote the based mapping spaces that consist of continuous maps $f\colon X \to Y$, respectively $f\colon A \to Y$,  with $f(a_0) = y_0$.  Then we have an induced map of based mapping spaces $j^*\colon  \map_*(X, Y) \to \map_*(A, Y)$.  Furthermore, if $x_0 \in X$ is a choice of basepoint in a digital image $X$, then we may regard $kx_0$ as a basepoint in $S(X, k)$ (its coordinates will each be scaled by $k$, according to our description of subdivision) and with this convention the canonical map $\rho_k\colon  S(X, k) \to X$ is a based map.

\begin{theorem}\label{thm: Based Digital Borsuk}
With the notation above, let $j \colon A \to X$ be a based inclusion of based digital images.   For any digital image $Z$, and any based digital image $Y$, suppose we are given a commutative diagram
$$\xymatrix{
Z \times \{0\} \ar[r]^-{f} \ar[d]_-{\mathrm{id}_Z \times i}  & \map_*(X, Y) \ar[d]^-{j^*}\\
Z \times I_m \ar[r]_-{H}  & \map_*(A, Y).   } $$
If $j$ is a cofibration (in the sense of \defref{def: digital cofibration}, not in a ``based" sense),  then there are subdivisions $S( -, k)$ and $S(I_m, l)$, and  a filler $\overline{H}\colon S(Z, k) \times S(I_m, l) \to \map_*( S(X, k), Y)$ in the following commutative diagram:
$$
\xymatrix{
S(Z, k) \times \{0\} \ar[d]_{\mathrm{id}\times i} \ar[r]^-{\rho_k\times \mathrm{id}} & Z \times \{0\} \ar[r]^-{f} \ar[d]_-{\mathrm{id}\times i}  &  \map_*(X, Y) \ar[d]^-{j^*} \ar[r]^-{ (\rho_k)^* } &  \map_*( S(X, k), Y) \ar[d]^{(S(j))^*}\\
S(Z, k) \times S(I_m, l)\ar[r]_-{\rho_k \times \rho_l}  \ar@{.>}[rrru]^-{\overline{H}} &Z \times I_m \ar[r]_-{H}   &  \map_*(A, Y)  \ar[r]_-{ (\rho_k)^*  } & \map_*( S(A, k), Y)   }
$$
\end{theorem}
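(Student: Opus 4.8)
The plan is to re-run the construction in the proof of \thmref{thm: Digital Borsuk} essentially verbatim, and then to check that the basepoint hypotheses on the data force the resulting filler to land in the \emph{based} mapping space $\map_*(S(X,k), Y)$. The construction of \thmref{thm: Digital Borsuk} is entirely formulaic in terms of adjoints, each step preserving continuity by the exponential law \propref{prop: exponential law}, with a single appeal to the cofibration property of $j$. So the only genuinely new work is to arrange that this one intermediate filler respects basepoints; everything else carries over unchanged.

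First I would adjoint the given $f$ and $H$ exactly as in \thmref{thm: Digital Borsuk} to obtain
$$f'\colon Z \times X \to \map(\{0\}, Y) \qquad \text{and} \qquad H'\colon Z \times A \to \map(I_m, Y),$$
fitting into the cofibration diagram for the inclusion $\mathrm{id}\times j\colon Z \times A \to Z \times X$, which is a cofibration by \lemref{lem: 1 x i cofibration}. The based hypotheses translate cleanly under these adjunctions: since $H(z,t)(a_0) = y_0$ for all $z, t$, we have $H'(z, a_0)(t) = H(z,t)(a_0) = y_0$, so $H'(z, a_0)$ is the constant path at $y_0$; likewise $f'(z, a_0)(0) = y_0$.

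Next I would produce the intermediate filler not abstractly, but through the retraction characterization of \propref{prop: retraction iff cof}. That is, letting $R$ be a retraction associated to the cofibration $j$ and $\phi$ the pushout map of \lemref{lem:pushout} assembled from $H'$ and $f'$, the filler is the adjoint of $\phi\circ(\rho_k \times R)$, where $\rho_k \times R$ is the retraction for $\mathrm{id}\times j$ furnished by \lemref{lem: 1 x i cofibration}. The verification that this fills the analogue of diagram (\ref{eq: cof diag}), and that its double adjoint $\overline{H}$ fills the diagram in the enunciation, is word-for-word as in \thmref{thm: Digital Borsuk}. The new ingredient is the behaviour at the basepoint: the basepoint of $S(X,k)$ is $k\,a_0$, which lies in the subdivided fibre $S(a_0, k) \subseteq S(A, k)$, and the defining property of the retraction forces $R$ to act as the projection $\rho_k \times \rho_m$ on $S(A, k) \times S(I_m, lm)$. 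Hence for any $x'$ covering $a_0$ we get $R(x', t'') = (a_0, \rho_m(t''))$, which lands in the $A$-part of the pushout; there $\phi$ agrees with the relevant adjoint of $H'$, and so evaluates to $y_0$ by the based hypothesis on $H$. Therefore $\overline{H}(z', t'')(k\,a_0) = y_0$, that is, $\overline{H}(z', t'')$ is a based map, and $\overline{H}$ is the desired filler into $\map_*(S(X,k), Y)$.

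The main obstacle, and the reason the based statement is not a formal consequence of the unbased one, is precisely that a generic filler produced by \defref{def: digital cofibration} need not respect basepoints; one must use a \emph{specific} filler and track the image of the subdivided basepoint. This is where the retraction description of \propref{prop: retraction iff cof} is essential: because the retraction is a single fixed map that projects $S(a_0, k)$ onto $a_0$ independently of the homotopy data, composing it with the pushout map $\phi$ (which is constant at $y_0$ over $a_0$ thanks to $H$ being based) guarantees that the whole subdivided fibre $S(a_0, k)$, and in particular the basepoint $k\,a_0$, is sent to $y_0$ throughout the homotopy.
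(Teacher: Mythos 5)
Your proof is correct, but it takes a genuinely different---and more laborious---route than the paper, and your closing claim about why that route is necessary is mistaken. The paper's proof feeds the based data through the inclusions $\map_*(X,Y) \hookrightarrow \map(X,Y)$ and $\map_*(A,Y) \hookrightarrow \map(A,Y)$, applies the unbased \thmref{thm: Digital Borsuk} as a black box to obtain a filler $\overline{H}$ into $\map(S(X,k),Y)$, and then observes that \emph{any} such filler is automatically based: commutativity of the lower-right triangle gives $(S(j,k))^*\circ\overline{H} = (\rho_k)^*\circ H\circ(\rho_k\times\rho_l)$, whose image lies in $\map_*(S(A,k),Y)$ because $H$ lands in based maps and $\rho_k\colon S(A,k)\to A$ is based; and since $S(j,k)(k\,a_0)=k\,j(a_0)=k\,a_0$ is precisely the basepoint of $S(X,k)$, basedness of the restriction $(S(j,k))^*(g)$ forces $g(k\,a_0)=y_0$ for every value $g=\overline{H}(z',t')$ of the filler. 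So the based statement \emph{is} a formal consequence of the unbased one, contrary to your final paragraph: the genericity worry you raise is dissolved by the diagram itself, the essential point being that the basepoint of $S(X,k)$ lies in the image of $S(j,k)$ (this is where the hypothesis $j(a_0)=a_0\in A$ does its work), so no choice of a specific filler is needed.

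That said, your alternative argument is sound as written. Unwinding the construction through \propref{prop: retraction iff cof} and \lemref{lem:pushout}, the retraction condition does force $R(x',t'')=\big(a_0,\rho_m(t'')\big)$ for $x'\in S(a_0,k)$, the pushout map $\phi$ there evaluates through $H'(z,a_0)$, which is the constant path at $y_0$ by the based hypothesis on $H$, and hence your specific filler satisfies $\overline{H}(z',t'')(k\,a_0)=y_0$; the remaining verifications are indeed word-for-word those of \thmref{thm: Digital Borsuk}. Your approach buys an explicit, self-contained construction that would survive even if fillers were not as well-behaved as they are; the paper's approach is shorter, keeps \thmref{thm: Digital Borsuk} closed, and yields the slightly stronger fact that \emph{every} unbased filler of based data is based.
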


\begin{proof}
Via the inclusions
$$\xymatrix{ \map_*(X, Y) \ar[d]^-{j^*} \ar[r]^-{\text{incl.}} & \map(X, Y) \ar[d]^-{j^*} \\
 \map_*(A, Y)  \ar[r]_-{\text{incl.}} & \map(X, Y)}$$
 (which is the ``restriction of a fibration" hinted at above), the given data yield  a commutative diagram
$$\xymatrix{
Z \times \{0\} \ar[r]^-{f} \ar[d]_-{\mathrm{id}_Z \times i}  & \map(X, Y) \ar[d]^-{j^*}\\
Z \times I_m \ar[r]_-{H}  & \map(A, Y),   } $$
and thus a filler $\overline{H}\colon S( Z, k)  \times S(I_m, l) \to  \map(S( X, k), Y)$ as in  Diagram (\ref{eq: fibration diag}) in \thmref{thm: Digital Borsuk}.  We simply observe that, under our hypotheses, the mapping spaces in the right-hand part of this diagram may be replaced by their based counterparts.  First, since $f$ and $H$ are assumed to have image in the based mapping spaces, we may replace $\map(X, Y)$, respectively $\map(A, Y)$, by $\map_*(X, Y)$, respectively $\map_*(A, Y)$, in the diagram.  Next, since $\rho_k\colon S(A, k) \to A$ is a based  map, we may replace the lower right  $\map(S(A, k), Y)$ by  $\map_*(S(A, k), Y)$.  Finally, since the diagram commutes, the image of $\overline{H}\circ (S(j))^*$ is contained in   $\map_*(S(A, k), Y)$, and it follows that the image of $\overline{H}$ is contained in $\map_*(S(X, k), Y)$.  For suppose we have $g \in \map_*(S(X, k), Y)$ that satisfies
$(S(j))^*(g) \in \map_*(S(A, k), Y)$.  Then $(S(j))^*(g) (ka_0) = y_0$, but we have  $(S(j))^*(g) (ka_0) = g\big( S(j)(ka_0) \big) = g(ka_0)$: the map $g$ must be a based map.  It follows that we have the commutative diagram asserted.
\end{proof}

\begin{definition}\label{def: based path fibration}
For any interval $I_M$, choose $\{0\} \in I_M$ as the basepoint.
For a based digital image $Y$, with basepoint $y_0 \in Y$,  let $\mathcal{P}_N Y$ denote the \emph{based} path space (of paths of length $N$), so that
$$\mathcal{P}_N Y = \{ \gamma \in P_N Y \mid \gamma(0) = y_0\}.$$
Also, let $\text{ev}_N\colon \mathcal{P}_N Y \to Y$ denote  the evaluation map $\text{ev}_N(\gamma) = \gamma(N)$.
\end{definition}

\begin{remark}\label{rem:based path fibration induced}
Just as for the other path fibrations considered already, the based path fibration may be identified with a map induced on \emph{based} function spaces.  Namely, if we take $I_N$ with $N \geq 2$, and consider the cofibration $j\colon \{0, N\} \to I_N$ as a based map, with $0$ as the basepoint in both $\{0, N\}$ and $I_N$, then we may identify  $\text{ev}_N\colon \mathcal{P}_NY \to Y$ and $j^* \colon \map_*(I_N, Y) \to \map_*(\{0, N\}, Y)$.  Notice that here, as in \remref{rem:TC fibration induced}, we want $N \geq 2$ so that no adjacency requirement constrains the value of $f(N)$, for a based map $f \in \map_*(\{0, N\}, Y)$.
\end{remark}

\begin{corollary}\label{cor: based path fibration}
For any based digital space $Y$, suppose given a commutative diagram
$$\xymatrix{
Z \times \{0\} \ar[r]^-{f} \ar[d]_-{\mathrm{id} \times i}  & \mathcal{P}_N Y\ar[d]^-{\text{ev}_N}\\
Z \times I_M \ar[r]_-{H}  & Y,    } $$
with $N \geq 2$.
Then there are subdivisions $S( Z, k)$, $S(I_N, k) = I_{kN+k-1}$, and $S(I_M, l)= I_{lM+l-1}$, and  a filler $\overline{H}\colon S(Z, k) \times I_{lM+l-1} \to \mathcal{P}_{kN+k-1}Y$ in the following commutative diagram:
$$
\xymatrix{
S(Z, k) \times \{0\} \ar[d]_{\mathrm{id}\times i} \ar[r]^-{\rho_k\times \mathrm{id}} & Z \times \{0\} \ar[r]^-{f} \ar[d]_-{\mathrm{id}\times i}  &  \mathcal{P}_N Y \ar[d]^-{\text{ev}_N} \ar[r]^-{ (\rho_k)^* } & \mathcal{P}_{kN+k-1}Y \ar[d]^{\text{ev}_{kN+k-1}}\\
S(Z, k) \times I_{lM+l-1}\ar[r]_-{\rho_k \times \rho_l}  \ar@{.>}[rrru]^-{\overline{H}} &Z \times I_M \ar[r]_-{H}   &  Y  \ar@{=}[r]  & Y   }
$$
\end{corollary}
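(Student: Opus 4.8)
The plan is to follow the same two-step template used in the proofs of \corref{cor: path fibration} and \corref{cor: TC fibration}, replacing the unbased version of Borsuk's theorem with its based counterpart, \thmref{thm: Based Digital Borsuk}. First I would invoke the identification of \remref{rem:based path fibration induced}: taking $I_N$ with $N \geq 2$ and regarding the cofibration $j\colon \{0, N\} \to I_N$ of \thmref{thm: endpoint cofibration} as a based inclusion (with $0$ the basepoint in both $\{0, N\}$ and $I_N$), the evaluation map $\text{ev}_N\colon \mathcal{P}_N Y \to Y$ is nothing other than the induced map $j^*\colon \map_*(I_N, Y) \to \map_*(\{0, N\}, Y)$, under the identifications $\map_*(I_N, Y) \cong \mathcal{P}_N Y$ and $\map_*(\{0, N\}, Y) \cong Y$. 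The latter holds by evaluation at $N$, since for $N \geq 2$ the points $0$ and $N$ are not adjacent in $\{0, N\}$, so a based map out of $\{0, N\}$ is determined by its otherwise unconstrained value at $N$. This is precisely why the hypothesis $N \geq 2$ is needed.

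Next I would apply \thmref{thm: Based Digital Borsuk} to the based cofibration $j\colon \{0, N\} \to I_N$. This immediately produces subdivisions $S(Z, k)$, $S(I_N, k) = I_{kN+k-1}$, and $S(I_M, l) = I_{lM+l-1}$, together with a filler $\overline{H}$ in the based analogue of diagram (\ref{eq: fibration diag}), whose right-hand vertical map is $(S(j))^*\colon \map_*(S(I_N, k), Y) \to \map_*(S(\{0, N\}, k), Y)$ and whose bottom-right horizontal map is $(\rho_k)^*$.

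The remaining step reproduces the device from \corref{cor: TC fibration} for converting the right-hand target back to $Y$. I would exhibit the based right inverse $i\colon \{0, N\} \to S(\{0, N\}, k)$ to $\rho_k$, namely the map sending $0 \mapsto 0$ and $N \mapsto kN+k-1$ (the last point of $S(I_N, k)$), which satisfies $\rho_k \circ i = \text{id}$ and respects basepoints. Then $i^*$ is a left inverse to $(\rho_k)^*$, and postcomposing the right-hand vertical and bottom-right horizontal maps with $i^*$ collapses $\map_*(S(\{0, N\}, k), Y)$ back to $\map_*(\{0, N\}, Y) = Y$. Finally I would compute $i^* \circ (S(j))^* = (S(j) \circ i)^*$ and observe that $S(j) \circ i\colon \{0, N\} \to S(I_N, k) = I_{kN+k-1}$ is the based map $0 \mapsto 0$, $N \mapsto kN+k-1$; under the identification $\map_*(I_{kN+k-1}, Y) \cong \mathcal{P}_{kN+k-1}Y$ the pullback $(S(j)\circ i)^*$ is exactly $\text{ev}_{kN+k-1}$, which yields the asserted diagram.

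Because \thmref{thm: Based Digital Borsuk} and \thmref{thm: endpoint cofibration} have already carried out the substantive work, this proof is largely a matter of careful bookkeeping. The only point requiring genuine attention—and what I expect to be the main obstacle—is verifying that every map in sight respects the basepoint conventions: that $\rho_k$ is based, that the right inverse $i$ is a based map landing on the correct endpoint $kN+k-1$, and that the resulting identification of $(S(j)\circ i)^*$ with $\text{ev}_{kN+k-1}$ on the based path space is the intended one. The condition $N \geq 2$ must be threaded through the entire argument so that the cofibration $j\colon \{0, N\} \to I_N$ genuinely imposes no adjacency relation linking the two endpoint values.
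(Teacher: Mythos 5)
Your proposal is correct and follows essentially the same route as the paper: identify $\text{ev}_N$ with $j^*\colon \map_*(I_N, Y) \to \map_*(\{0,N\}, Y)$ via \remref{rem:based path fibration induced}, combine \thmref{thm: endpoint cofibration} with \thmref{thm: Based Digital Borsuk}, and then collapse the lower-right corner using the based right inverse $i\colon \{0,N\} \to S(\{0,N\},k)$ with $i(0)=0$, $i(N)=kN+k-1$, exactly as in \corref{cor: TC fibration}. Your attention to the basepoint bookkeeping and the role of $N \geq 2$ matches the paper's (terser) treatment, so there is nothing to correct.
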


\begin{proof}
Here we argue as in \corref{cor: TC fibration}, using the same cofibration $j\colon \{0, N\} \to I_N$ we used there.  Take basepoints and identify  $\text{ev}_N\colon \mathcal{P}_NY \to Y$ and $j^* \colon \map_*(I_N, Y) \to \map_*(\{0, N\}, Y)$ as in \remref{rem:based path fibration induced} above. Combine \thmref{thm: endpoint cofibration}  and \thmref{thm: Based Digital Borsuk} to obtain a commutative diagram as follows:
$$
\xymatrix{
S(Z, k) \times \{0\} \ar[d]_{\mathrm{id}\times i} \ar[r]^-{\rho_k\times \mathrm{id}} & Z \times \{0\} \ar[r]^-{f} \ar[d]_-{\mathrm{id}\times i}  &  \mathcal{P}_N Y \ar[d]^-{\text{ev}_N} \ar[r]^-{ (\rho_k)^* } & \mathcal{P}_{kN+k-1}Y \ar[d]^{(S(j))^*}\\
S(Z, k) \times I_{lM+l-1}\ar[r]_-{\rho_k \times \rho_l}  \ar@{.>}[rrru]^-{\overline{H}} &Z \times I_M \ar[r]_-{H}   &  Y  \ar[r]_-{ (\rho_k)^*  } &  \map_*( S(\{0, N\}, k), Y)    }
$$
Now use $i \colon  \{0, N\} \to S( \{0, N\}, k)$  just as in \corref{cor: TC fibration}.    Composing with $i^*\colon \map_*( S(\{0, N\}, k), Y) \to \map_*(\{0, N\}, Y) = Y$ on the bottom right-hand corner gives the result.
\end{proof}

\begin{remark}
Just as in \remref{rem: not surjective}, the evaluation map $\text{ev}_N\colon \mathcal{P}_NY \to Y$ will generally not be surjective: there will be points in $Y$ far enough away from the basepoint $y_0$ so that they cannot be reached by a path of length $N$.  However, for sufficiently large $k$, the map $(S(j))^* \colon \map_*( S(I_N, k), Y) \to \map_*( S(\{0, N\}, k), Y)
$ will be surjective as long as $Y$ is connected.
\end{remark}

\section{Covering Paths and Homotopies in the Diamond}\label{sec: Diamond Winding Number}

In this section we present  some results  that focus specifically on paths and loops in the Diamond  (see \propref{prop: diamond non-contractible}, which we will generalize below).  Although these results do not follow from our general results on cofibrations, they seem appropriate to include here as they deal with notions such as covering, path and homotopy lifting, and the winding number, in the digital setting.   Also, we will apply these results to calculate a certain invariant of the Diamond in the next section.  We make a further application of the results of this section in \cite{LOS19c}.

Covering spaces have appeared in the digital topology literature; our results here are similar in approach to results of 
\cite{Ha05, B-K10}, for instance.  Our results here do not follow from previous work, however, for the usual reasons: the general results of \cite{Ha05, B-K10} involve various choices of adjacency, whereas we use a fixed adjacency; the notion of homotopy we use here differs from that used in  \cite{B-K10}, for example.

Recall that the Diamond $D \subseteq \Z^2$ consists of the four points $\{(\pm 1, 0), (0, \pm1) \}$ with adjacencies determined as a digital image in $\Z^2$.  We think of the Diamond as  the prototypical digital circle.  We may equally well represent the points of the Diamond as complex numbers $\{ \pm1, \pm i\}$, and hence as $\{e^{k\pi i/2} \mid k = 0, 1, 2, 3\}$.  Then we have an adjacency preserving projection
$$p \colon \Z \to D,$$
defined by $p(n) =  e^{n\pi i/2}$. This restricts to give a map of digital images $p\colon [a, b] \to D$ for any interval $[a, b] \subseteq \Z$.  This projection is a digital version of the standard covering projection $\R \to S^1 \subseteq \C$ given by  $x \mapsto e^{2\pi x i}$  As we will see, the digital version shares some of the properties of its topological counterpart.

\begin{lemma}[path-lifting property]\label{lem: Diamond path lifting}
Suppose we are given a commutative diagram
$$\xymatrix{
\{0\} \ar[r]^-{f} \ar[d]_-{i}  & [-M, M] \ar[d]^-{p}\\
I_N \ar[r]_-{\alpha}  \ar@{.>}[ru]^-{\overline{\alpha}}  & D,    } $$
in which $p$ is the projection as above, and $M$ is sufficiently large such that we have $[f(0) -N, f(0)+N] \subseteq [-M, M]$.  Then there is a unique filler $\overline{\alpha} \colon I_N \to [-M, M]$, namely, a path of length $N$ in $[-M, M]$ that lifts $\alpha$ through $p$ and starts at $f(0) \in p^{-1}\big(\alpha(0)\big)$.
\end{lemma}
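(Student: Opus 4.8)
The plan is to construct $\overline{\alpha}$ by induction on $t \in I_N$, using the fact that $p$ admits unique local lifts. The essential observation is the following description of $p$ near any point of its domain: for any integer $m$, the restriction of $p$ to the three consecutive integers $\{m-1, m, m+1\}$ is injective, and its image is precisely the set of points of $D$ adjacent to $p(m)$. Indeed, $p(m\pm 1) = e^{(m\pm 1)\pi i/2}$ are the two neighbours of $p(m) = e^{m\pi i/2}$ on the Diamond, whereas the point antipodal to $p(m)$, namely $p(m\pm 2) = e^{(m\pm 2)\pi i/2}$, is not adjacent to it in $D$. Since $p$ has period $4$, we have $p(m-2) = p(m+2)$, so the three values $p(m-1), p(m), p(m+1)$ are distinct and exhaust the points adjacent to $p(m)$ (recall that in $D$ each point is adjacent to itself and its two neighbours, but not to its antipode).

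With this in hand, I would set $\overline{\alpha}(0) = f(0)$, which is forced by the requirement $\overline{\alpha}\circ i = f$ together with $p(f(0)) = \alpha(0)$. Assuming $\overline{\alpha}(t)$ has been defined with $p(\overline{\alpha}(t)) = \alpha(t)$ for some $0 \le t \le N-1$, continuity of $\alpha$ gives $\alpha(t) \sim_D \alpha(t+1)$, so $\alpha(t+1)$ is adjacent to $p(\overline{\alpha}(t))$. By the local property above, there is then a \emph{unique} element of $\{\overline{\alpha}(t)-1,\ \overline{\alpha}(t),\ \overline{\alpha}(t)+1\}$ that maps to $\alpha(t+1)$ under $p$, and I declare $\overline{\alpha}(t+1)$ to be this element. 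By construction $p(\overline{\alpha}(t+1)) = \alpha(t+1)$ and $|\overline{\alpha}(t+1) - \overline{\alpha}(t)| \le 1$, so that $\overline{\alpha}(t) \sim \overline{\alpha}(t+1)$; as these consecutive adjacencies (together with the trivial self-adjacencies) are the only ones to verify in $I_N$, this shows that $\overline{\alpha}$ is continuous and lifts $\alpha$ through $p$.

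It then remains to check that $\overline{\alpha}$ really lands in $[-M, M]$. Since $\overline{\alpha}(0) = f(0)$ and each step changes the value by at most $1$, a trivial induction gives $|\overline{\alpha}(t) - f(0)| \le t \le N$ for every $t \in I_N$, so $\overline{\alpha}(t) \in [f(0)-N,\, f(0)+N] \subseteq [-M, M]$ by the hypothesis on $M$. Thus $\overline{\alpha}$ is a well-defined path of length $N$ in $[-M, M]$, as required.

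Finally, uniqueness follows from the very same induction: any lift must satisfy $\overline{\alpha}(0) = f(0)$, and at each stage the continuity constraint forces $\overline{\alpha}(t+1) \in \{\overline{\alpha}(t)-1,\ \overline{\alpha}(t),\ \overline{\alpha}(t)+1\}$, after which the lifting constraint $p(\overline{\alpha}(t+1)) = \alpha(t+1)$ pins the value down uniquely. The one place where the specific geometry of the Diamond is essential---and the only genuine content beyond bookkeeping---is the local injectivity of $p$, which rests on antipodal points of $D$ failing to be adjacent. I expect this to be the crux: were $D$ replaced by a configuration in which some point were adjacent to all others, this local injectivity, and hence unique path lifting, would break down.
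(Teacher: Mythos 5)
Your proof is correct and follows essentially the same route as the paper's: an induction along the path in which the lift is constructed and shown unique simultaneously, with the range bound $[f(0)-N, f(0)+N]\subseteq[-M,M]$ guaranteeing the lift stays in the interval. Your ``local injectivity'' observation---that $p$ restricted to $\{m-1,m,m+1\}$ is a bijection onto the set of points of $D$ adjacent to $p(m)$---is just a cleaner packaging of the paper's explicit arithmetic with $\overline{\alpha}(k)=4q+r$ and $4Q+r+\epsilon\sim 4q+r\Rightarrow Q=q$, and you correctly identify the non-adjacency of antipodes as the crux.
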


\begin{proof}
It is intuitively clear that there is a lift if one pictures $\Z$ as embedded  as a ``helix" in $\Z^3$, with $n \mapsto\big( \text{re}( e^{n\pi i/2}), \text{im}( e^{n\pi i/2}), n\big)$, and $p$ just projection onto the first two coordinates (as the covering  $\R \to S^1$ is usually pictured).  The condition on $M$ is simply so that we have enough room to accomodate this obvious lift.  We will progressively construct this lift, and show it is unique at the same time.

We work by induction over the length of a lift.  The initial point of any lift is specified: $\overline{\alpha}(0) = f(0)$.  So, for $k$ with $0 \leq k \leq N-1$, inductively suppose we have defined $\overline{\alpha}(s)$ for $s = 0, \ldots, k$, so that $p\circ  \overline{\alpha}(s) = \alpha(s)$ for $s = 0, \ldots, k$ and furthermore, if  $\alpha' \colon I_k \to \Z$ is any other path of length $k$ that starts at $f(0)$ and lifts $\alpha|_{I_k}$, then $\alpha'(s) = \overline{\alpha}(s)$ for  $s = 0, \ldots, k$.    Suppose that $\alpha(k) =  e^{r\pi i/2}$ for $r \in \{ 0, 1, 2, 3\}$. Then $\overline{\alpha}(k) = 4q+r$ for some $q \in \Z$. Notice that continuity of $\overline{\alpha} |_{I_k}\colon I_k \to \Z$ implies that we have $-M < f(0)-k\leq 4q+r \leq f(0) + k<M$, so we still have room to extend $\overline{\alpha}$ in either direction.  Since $\alpha(k+1) \sim_D \alpha(k)$, we have $\alpha(k+1) =   e^{(r+\epsilon)\pi i/2}$, where $\epsilon \in \{\pm 1, 0\}$.  So extend $\overline{\alpha}$ to $\overline{\alpha}(k+1) = 4q+r + \epsilon$.  Then
$\overline{\alpha}(k+1) \sim_{[-M, M]} \overline{\alpha}(k)$, so $\overline{\alpha} |_{I_{k+1}}\colon I_{k+1} \to \Z$ is continuous and lifts $\alpha|_{I_{k+1}}$.  Furthermore,   if  $\alpha' \colon I_{k+1 }\to \Z$ is any other path of length $k+1$ that starts at $f(0)$ and lifts $\alpha|_{I_{k+1}}$, then we have by our inductive assumption of uniqueness that $\alpha'(k) = \overline{\alpha}(k)$, and thus we have $\alpha'(k+1) \sim_{[-M, M]} \alpha'(k) =  \overline{\alpha}(k) = 4q+r$.  Since $p\circ \alpha'(k+1) = \alpha(k+1)  = e^{(r+\epsilon)\pi i/2}$, we have $\alpha'(k+1) = 4Q + r + \epsilon$, for some $Q$.    But if $4Q + r + \epsilon \sim_{[-M, M]} 4q + r$, for some $\epsilon  \in \{\pm 1, 0\}$, we must have $Q = q$, and so $\alpha'(k+1) = \overline{\alpha}(k+1)$.  This completes the inductive step.  the result follows, by induction.
\end{proof}

\begin{corollary}[winding number for loops in $D$]\label{cor: winding number}
For each loop in $D$, that is, for each path $\alpha\colon I_N \to D$ with $\alpha(0) = \alpha(N)$, there is a well-defined integer $w(\alpha)$ (the \emph{winding number} of the path $\alpha$---in fact a multiple of $4$), given by
$w(\alpha) = \overline{\alpha}(N) - \overline{\alpha}(0)$, where $\overline{\alpha}$ is the unique lift of $\alpha$ guaranteed by \lemref{lem: Diamond path lifting} for any choice
of initial point $\overline{\alpha}(0)$.
\end{corollary}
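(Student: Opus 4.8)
The plan is to deduce everything from the uniqueness clause of \lemref{lem: Diamond path lifting} together with the periodicity of the projection $p$. First I would record the structure of the fibers of $p$: since $p(n) = e^{n\pi i/2}$, we have $p^{-1}(e^{r\pi i/2}) = \{ 4q + r \mid q \in \Z \}$ for each $r \in \{0,1,2,3\}$, so any two points lying over the same point of $D$ differ by a multiple of $4$. Because $\alpha$ is a loop, $\alpha(0) = \alpha(N)$, and hence both $\overline{\alpha}(0)$ and $\overline{\alpha}(N)$ lie in the single fiber $p^{-1}(\alpha(0))$; this already shows that $w(\alpha) = \overline{\alpha}(N) - \overline{\alpha}(0)$ is a well-defined integer, and in fact a multiple of $4$, for any one fixed lift.

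The substance of the statement is independence of the choice of initial point, and for this I would exploit the translation $\tau_c \colon \Z \to \Z$, $\tau_c(n) = n + 4c$, for $c \in \Z$. Since $e^{(n + 4c)\pi i/2} = e^{n\pi i/2}\, e^{2c\pi i} = e^{n\pi i/2}$, each $\tau_c$ is an adjacency-preserving map satisfying $p \circ \tau_c = p$; it plays the role of a deck transformation. Consequently, if $\overline{\alpha} \colon I_N \to [-M, M]$ is the lift of $\alpha$ starting at a chosen point $x_0 \in p^{-1}(\alpha(0))$, then $\tau_c \circ \overline{\alpha}$ is again a path of length $N$ lifting $\alpha$, now starting at $x_0 + 4c$. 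Enlarging $M$ if necessary so that both lifts land in $[-M, M]$, the uniqueness clause of \lemref{lem: Diamond path lifting} forces $\tau_c \circ \overline{\alpha}$ to be exactly the lift of $\alpha$ beginning at $x_0 + 4c$.

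Finally I would assemble these observations. Because the fiber $p^{-1}(\alpha(0))$ is precisely the $\tau_c$-orbit of $x_0$, every admissible choice of initial point has the form $x_0 + 4c$, and the corresponding lift is $\tau_c \circ \overline{\alpha}$. Evaluating the winding number on this lift gives $(\overline{\alpha}(N) + 4c) - (\overline{\alpha}(0) + 4c) = \overline{\alpha}(N) - \overline{\alpha}(0)$, so the value is unchanged. Hence $w(\alpha)$ depends only on $\alpha$, as claimed. The only mild subtlety I anticipate is bookkeeping with the bound $M$: one must choose $M$ uniformly large enough to house the finitely many lifts under comparison, but since each lift of a length-$N$ loop moves at most $N$ away from its starting fiber element, this is immediate and poses no real obstacle.
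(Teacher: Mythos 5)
Your proposal is correct and follows essentially the same route as the paper: both arguments translate the given lift by the (multiple-of-$4$) difference of initial points and invoke the uniqueness clause of the path-lifting lemma to identify the translate with the lift at the new basepoint, whence the difference $\overline{\alpha}(N) - \overline{\alpha}(0)$ is unchanged. Your ``deck transformation'' $\tau_c$ is exactly the paper's translated path $s \mapsto \overline{\alpha}(s) + m_0 - n_0$, and your explicit remarks on the fiber structure and on enlarging $M$ merely make precise details the paper's proof leaves implicit.
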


\begin{proof}
We need only check that, for different choices of initial point, the number stays the same.  So suppose that $\overline{\alpha}$ is the lift of $\alpha$ that starts at $\overline{\alpha}(0) = n_0$.  Suppose that $\alpha'$ is the lift of $\alpha$ that starts at $\alpha'(0) = m_0$.  Since $p(n_0) = p(m_0)$, these two initial points must differ by some multiple of $4$.  Then the path (in some suitably large interval) defined by $s \mapsto   \overline{\alpha}(s) + m_0 - n_0$ lifts $\alpha$ through $p$, and starts at $m_0$.  Therefore, by uniqueness, we must have
$\alpha'(s) = \overline{\alpha}(s) + m_0 - n_0$, and hence $\overline{\alpha}(N) - \overline{\alpha}(0) = \alpha'(N) - \alpha'(0)$: the value of $w(\alpha)$ is well-defined.
\end{proof}

\begin{lemma}[homotopy-lifting property]\label{lem: Diamond homotopy lifting}
Suppose given a commutative diagram
$$\xymatrix{
I_N \times \{0\} \ar[r]^-{\overline{\alpha}} \ar[d]_-{i}  & [-L, L] \ar[d]^-{p}\\
I_N  \times I_M \ar[r]_-{H}  \ar@{.>}[ru]^-{\overline{H}}  & D,    } $$
in which $p$ is the projection as above,       $H \colon I_N \times I_M \to D$ is a homotopy of paths of length $N$ in $D$ that starts at $H(s, 0) = \alpha(s)$, $\overline{\alpha} \colon I_N \to [-L, L]$ is the unique lift of $\alpha$ through $p$ for a given initial point $\overline{\alpha}(0)$,  and $L$ is sufficiently large such that we have $[\overline{\alpha}(0) -(N+M), \overline{\alpha}(0)+N+M] \subseteq [-L, L]$.  Then there is a unique filler $\overline{H} \colon I_N \times I_M \to [-L, L]$, namely, a homotopy  of length $M$ in $[-L, L]$ that lifts $H$ through $p$ and starts at $\overline{\alpha}$.
\end{lemma}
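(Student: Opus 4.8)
The plan is to construct $\overline{H}$ one ``column'' at a time and then verify continuity by induction on the homotopy coordinate. First, for each fixed $s \in I_N$, I would consider the path $\gamma_s \colon I_M \to D$ of length $M$ given by $\gamma_s(t) = H(s, t)$; this begins at $\gamma_s(0) = H(s, 0) = \alpha(s)$, and $\overline{\alpha}(s) \in p^{-1}\big(\alpha(s)\big)$. By the path-lifting property (\lemref{lem: Diamond path lifting}), applied with initial point $\overline{\alpha}(s)$, there is a unique lift $\overline{\gamma}_s \colon I_M \to [-L, L]$ of $\gamma_s$ through $p$ with $\overline{\gamma}_s(0) = \overline{\alpha}(s)$; the hypothesis on $L$ guarantees the room needed, since $|\overline{\alpha}(s) - \overline{\alpha}(0)| \leq N$ and a column lift moves by at most $M$. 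I would then simply set $\overline{H}(s, t) = \overline{\gamma}_s(t)$. By construction $p \circ \overline{H} = H$ and $\overline{H}(s, 0) = \overline{\alpha}(s)$, and uniqueness of $\overline{H}$ is immediate: any lift restricts on each column to a lift of $\gamma_s$ starting at $\overline{\alpha}(s)$, which is unique by \lemref{lem: Diamond path lifting}.

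The substance of the proof---and the main obstacle---is showing that this $\overline{H}$ is continuous as a map $I_N \times I_M \to [-L, L]$, i.e., that adjacencies \emph{between} neighbouring columns are preserved; continuity \emph{within} each column is automatic from the column lifts. The key elementary fact I would isolate first is a congruence property of $p$: for integers $m, n$ one has $p(m) \sim_D p(n)$ if and only if $m - n \not\equiv 2 \pmod 4$ (equivalently $m - n \equiv 0, \pm 1 \pmod 4$), which holds because the four points of $D$ form a $4$-cycle and $p(n)$ depends only on $n \bmod 4$. This lets me convert the known adjacencies of images under $H$ into sharp constraints on the differences of their lifts.

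I would then prove continuity by induction on $t$, with inductive hypothesis that $\overline{H}$ preserves all adjacencies among points with second coordinate $\leq t$. The base case $t = 0$ is just continuity of $\overline{\alpha}$. For the step, fix $s \sim_{I_N} s'$ and write $a = \overline{H}(s, t)$, $b = \overline{H}(s', t)$, $a' = \overline{H}(s, t+1)$, $b' = \overline{H}(s', t+1)$; column continuity gives $a' = a + \delta$ and $b' = b + \delta'$ with $\delta, \delta' \in \{-1, 0, 1\}$, and the inductive hypothesis gives $|a - b| \leq 1$. Since $(s, t+1) \sim (s', t)$ in $I_N \times I_M$ and $H$ is continuous, $p(a') \sim_D p(b)$, so $a' - b \not\equiv 2 \pmod 4$; but $|a' - b| \leq |a - b| + |\delta| \leq 2$, whence in fact $|a' - b| \leq 1$, establishing the cross adjacency. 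Bootstrapping, $a' - b' = (a' - b) - \delta'$ gives $|a' - b'| \leq 2$, while $p(a') \sim_D p(b')$ forces $a' - b' \not\equiv 2 \pmod 4$, so $|a' - b'| \leq 1$. These are exactly the two new adjacency types introduced at level $t+1$ (a point with second coordinate $t+1$ can only be adjacent to points with second coordinate $t$ or $t+1$), so the induction closes and $\overline{H}$ is continuous. I expect the congruence property of $p$ together with this two-stage ``rule out $\pm 2 \bmod 4$'' argument to be the crux; everything else is bookkeeping that parallels \lemref{lem: Diamond path lifting}.
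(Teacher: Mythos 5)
Your proof is correct, but it takes a genuinely different route from the paper's. The paper builds $\overline{H}$ row-by-row (in the homotopy direction) via a double induction: the first entry of each new row is forced by extending the lift of the path $t \mapsto H(0,t)$, the rest of the row by extending lifts along the row, with existence, uniqueness, and continuity all verified simultaneously through explicit bookkeeping---values are written as $4n + r + \epsilon(i,j)$ with $\epsilon(i,j) \in \{\pm 1, 0\}$ subject to adjacency constraints, and tables of cases show the integer parts $n$ must agree. You instead lift every column at once by direct appeal to \lemref{lem: Diamond path lifting}, which buys you existence of the function and uniqueness of any lift essentially for free (any lift restricts on each column to the unique column lift, exactly as you say), and then you isolate continuity as a separate induction on the homotopy coordinate. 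The engine of your continuity argument---the observation that $p(m) \sim_D p(n)$ if and only if $m - n \not\equiv 2 \pmod{4}$, combined with the bound $|a'-b| \leq 2$ to rule out a difference of $\pm 2$, first for the cross adjacency and then bootstrapped to the same-row adjacency---is a cleaner encapsulation of what the paper's $\epsilon$-tables accomplish case by case; it also correctly identifies the only new adjacency types appearing at each level under the strong product adjacency. Your check that the hypothesis on $L$ supplies the room needed for each column lift (since $|\overline{\alpha}(s) - \overline{\alpha}(0)| \leq N$ and a lift of length $M$ moves at most $M$) is exactly right. In short: your decomposition mirrors the classical covering-space argument more closely and is shorter, at the price of deferring continuity to a standalone lemma-plus-induction; the paper's sweep keeps everything inside one constructive induction but with heavier case analysis.
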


\begin{proof}
We follow a similar strategy to that of the proof of \lemref{lem: Diamond path lifting}, and show existence and uniqueness of the lift together.  We construct the (unique) lift $\overline{H} \colon I_N \times I_M \to [-L,L]$ horizontal row-wise ($M$ rows, each of length $N$), working inductively.  First, any lift is specified on the bottom row: we have $\overline{H}(s, 0) = \overline{\alpha}(s)$ for $s = 0, \ldots, N$.  If $H'\colon I_N \times I_M \to  [-L, L]$ is any other lift of $H$, then we must have   $H'(s, 0) = \overline{H}(s, 0) = \overline{\alpha}(s)$ for $s = 0, \ldots, N$.  This starts the induction.  Now assume inductively that we have constructed $\overline{H} \colon I_N \times I_l \to [-L, L]$ for some $l$ with $0 \leq l \leq M-1$, such that
$p\circ \overline{H} = H\colon I_N \times I_l \to D$, $\overline{H}(s, 0) = \overline{\alpha}(s)$ for $s = 0, \ldots, N$, and, furthermore, that if $H'$ is any other lift of $H$ on $I_N \times I_l$ that also satisfies $H'(s, 0) = \overline{\alpha}(s)$ for $s = 0, \ldots, N$, then we must have  $H'(s, t) = \overline{H}(s, t)$ for $(s, t) \in I_N \times I_l$.   The inductive step consists of extending the definition of a suitable $\overline{H}$ on the row $(s, l+1)$ for $s = 0, \ldots, N$.

Start by defining $\overline{H}(0, l+1)$.  Here, we have no choice, because $\overline{H}(0, t)$ is defined for $t = 0, \ldots l$, and is the unique lift of  the path $H(0, t)$ for  $t = 0, \ldots l$.  Thus $\overline{H}(0, l+1)$ must be defined so as to extend this lift to the unique lift of $H(0, t)$ for  $t = 0, \ldots l+1$.  This determines a value for $\overline{H}(0, l+1)$ that satisfies  $\overline{H}(0, l+1) \sim_{[-L, L]} \overline{H}(0, l)$.  Furthermore, notice that any other $H'$ that lifts $H$  and agrees with $\overline{H}$ on $I_N \times I_l$ (or even just $\{0\} \times I_l$) must satisfy $H'(0, l+1) = \overline{H}(0, l+1)$, since both lift the same path and start at the same initial point.  We claim that, not only do we have
$\overline{H}(0, l+1) \sim_{[-L, L]} \overline{H}(0, l)$, but also the adjacency $\overline{H}(0, l+1) \sim_{[-L, L]} \overline{H}(1, l)$.  For suppose that $H(0, l+1) = e^{ri\pi/2}$, for $r \in \{0, 1, 2, 3\}$.  Then $\overline{H}(0, l+1) = 4n + r$ for some $n$.    Since $(0, l+1)$, $(0, l)$, and $(1, l)$ are pair-wise adjacent in $I_N \times I_M$, we have
$$H(0, l) =  e^{\big(r+\epsilon(0, -1)\big)i\pi/2} \quad \text{and} \quad H(1, l) =  e^{\big(r+\epsilon(1, -1)\big)i\pi/2},$$
for  $\epsilon(0, -1), \epsilon(1, -1) \in \{ \pm1, 0\}$ that satisfy inequalities
$$|\epsilon(0, -1)| \leq 1, \quad |\epsilon(1, -1)| \leq 1, \quad \text{and} \quad |\epsilon(0, -1) - \epsilon(1, -1)| \leq 1.$$
It follows that
$$\overline{H}(0, l) =  4m + r+\epsilon(0, -1) \quad \text{and} \quad \overline{H}(1, l) =  4p+ r+\epsilon(1, -1),$$
for some $m$ and $p$, but the adjacency $\overline{H}(0, l+1) \sim_{[-L, L]} \overline{H}(0, l)$ implies that $n = m$, and the adjacency $\overline{H}(0, l) \sim_{[-L, L]} \overline{H}(0, l)$
 implies that $m = p$.  So we have
$$\overline{H}(0, l+1) =  4n + r, \quad  \overline{H}(0, l) =  4n + r+\epsilon(0, -1) \quad \text{and} \quad \overline{H}(1, l) =  4p+ r+\epsilon(1, -1),$$
with  the $\epsilon(i, j)$ satisfying  the relations above, which means that $\overline{H}(0, l+1)$, $\overline{H}(0, l)$, and $\overline{H}(1, l)$ are pair-wise adjacent in $[-L, L]$.

So far, we have extended the definition of $\overline{H}$ to $I_N \times I_l \cup \{ (0, l+1)\}$, and shown the extension is both unique and continuous on $I_N \times I_l \cup \{ (0, l+1)\}$.  Now we proceed in the same way along the row, using a secondary induction.  Suppose inductively that we have extended $\overline{H}$ to $I_N \times I_l \cup \{ (0, l+1), \dots, (u, l+1)\}$, for some $u$ with $0 \leq u \leq N-1$, in such a way that it is a continuous lift of $H$ on  $I_N \times I_l \cup \{ (0, l+1), \dots, (u, l+1)\}$ and furthermore, that if $H'$  is another lift of $H$ on $I_N \times I_l \cup \{ (0, l+1), \dots, (u, l+1)\}$ that agrees with $\overline{H}$ on $I_N \times I_l$, then $H' = \overline{H}$ on $I_N \times I_l \cup \{ (0, l+1), \dots, (u, l+1)\}$.  This secondary induction starts with $u = 0$, which is what we just showed.  For the inductive step, we define $\overline{H}$ on $(u+1, l+1)$, and show it is continuous and unique.  For the definition, we have no choice. For $\overline{H}(s, l+1)$ is defined for $s = 0, \ldots u$, and is the unique lift of  the path $H(s, l+1)$ for  $s = 0, \ldots u$.  Thus $\overline{H}(u+1, l+1)$ must be defined so as to extend this lift to the unique lift of $H(s, l+1)$ for  $s = 0, \ldots u+1$.  This determines a value for $\overline{H}(u+1, l+1)$ that satisfies  $\overline{H}(u+1, l+1) \sim_{[-L, L]} \overline{H}(u, l+1)$.  Furthermore, notice that any other $H'$ that lifts $H$  and agrees with $\overline{H}$ on $I_N \times I_l$ first, must satisfy $H'(0, l+1) = \overline{H}(0, l+1)$ by the first part of this argument, and hence second, must satisfy $H'(s, l+1) = \overline{H}(s, l+1)$, for $s = 0, \ldots, u+1$ since both lift the same path and start at the same initial point.  That is, $\overline{H}$ is the unique lift on $I_N \times I_l \cup \{ (0, l+1), \dots, (u+1, l+1)\}$.  To complete the inductive step, we must check that $\overline{H}$ is continuous on $I_N \times I_l \cup \{ (0, l+1), \dots, (u+1, l+1)\}$.  For this, suppose  that we have  $H(u, l+1) = e^{ri\pi/2}$, for some $r \in \{0, 1, 2, 3\}$.  Then $\overline{H}(u+1, l+1) = 4n + r$ for some $n$.    Now on those points $(s, t)$ in $I_N \times I_l \cup \{ (0, l+1), \dots, (u+1, l+1)\}$ adjacent to $(u+1, l+1)$, the continuity of $H$ means that we may display the values of $H$ in Table~\ref{tab:values of H} for  $\epsilon(i, j) \in \{ \pm1, 0\}$ that satisfy inequalities
$$|\epsilon(i, j)| \leq 1,  \quad \text{and} \quad |\epsilon(i, j) - \epsilon(i', j')| \leq 1,$$
whenever $(i, j) \sim_{\Z^2} (i', j')$.
\begin{table}
\begin{tabular}{c|c|c|c}
 &  $s = u$ & $s = u+1$ & $s = u+2$\\
  & & & \\
 \hline
  & & & \\
   $t = l+1$ & $e^{\big(r+\epsilon(-1, 0)\big)i\pi/2}$ & $H(u+1, l+1) = e^{ri\pi/2}$ & \\
  & & & \\
   \hline
     & & & \\
 $t = l$ & $e^{\big(r+\epsilon(-1, -1)\big)i\pi/2}$ & $e^{\big(r+\epsilon(0, -1)\big)i\pi/2}$ & $e^{\big(r+\epsilon(1, -1)\big)i\pi/2}$\\
   & & & \\
\end{tabular}
\caption{\label{tab:values of H}Values of $H(s, t)$ for $(s, t)$ adjacent to $(u+1, l+1)$ and in $I_N \times I_l \cup \{ (0, l+1), \dots, (u+1, l+1)\}$.}
\end{table}
Correspondingly, since $\overline{H}$ lifts $H$, we have values of $\overline{H}$ on the same points in $I_N \times I_l \cup \{ (0, l+1), \dots, (u+1, l+1)\}$ displayed in Table~\ref{tab:values of barH} for $n_{(i, j)} \in \Z$, and the same $\epsilon(i, j)$ that satisfy the identities above.

\begin{table}
\begin{tabular}{c|c|c|c}
 &  $s = u$ & $s = u+1$ & $s = u+2$\\
  & & & \\
 \hline
  & & & \\
 $t = l+1$ & $ 4 n_{(-1, 0)} +r +\epsilon(0, -1)$ & $\overline{H}(u+1, l+1) = 4n+r$ & \\
  & & & \\
 \hline
  & & & \\
 $t = l$ & $4 n_{(-1, -1)}+ r +\epsilon(-1, -1)$ & $4 n_{(0, -1)} + r +\epsilon(0, -1)$ & $4 n_{(1, -1)} + r + \epsilon(1, -1)$\\
  & & & \\
\end{tabular}
\caption{\label{tab:values of barH}Values of $\overline{H}(s, t)$ for $(s, t)$ adjacent to $(u+1, l+1)$ and in $I_N \times I_l \cup \{ (0, l+1), \dots, (u+1, l+1)\}$.}
\end{table}

Now we already said, above, that $\overline{H}(u+1, l+1) \sim_{[-L, L]} \overline{H}(u, l+1)$, from the way in which we defined $\overline{H}(u+1, l+1)$.  Therefore, we must have
 $n_{(-1, 0)} = n$.  Furthermore, $\overline{H}$ is continuous on $I_N \times I_l \cup \{ (0, l+1), \dots, (u, l+1)\}$, and hence we must have
 $n_{(-1, 0)} = n_{(-1, -1)} = n_{(0, -1)} =n_{(1, -1)}$, and so all of the $n(i, j) = n$.  Since the relations obeyed by the $\epsilon(i, j)$ are those for adjacency, it follows that
 $\overline{H}(u+1, l+1)$ is adjacent in $[-L, L]$ to each of the values $\overline{H}(s, t)$ displayed, and thus the extension of $\overline{H}$ to  $I_N \times I_l \cup \{ (0, l+1), \dots, (u+1, l+1)\}$ is continuous.  This completes the secondary inductive step, and so by the secondary induction, we have that $\overline{H}$ extends uniquely to a continuous lift of $H$ on $I_N \times I_{l+1}$.  In turn, this completes  the (primary) inductive step, and it follows by induction that $\overline{H}$ extends uniquely to a lift of $H$ on $I_N \times I_M$.
\end{proof}

\begin{proposition}\label{prop: loops in Diamond}
If two loops of length $N$ in the Diamond are homotopic, then they have the same winding number.  In particular, no loop in the Diamond with non-zero winding number is homotopic to a constant loop.
\end{proposition}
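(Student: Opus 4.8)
The plan is to lift a homotopy between the two loops to the covering $p\colon \Z \to D$ using \lemref{lem: Diamond homotopy lifting}, and then read off the two winding numbers from the left and right vertical edges of the lifted rectangle. Throughout I take ``homotopic'' in the sense of a homotopy \emph{through loops}, that is, a map $H\colon I_N\times I_M\to D$ with $H(-,0)=\alpha$, $H(-,M)=\beta$ and satisfying $H(0,t)=H(N,t)$ for every $t\in I_M$. This loop-preserving condition is essential and must be made explicit: a homotopy that lets the two endpoints move independently can change the winding number, since the domain $I_N$ is contractible and hence every path out of it is freely null-homotopic (for instance $\alpha\circ G$, where $G$ is the standard contraction of $I_N$, deforms any loop to a constant while moving its endpoint).

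First I would fix a lift $\overline{\alpha}\colon I_N\to[-L,L]$ of $\alpha=H(-,0)$ through $p$, with $L$ large enough, as provided by \lemref{lem: Diamond path lifting}. Feeding $\overline{\alpha}$ and $H$ into \lemref{lem: Diamond homotopy lifting} yields a unique lift $\overline{H}\colon I_N\times I_M\to[-L,L]$ of $H$ with $\overline{H}(-,0)=\overline{\alpha}$. Its top edge $\overline{\beta}:=\overline{H}(-,M)$ is then a lift of $\beta$, so by \corref{cor: winding number} I may compute
$$w(\alpha)=\overline{H}(N,0)-\overline{H}(0,0)\qquad\text{and}\qquad w(\beta)=\overline{H}(N,M)-\overline{H}(0,M).$$

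The heart of the argument is a comparison of the two vertical edges $L(t):=\overline{H}(0,t)$ and $R(t):=\overline{H}(N,t)$ of the lifted rectangle. These are lifts through $p$ of the paths $t\mapsto H(0,t)$ and $t\mapsto H(N,t)$; by the loop-preserving hypothesis these two paths in $D$ coincide. The integer $c:=R(0)-L(0)=\overline{\alpha}(N)-\overline{\alpha}(0)=w(\alpha)$ is a multiple of $4$, since $\alpha(0)=\alpha(N)$ and $p$ has period $4$; hence $L+c$ is again a lift of the common path $t\mapsto H(0,t)=H(N,t)$, and it starts at $L(0)+c=R(0)$. The uniqueness clause of \lemref{lem: Diamond path lifting} then forces $R=L+c$, so in particular $R(M)=L(M)+c$. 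Substituting into the formula above gives $w(\beta)=R(M)-L(M)=c=w(\alpha)$.

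I expect the main obstacle to be conceptual rather than computational: recognizing that the loop-preserving condition $H(0,t)=H(N,t)$ is exactly what the argument needs, and then seeing that the two vertical edges of the lift are \emph{parallel} (differing by the constant $c$) rather than equal — it is precisely this shift $c$ that records the common winding number. The final ``in particular'' is then immediate, since a constant loop lifts to a constant path and so has winding number $0$, whence no loop of nonzero winding number can be homotopic to it.
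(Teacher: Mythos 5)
Your proof is correct, and it takes a genuinely different route from the paper's. The paper's proof of \propref{prop: loops in Diamond} decomposes the $M$-stage homotopy into $1$-stage homotopies $\alpha \approx_1 \alpha_1 \approx_1 \cdots \approx_1 \alpha_M = \beta$ and argues locally: lifting a single $1$-stage homotopy via \lemref{lem: Diamond homotopy lifting} moves each lifted endpoint by some $\epsilon, \epsilon' \in \{0, \pm 1\}$, so the winding number changes by $\epsilon' - \epsilon$ with $|\epsilon' - \epsilon| \leq 2$; since winding numbers of loops are multiples of $4$, each step preserves the winding number, and one chains over the $M$ steps. You instead lift the whole homotopy at once and run the classical covering-space argument: the two vertical edges $L(t)$ and $R(t)$ of the lifted rectangle are lifts of the same path in $D$, so by the uniqueness clause of \lemref{lem: Diamond path lifting} they differ by the constant $c = w(\alpha) \in 4\Z$, whence $w(\beta) = R(M) - L(M) = c = w(\alpha)$. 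Your version buys a one-shot global argument; the paper's mod-$4$ rigidity argument buys something you cannot get from edge-comparison, namely the one-step statement: two \emph{loops} that are merely adjacent in $P_N D$ already have equal winding number, with no relation whatsoever imposed between $H(0,t)$ and $H(N,t)$.

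The most valuable feature of your write-up is the explicit hypothesis $H(0,t) = H(N,t)$, and you are right that it is essential rather than pedantic. The paper states the proposition for its general (free) notion of homotopy, and its chaining step silently requires every intermediate stage $\alpha_i = H(-, i)$ to be a loop---which is exactly your condition, and which a free homotopy need not satisfy. Indeed, as you observe, without it the statement is false: composing any loop $\alpha \colon I_N \to D$ with the contracting homotopy of $I_N$ from the paper's own example of interval contractibility gives a continuous free homotopy from $\alpha$ to a constant loop, under which the lifted displacement $\overline{\alpha}(N) - \overline{\alpha}(0)$ drifts by $1$ per stage through non-loop intermediates. The paper's sole application, \propref{prop: cat of D}, is unaffected because there the homotopy factors through a loop in $S(D,k)$, so every stage is automatically a loop---but your formulation is the one that makes the argument legitimate. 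Two small bookkeeping points: when you invoke uniqueness to conclude $R = L + c$, you should enlarge the ambient interval so that the translated lift $L + c$ remains inside it (the uniqueness argument of \lemref{lem: Diamond path lifting} is insensitive to this enlargement), and you should avoid the notational clash between the interval bound $L$ in $[-L, L]$ and the edge path $L(t)$.
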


\begin{proof}
Suppose that $\alpha, \beta \colon I_N \to D$ are homotopic by an $M$-stage homotopy.  Then the $M$-stage homotopy $H\colon I_N \times I_M \to D$ may be regarded as a succession of $1$-stage homotopies (or, adjacencies in the path space $P_ND$), thus: $\alpha \approx_1 \alpha_1 \approx \cdots \approx_1 \alpha_M = \beta$, with $\alpha_i(s) = H(s, i)$, for $i = 1, \ldots M$.    So, without loss of generality, suppose that $\alpha$ and $\beta$ are $1$-homotopic (adjacent in the path space $P_ND$).   Then the $1$-homotopy
$H\colon  I_N \times I_1 \to D$ from $\alpha$ to $\beta$ lifts, as in \lemref{lem: Diamond homotopy lifting}, to a $1$-homotopy $\overline{H} \colon I_N \times I_1 \to [-L, L]$ from $\overline{\alpha}$ to $\overline{\beta}$.  Then we have $\overline{\alpha}(0) = \overline{H}(0, 0) \sim_{[-L, L]} \overline{H}(0, 1) = \overline{\beta}(0)$, and $\overline{\alpha}(N) = \overline{H}(N, 0) \sim_{[-L, L]} \overline{H}(N, 1) = \overline{\beta}(N)$.  This means that we have $\overline{\beta}(0) = \overline{\alpha}(0) + \epsilon$ and $\overline{\beta}(N) = \overline{\alpha}(N) + \epsilon'$, with $\epsilon, \epsilon' \in \{\pm1, 0\}$.  Then $w(\beta) = w(\alpha) + (\epsilon' - \epsilon)$ but, since the winding number of a loop must be a multiple of $4$, and $|(\epsilon' - \epsilon)| \leq 2$, we must have $w(\beta) = w(\alpha)$.  In other words, adjacent loops in $P_ND$ have the same winding number.  Hence, homotopic loops in  $P_ND$ have the same winding number.

A constant loop in $D$ is lifted through $p$ to a constant path, with winding number zero.  The last assertion follows.
 \end{proof}

\section{Digital Category}\label{sec: cat and TC}

We indicate how other homotopy-theoretic notions may be  developed in the digital setting.  Here, we build on the ideas so far, to give a preliminary treatment of  Lusternik-Schnirelmann category.  The sequence of definitions and results presented here follows, \emph{mutatis mutandis}, a typical presentation of these ideas in the topological setting.   On the digital side, all the main notions introduced in this section incorporate subdivision in a basic way.  This is consistent with our philosophy that, if we are to have an interesting homotopy theory in the digital setting, then we need to use subdivision when adapting notions from the topological setting into the digital.
In fact, the notion of Lusternik-Schnirelmann category has appeared in the digital literature previously (see \cite{Bo-Ve18}).  But the approach of \cite{Bo-Ve18} is to translate the topological notion directly into the digital setting, and the common drawbacks of such an approach are apparent there:  it is hard to make use of invariance under homotopy equivalence, since digital images are rarely homotopy equivalent; the rigidity of the invariant is such that general results are hard to obtain.

We first give a less rigid version of contractibility than that of \defref{def:contractible}.

\begin{definition}\label{def: sub contract}
We say that $X$ is \emph{subdivision-contractible} if, for some subdivision $S(X, k)$ of $X$, and some $x_0 \in X$, and some $N$, we have a homotopy $H\colon  S(X, k) \times I_N \to  X$  with $H(x, 0) = \rho_k(x)$, and $H(x, N) = x_0$.
\end{definition}

In the ordinary, topological setting, Lusternik-Schnirelmann category is a numerical homotopy invariant  that plays a prominent role in many questions concerning dynamics and smooth functions on manifolds and is a well-known topic in homotopy theory (see \cite{CLOT}).  For a topological space $X$, it is a natural number denoted by $\cat(X)$ that may be defined  as one less than the minimum number of sets in a covering of $X$ by open sets, each of which is contractible in $X$. Thus it may be viewed as an index of how complicated $X$ is, since it corresponds to the smallest number of ``simple pieces" that $X$ may be assembled from.

We will adapt this covering definition to give a digital version of  Lusternik-Schnirelmann category and show that it is a numerical homotopy invariant at least for 2D digital images (see \thmref{thm: 2D dcat well-def}). This fact allows us to tell digital images apart up to homotopy in the sense that if $\cat(X) \not= \cat(Y)$,  then $X$ and $Y$ cannot be homotopy equivalent. Furthermore, the notion provides insight into how a digital image may be decomposed into simpler pieces, in a way that could be useful for various kinds of construction.

\begin{definition}\label{def: U contractible in X}
Let $i \colon U \to X$ be an inclusion of digital images.

We say that $U$ is \emph{categorical in $X$} if, for some $x_0 \in X$, and some $N$,  we have a homotopy
$H\colon  U \times I_N \to  X$  with $H(u, 0) = x_0$, and $H(u, N) = i(u)$, for all $u \in U$.

We say that $U$ is \emph{subdivision-categorical in $X$} if, for some subdivision $S(U, k)$ of $U$, some $x_0 \in X$, and some $N$,  we have a homotopy
$H\colon  S(U, k) \times I_N \to  X$  with $H(u', 0) = x_0$, and $H(u', N) = i\circ \rho_k(u')$, for all $u' \in S(U, k)$.
\end{definition}

\begin{example}\label{ex: punctured sphere}
Imagine a (topological) sphere with the north pole deleted.  This is (topologically) contractible.  However, a contracting homotopy---contracting everything to the south pole, say---would need to enlarge ``parallels'' of latitude from the northern hemisphere to pass over the equator, and then shrink them to a point.  Digitally, a homotopy cannot enlarge a circle, but if we allow for a subdivision, then we may enlarge.  So a suitable digital analogue of this situation provides an example of a digital image $X$ (a digital sphere with north pole removed) that is subdivision-contractible but not contractible.  Furthermore, in the same example, the tropic of Cancer (``parallel'' of latitude at approx.~$23.5^\circ$ north), say,  is (topologically) contractible in the deleted sphere.  However, a homotopy that contracts---contracting the tropic of Cancer to the south pole in the deleted sphere, say---would need to enlarge, and then shrink it to a point.   So a suitable digital analogue of this situation also provides an example of a subset  $U$ (a digital tropic of Cancer) that is subdivision-categorical, yet not categorical, in $X$.
\end{example}

\begin{definition}[Digital Category]\label{def: d-cat}
The \emph{digital category of $X$}, denoted by $\dcat(X)$,  is the smallest $n\geq 0$ for which there is a covering of $X$ by $n+1$ subsets that are subdivision-categorical in $X$.  Note that, since we consider only finite digital images, we always have a (finite) value for $\dcat(X)$.
\end{definition}

\begin{example}
If $X$ is contractible, then $X$ is subdivision-contractible and we have $\dcat(X)= 0$.
If $\dcat(X)= 0$, then $X$ is subdivision-contractible, tautologically from the definition (but generally not contractible---see \exref{ex: punctured sphere}).
\end{example}

\begin{proposition}\label{prop: cat of D}
The Diamond $D$ has $\dcat(D) = 1$.
\end{proposition}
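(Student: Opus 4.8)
The plan is to prove the two inequalities $\dcat(D) \le 1$ and $\dcat(D) \ge 1$ separately. For the upper bound I would exhibit an explicit cover of $D$ by two subdivision-categorical subsets. Take the two overlapping arcs $U_1 = \{(1,0),(0,1),(-1,0)\}$ and $U_2 = \{(-1,0),(0,-1),(1,0)\}$, whose union is $D$. Each $U_i$ is isomorphic as a digital image to the interval $I_2$ (consecutive vertices are adjacent, the two extreme vertices are not), hence contractible in itself. Composing such a contraction with the inclusion $U_i \hookrightarrow D$ shows each $U_i$ is categorical in $D$ in the sense of \defref{def: U contractible in X}, and precomposing with $\rho_k$ (any $k \ge 2$) upgrades this to subdivision-categorical. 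Thus $D$ is covered by two subdivision-categorical sets, giving $\dcat(D) \le 1$.

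The substance of the proof is the lower bound $\dcat(D) \ge 1$, i.e. that $D$ cannot be covered by a single subdivision-categorical subset. The only such cover would be $D$ itself, so I must rule out that $D$ is subdivision-categorical in itself. Since homotopy is symmetric, this is equivalent to $D$ being subdivision-contractible in the sense of \defref{def: sub contract}; equivalently, that for some $k \ge 2$ the projection $\rho_k \colon S(D,k) \to D$ is homotopic to a constant map. I would argue by contradiction using the winding number. First I would describe the structure of $S(D,k)$ as four $k\times k$ blocks $B_{(1,0)}, B_{(0,1)}, B_{(-1,0)}, B_{(0,-1)}$, one over each vertex of $D$, meeting in cyclic order at corner adjacencies such as $(k,k-1) \sim (k-1,k)$, and construct an explicit loop $\gamma \colon I_L \to S(D,k)$ that runs once around this ring. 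Its projection $\rho_k \circ \gamma$ then visits $(1,0),(0,1),(-1,0),(0,-1)$ in cyclic order, so by \corref{cor: winding number} it is a loop in $D$ of winding number $4 \ne 0$.

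Now suppose $H \colon S(D,k)\times I_N \to D$ is a homotopy with $H(-,0) = \rho_k$ and $H(-,N)$ constant. Then $H \circ (\gamma \times \mathrm{id}_{I_N}) \colon I_L \times I_N \to D$ is a homotopy of loops of length $L$ from $\rho_k \circ \gamma$ to a constant loop. By \propref{prop: loops in Diamond}, homotopic loops have equal winding number, forcing $4 = 0$, a contradiction. Hence no such $H$ exists and $\dcat(D) \ge 1$, so $\dcat(D) = 1$.

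The main obstacle, and the reason the earlier \propref{prop: diamond non-contractible} does not already suffice, is precisely the subdivision built into \defref{def: sub contract}: a contracting homotopy is now allowed to start on the enlarged image $S(D,k)$ rather than on $D$ itself. The crucial point to get right is therefore the construction of $\gamma$ and the verification that $\rho_k \circ \gamma$ genuinely winds around $D$; once this is in place, the homotopy invariance of the winding number (\propref{prop: loops in Diamond}, resting on the lifting lemmas \lemref{lem: Diamond path lifting} and \lemref{lem: Diamond homotopy lifting}) transfers the non-contractibility obstruction through the subdivision with no further difficulty. I would take care to confirm that both loops in the application have the same length $L$, as required by \propref{prop: loops in Diamond}, and that $\gamma \times \mathrm{id}$ is adjacency-preserving so that the composite homotopy is continuous.
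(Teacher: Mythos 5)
Your proposal is correct and matches the paper's own proof essentially step for step: your cover $\{U_1, U_2\}$ is exactly the paper's cover $\{D-\{p\},\, D-\{\overline{p}\}\}$ by two contractible arcs, and your lower bound is the same argument, producing a loop running once around $S(D,k)$, projecting it via $\rho_k$ to a loop of nonzero winding number, and invoking the homotopy invariance of the winding number to rule out subdivision-contractibility. The only divergence is cosmetic (you correctly record the winding number as $4$, consistent with the corollary's ``multiple of $4$'' normalization, where the paper's proof says $1$), and your explicit reduction from categorical to subdivision-categorical via precomposition with $\rho_k$ is a point the paper leaves implicit.
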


\begin{proof}
The diamond $D$ can  be covered by two subdivision-contractible sets $\{D-\{p\}, \{p\}\}$, or $\{D-\{p\}, D-\{\overline{p}\}\}$ for any point $p\in D$ and $\overline{p}$ the ``antipode" of $p$.  Indeed, in either case, the two sets are actually contractible.  This gives $\dcat(D) \leq 1$.  On the other hand, we claim that the Diamond is not subdivision-contractible.  For suppose we had some subdivision $S(D, k)$ and a contracting homotopy $H\colon  S(D, k) \times I_N \to  D$  with $H(x, 0) = \rho_k(x)$, and $H(x, N) = x_0$.  Then the ``innermost" points in the subdivision give a loop in $D$.  Specifically, the original four points of $D$ will correspond to $(\pm k, 0), (0, \pm k)$, and the four segments
$\{ (k, t) | t = 0, \dots, k-1\}  \cup \{ (k-1-s, k) | s = 0, \dots, k-1\} \cup \{ (-1, k-1-t) | t = 0, \dots, k-1\} \cup \{ (s, -1) | s = 0, \dots, k-1\} \subseteq S(D, k)$ give a loop of length $4k$, $\alpha\colon I_{4k} \to D$, in $S(D, k)$.  But composing with $\rho_k\colon S(D, k) \to D$ gives a loop in $D$, and the composition
$$H\circ(i\times\text{id})\colon I_{4k} \times I_N \to D \times I_N \to D$$
is a contracting homotopy that starts at the loop $\rho_k\circ\alpha$ and ends at a constant loop.  The loop $\rho_k\circ\alpha$ clearly has winding number, as we defined it in
\corref{cor: winding number}, of $1$.  Whereas a constant loop in $D$ has winding number of $0$.  These two loops in $D$ cannot be homotopic, by \propref{prop: loops in Diamond}.  This contradicts the assumption of a subdivision-contracting homotopy of $D$.  So $D$ is not subdivision-contractible and we have  $\dcat(D) \geq 1$.  Thus $\dcat(D) =1$.
\end{proof}

\begin{proposition}\label{prop: categorical = secategorical}
Let $i\colon U \to X$ be an inclusion of  digital images.  The following are equivalent.
\begin{enumerate}
\item $U$ is subdivision-categorical in $X$.
\item There is some subdivision $S(U, k)$ and some $N$, for which there exists a filler $\sigma\colon S(U, k) \to \mathcal{P}_{N} X$ in the following commutative diagram:
$$\xymatrix{  &  \mathcal{P}_{N} X \ar[d]^{\text{ev}_N} \\
S(U, k) \ar@{.>}[ru]^-{\sigma} \ar[r]_-{i\circ \rho_k} & X.}$$
\end{enumerate}
\end{proposition}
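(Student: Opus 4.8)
The plan is to recognize both conditions as the two sides of the exponential adjunction of \propref{prop: exponential law}, with the ``based'' condition on the path space corresponding precisely to the fixed initial point of the contracting homotopy. The first step I would take is to fix the basepoint of $X$ to be the point $x_0 \in X$ appearing in \defref{def: U contractible in X}; this identification is what allows the based path space $\mathcal{P}_N X$ of \defref{def: based path fibration} to enter the picture, matching its basepoint $y_0$ with $x_0$. With that convention in place, the equivalence is a formal consequence of taking adjoints, and one checks that the subdivision parameter $k$ and the length $N$ may be taken to be the same on both sides.

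For (1) $\implies$ (2): given a homotopy $H \colon S(U,k) \times I_N \to X$ with $H(u',0) = x_0$ and $H(u', N) = i\circ\rho_k(u')$, I would form its adjoint $\sigma = \widehat{H}\colon S(U,k) \to P_N X$, defined by $\sigma(u')(t) = H(u', t)$. By \propref{prop: exponential law}, $\sigma$ is continuous because $H$ is. The condition $H(u',0) = x_0$ says $\sigma(u')(0) = x_0$ for every $u'$, so $\sigma$ in fact takes values in the based path space $\mathcal{P}_N X$. Finally $\text{ev}_N \circ \sigma(u') = \sigma(u')(N) = H(u', N) = i\circ \rho_k(u')$, which is exactly commutativity of the triangle, so $\sigma$ is the desired filler.

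For (2) $\implies$ (1): conversely, given a filler $\sigma\colon S(U,k) \to \mathcal{P}_N X$ with $\text{ev}_N \circ \sigma = i\circ\rho_k$, I would define $H\colon S(U,k) \times I_N \to X$ to be its adjoint, $H(u', t) = \sigma(u')(t)$. Again \propref{prop: exponential law} guarantees $H$ is continuous. Since each $\sigma(u')$ lies in $\mathcal{P}_N X$ it starts at the basepoint, so $H(u', 0) = \sigma(u')(0) = x_0$, and commutativity gives $H(u', N) = \sigma(u')(N) = i\circ\rho_k(u')$. Thus $H$ witnesses that $U$ is subdivision-categorical in $X$, with the same $k$ and $N$.

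Since the whole argument is a direct application of the exponential correspondence, I do not anticipate a genuine obstacle. The only point needing a moment's care is the bookkeeping of the basepoint: the ``some $x_0$'' quantifier in the definition of subdivision-categorical must be matched with the basepoint implicit in $\mathcal{P}_N X$, and one should note explicitly that the two existential statements ``for some $S(U,k)$ and some $N$'' correspond exactly under the adjunction, so that nothing is lost or gained in passing between the homotopy and the filler.
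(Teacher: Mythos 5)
Your proof is correct, and for the direction (1) $\implies$ (2) it takes a genuinely different---and more elementary---route than the paper. You exploit the fact that \defref{def: U contractible in X} is deliberately stated with the homotopy running \emph{from} the constant map at $x_0$ \emph{to} $i\circ\rho_k$, so that its adjoint under \propref{prop: exponential law} lands directly in the based path space $\mathcal{P}_N X$ (with basepoint $x_0$, as you correctly note must be fixed) and satisfies $\mathrm{ev}_N\circ\sigma = i\circ\rho_k$; this keeps the same $k$ and $N$ on both sides and imposes no condition $N \geq 2$. The paper instead proves (1) $\implies$ (2) by invoking the homotopy lifting machinery of Section 5: it treats the contracting homotopy $H\colon S(U,L)\times I_M \to X$ as a homotopy to be lifted through the based path fibration $\mathrm{ev}_n\colon \mathcal{P}_n X \to X$ (taking $n=2$, whence the $N\geq 2$ hypothesis of \corref{cor: based path fibration}), starting from the constant-path map $f(u',0)=c_{x_0}$; the corollary produces a lift $\overline{H}$ only after a further subdivision $S(U,LK)$ of the domain and a lengthening of the paths to $N = Kn+K-1$, and $\sigma$ is then defined by evaluating $\overline{H}$ at the terminal time $lM+l-1$. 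Your argument is sharper---no extra subdivision, no change of $N$, and it exhibits the equivalence as purely formal, which matches the paper's own proof of the converse direction (2) $\implies$ (1) exactly (both take the adjoint of $\sigma$ and read off $H(u',0)=x_0$ and $H(u',N)=i\circ\rho_k(u')$). What the paper's heavier route buys is a demonstration of the fibration results in action---the introduction explicitly frames \corref{cor: d-cat = d-secat} as an application of \secref{sec: fibrations}---and a template that would survive in situations where the map to be sectioned is not an evaluation map whose sections are literally adjoint to the given homotopy (e.g., prospective digital analogues of Ganea-type fibrations), whereas your shortcut is special to $\mathrm{ev}_N$.
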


\begin{proof}
(1) $\implies$ (2): Suppose we have a homotopy $H \colon S(U, L) \times I_M \to X$ that contracts $S(U, L)$ to $x_0$ in $X$,  in the sense of  \defref{def: U contractible in X}.   Then we have a commutative diagram
$$\xymatrix{
S(U, L) \times \{0\} \ar[r]^-{f} \ar[d]_-{\mathrm{id} \times i}  & \mathcal{P}_n X\ar[d]^-{\text{ev}_n}\\
S(U, L) \times I_M \ar[r]_-{H}  & X,    } $$
in which $f(u', 0) = c_{x_0}$, the constant path at the point $x_0 \in X$, for each $u' \in S(U, L)$.
Notice that, WLOG, we may suppose  that $n \geq 2$ (indeed, we may take $n=2$ for our purposes). So from \corref{cor: based path fibration}, we have
subdivisions $S\big( S(U, L), K\big) = S(U, k)$ with $k = LK$, $S(I_n, K) = I_{N}$ with $N = Kn+K-1$, and $S(I_M, l)= I_{lM+l-1}$, and  a filler $\overline{H}\colon S(U, k) \times I_{lM+l-1} \to \mathcal{P}_{N}X$ in the following commutative diagram:
$$
\xymatrix{
S(U, k) \times \{0\} \ar[d]_{\mathrm{id}\times i} \ar[r]^-{\rho_k\times \mathrm{id}} & S(U, L) \times \{0\} \ar[r]^-{f} \ar[d]_-{\mathrm{id}\times i}  &  \mathcal{P}_n X \ar[d]^-{\text{ev}_n} \ar[r]^-{ (\rho_k)^* } & \mathcal{P}_{N} X\ar[d]^{\text{ev}_N}\\
S(U, k) \times I_{lM+l-1}\ar[r]_-{\rho_K \times \rho_l}  \ar@{.>}[rrru]_-{\overline{H}} &S(U, L) \times I_M \ar[r]_-{H}   &  X  \ar@{=}[r]  & X   }
$$
So define $\sigma\colon S(U, k) \to  \mathcal{P}_{N} X$ by $\sigma(u') = \overline{H}(u', lM+l-1)$ for $u' \in S(U, k)$, which clearly gives a continuous map.  Then we have $\text{ev}_N\circ \sigma(u') = H\circ (\rho_K \times \rho_l)(u', lM+l-1) = H\big( \rho_K(u'), M  \big) =  i\circ\rho_L\circ \rho_K(u') =  i\circ\rho_k(u') $, as required.

(2) $\implies$ (1): Given $\sigma\colon S(U, k) \to \map_*(I_N, X)$ as in the diagram, define a homotopy $H \colon S(U, k) \times I_N \to X$ by $H(u', t) = \sigma(u')(t)$.  Then $H$ is continuous by \propref{prop: exponential law}.  Furthermore, we check directly that
$H(u', 0) = \sigma(u')(0) = x_0$---since $\sigma$ maps to the based path space, and $H(u, N) = \text{ev}_N\circ\sigma(u') = i\circ\rho_k(u')$.
\end{proof}

\begin{corollary}\label{cor: d-cat = d-secat}
Let $X$ be a digital image.  Then $\dcat(X)$ equals the smallest $n\geq 0$ for which there is a covering of $X$ by $n+1$ subsets $U_0, \ldots, U_n$, for each of which there is some subdivision $S(U_i, k_i)$ and some $N_i$, for which there exists a filler $\sigma_i\colon S(U_i, k_i) \to \mathcal{P}_{N_i} X$ in the following commutative diagram:
$$\xymatrix{  &  \mathcal{P}_{N_i} X \ar[d]^{\text{ev}_{N_i} } \\
S(U_i, k_i) \ar@{.>}[ru]^-{\sigma_i} \ar[r]_-{i\circ \rho_k} & X.}$$
\end{corollary}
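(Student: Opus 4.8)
The plan is to observe that this is essentially an immediate reformulation of \defref{def: d-cat}, using the equivalence established in \propref{prop: categorical = secategorical} and applying it separately to each member of a covering. So the work is almost entirely bookkeeping, with all the real content already carried by the preceding proposition.

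First I would recall that, by \defref{def: d-cat}, $\dcat(X)$ is by definition the smallest $n \geq 0$ for which $X$ admits a covering by $n+1$ subsets $U_0, \ldots, U_n$, each of which is subdivision-categorical in $X$. The quantity described in the enunciation is, likewise, the smallest $n \geq 0$ for which $X$ admits a covering by $n+1$ subsets, each of which carries a filler $\sigma_i$ of the indicated evaluation diagram. Thus the corollary amounts to the claim that these two minima, taken over coverings of the same cardinality, coincide, and to prove it I need only match up the two families of admissible coverings.

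Next I would apply \propref{prop: categorical = secategorical} to each subset of a covering individually. For a fixed inclusion $i \colon U_i \to X$, that proposition asserts that $U_i$ is subdivision-categorical in $X$ if and only if there is some subdivision $S(U_i, k_i)$ and some $N_i$ admitting a filler $\sigma_i \colon S(U_i, k_i) \to \mathcal{P}_{N_i} X$ with $\text{ev}_{N_i}\circ \sigma_i = i\circ \rho_{k_i}$. The one point worth flagging is that the equivalence is proved one subset at a time, so the subdivision index $k_i$ and the path-length $N_i$ may be chosen independently for each $U_i$—exactly the freedom that the statement of the corollary allows. Consequently a covering $U_0, \ldots, U_n$ consists of subdivision-categorical subsets precisely when each of its members $U_i$ admits such a filler.

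Since the two conditions on a covering are thereby equivalent member-by-member, the families of admissible coverings of each fixed size $n+1$ are literally the same, and hence the two minima agree, which is the asserted equality. I do not expect any genuine obstacle here: the entire substance lives in \propref{prop: categorical = secategorical}, and this corollary merely repackages that per-subset equivalence as a statement about the numerical invariant $\dcat(X)$.
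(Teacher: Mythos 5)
Your proposal is correct and matches the paper's proof exactly: the paper also notes the corollary is immediate from \propref{prop: categorical = secategorical} applied member-by-member together with \defref{def: d-cat}. Your added observation that $k_i$ and $N_i$ may be chosen independently for each $U_i$ is a harmless (and accurate) elaboration of the same argument.
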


\begin{proof}
This is immediate from \propref{prop: categorical = secategorical} and \defref{def: d-cat}.
\end{proof}

\begin{remark}
The reader familiar with Lusternik-Schnirelmann category and surrounding topics will recognize in the above a nascent notion of \emph{sectional category} in the digital setting.  We avoid attempting a general definition of sectional category here, since we do not really need the general notion for our immediate purposes and furthermore, we do not yet have a general definition of fibration.
\end{remark}

Using a result from \cite{LOS19b}, we can establish that $\dcat$ is an invariant of homotopy type amongst two-dimensional (2D) digital images.

\begin{theorem}\label{thm: 2D dcat well-def}  Suppose $X \subseteq \Z^2$ is a 2D digital image and that we have maps $f\colon X \to Y$ and $g\colon Y \to X$ with a homotopy $H\colon X \times I_M\to X$ from $\text{id}_X$ to $g\circ f$.    Then $\dcat(X) \leq \dcat(Y)$.  If $X$ and $Y$ are both 2D and homotopy equivalent, then we have $\dcat(X) = \dcat(Y)$.
\end{theorem}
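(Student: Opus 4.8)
The plan is to prove the inequality $\dcat(X)\le\dcat(Y)$ under the domination hypothesis, and then obtain the equality for homotopy equivalent $2$D images by applying that inequality twice: once to the homotopy $g\circ f\approx\mathrm{id}_X$ to get $\dcat(X)\le\dcat(Y)$, and once to $f\circ g\approx\mathrm{id}_Y$ (this application requires $Y$ to be $2$D, as is assumed) to get $\dcat(Y)\le\dcat(X)$. So everything reduces to the first assertion. To prove it, I would write $n=\dcat(Y)$ and fix a cover $Y=V_0\cup\cdots\cup V_n$ by subsets $V_i$ each subdivision-categorical in $Y$, with contracting homotopies $G_i\colon S(V_i,k_i)\times I_{N_i}\to Y$ as in \defref{def: U contractible in X}. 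Then set $U_i=f^{-1}(V_i)\subseteq X$; since $f$ is a function and the $V_i$ cover $Y$, the $U_i$ cover $X$, so it suffices to show each $U_i$ is subdivision-categorical in $X$. The topological model is the standard fact that a dominated space has category at most that of its dominator: the inclusion $V_i\hookrightarrow Y$ being nullhomotopic forces $f|_{U_i}$, and hence, using $g\circ f\approx\mathrm{id}_X$, the inclusion $U_i\hookrightarrow X$, to be nullhomotopic. The entire task is to carry this out \emph{digitally}, respecting subdivisions at every stage.

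The main obstacle, and the reason for the $2$D hypothesis, is that a general digital map need not induce a map of subdivisions. Here I would invoke the result of \cite{LOS19b} on subdivision of maps: because $X$, and hence each $U_i$, is $2$-dimensional, there are subdivisions and a map $\widetilde{f}_i\colon S(U_i,m)\to S(Y,\ell)$ covering $f|_{U_i}$, meaning $\rho_\ell\circ\widetilde{f}_i=f\circ\rho_m$. Since $f(U_i)\subseteq V_i$, for each $u'\in S(U_i,m)$ we have $\rho_\ell(\widetilde f_i(u'))=f(\rho_m(u'))\in V_i$, so $\widetilde f_i$ in fact takes values in $S(V_i,\ell)=\rho_\ell^{-1}(V_i)$. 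A subdivision-categorical set remains subdivision-categorical under further subdivision, since precomposing $G_i$ with a projection $\rho_p$ uses $\rho_{k_i}\circ\rho_p=\rho_{k_i p}$ to adjust the terminal value; so after passing to a common refinement I may assume $\ell=k_i$, giving a map $\widetilde f_i\colon S(U_i,m)\to S(V_i,k_i)$ landing in precisely the subdivision on which $G_i$ is defined.

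With $\widetilde f_i$ in hand the remaining steps are routine assemblies of homotopies. First, $G_i\circ(\widetilde f_i\times\mathrm{id})\colon S(U_i,m)\times I_{N_i}\to Y$ is a homotopy from the constant map $y_0$ to $\rho_{k_i}\circ\widetilde f_i=f\circ\rho_m$ (viewed in $Y$); postcomposing with $g$ yields a homotopy in $X$ from the constant map $x_0:=g(y_0)$ to $g\circ f\circ\rho_m$. Second, restricting the given $H\colon X\times I_M\to X$ along $U_i$ and precomposing with $\rho_m$ gives $H\circ(\rho_m\times\mathrm{id})\colon S(U_i,m)\times I_M\to X$, a homotopy from $\iota_{U_i}\circ\rho_m$ to $g\circ f\circ\iota_{U_i}\circ\rho_m=g\circ f\circ\rho_m$. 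Splicing the first ($g$-image) homotopy, running from $x_0$ to $g\circ f\circ\rho_m$, with the reverse of the second, running from $g\circ f\circ\rho_m$ back to $\iota_{U_i}\circ\rho_m$, produces a homotopy $S(U_i,m)\times I_N\to X$ that starts at the constant $x_0$ and ends at $\iota_{U_i}\circ\rho_m$, which is exactly the data showing $U_i$ is subdivision-categorical in $X$. Thus $X$ is covered by the $n+1$ subdivision-categorical sets $U_0,\ldots,U_n$, so $\dcat(X)\le n=\dcat(Y)$, and the equality follows by the two-fold application described above.

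I expect the only genuinely delicate points, beyond careful bookkeeping of the subdivision indices $m,k_i,\ell,p,N,M$, to be two. First, verifying that the map-subdivision theorem of \cite{LOS19b} applies in exactly the covering form $\rho_\ell\circ\widetilde f_i=f\circ\rho_m$ needed here, and that this is genuinely available in dimension two (which is where the $2$D hypothesis on $X$ is consumed). Second, checking that the spliced homotopy is continuous in the strong-product sense of \defref{def: products}; this is handled by the same concatenation argument used to prove transitivity in \lemref{lem: homotopy equiv reln}, since adjacent time parameters differ by at most $1$ and hence lie in a single one of the two pieces.
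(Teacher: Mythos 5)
Your proposal is correct and matches the paper's proof essentially step for step: pull the subdivision-categorical cover of $Y$ back along $f$, use the 2D hypothesis to invoke the subdivision-of-maps result of \cite{LOS19b} (Proposition 5.7 there, which directly supplies $\widehat{f}\colon S(V,k+1)\to S(U,k)$ with $\rho_k\circ\widehat{f}=f\circ\rho_{k+1}$, so your common-refinement bookkeeping, while valid, is not needed), and splice $H\circ(\rho_{k+1}\times\mathrm{id})$ with $g$ applied to the pulled-back contracting homotopy, with continuity at the junction handled exactly as you say by the concatenation argument of \lemref{lem: homotopy equiv reln}. The equality for 2D homotopy equivalent images is likewise obtained in the paper by the same two-fold application of the inequality.
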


\begin{proof} The given homotopy $H\colon X \times I_M\to X$ satisfies $H(x,0)= x$ and $H(x,M)=g\circ f(x)$.  Suppose $U \subseteq Y$ is subdivision-categorical in $Y$ and write $C\colon S(U, k) \times I_N \to Y$ for the contracting  homotopy with $C(u',0)=\rho_k(u')$ and $C(u',N)=y_0$ for some $y_0 \in Y$.     Define $V = f^{-1}(U) \subseteq X$.  Since $X$ (and hence $V$) is 2D, Proposition 5.7 of \cite{LOS19b} gives  a map $\widehat{f}\colon S(V, k+1) \to S(U, k)$ with $f\circ \rho_{k+1} = \rho_k\circ \widehat{f}\colon S(V, k+1) \to U$.   Now define a homotopy $G \colon S(V, k+1) \times I_{M+N} \to X$  by
$$
G(v',t):=\begin{cases}
H(\rho_{k+1}(v'),t) & \text{for } 0\leq t\leq M\\
g\big(C(\widehat{f}(v'),t-M)\big) &  \text{for } M\leq t \leq M+N.
\end{cases}\\
$$
It is easy to check that $G(v',0)=\rho_{k+1}(v')$ and $G(v',M+N)=g(y_0)$.  Hence $V$ is subdivision-categorical in $X$.  If $\{U_0, \ldots, U_n \}$ is a subdivision-categorical cover of $Y$, then the same argument shows that  $\{V_0, \ldots, V_n \}$, with each $V_i = f^{-1}(U_i)$, is a subdivision-categorical cover of $X$.  It follows that we have $\dcat(X) \leq \dcat(Y)$.

If $X$ and $Y$ are both 2D and homotopy equivalent, then we may apply this argument to obtain $\dcat(X) \leq \dcat(Y)$, and also interchange the roles of $X$ and $Y$ to obtain $\dcat(Y) \leq \dcat(X)$.  Then we have $\dcat(X) = \dcat(Y)$.
\end{proof}

We suspect that $\dcat$ is an invariant of homotopy type without any restriction on the dimension of the digital images involved, but at present we are not able to establish this generally, beyond  the two dimensional case.   The argument given above would be sufficient  to do so, except that we would need an extension of the results of \cite{LOS19b} to higher-dimensional domains.  Furthermore, we believe  $\dcat$ is actually an invariant of a much weaker notion of equivalence than homotopy equivalence as we have defined it here.
For instance,  the two digital circles $D$ and $C$ of \exref{ex:basic digital images} are not homotopy equivalent, and yet we have  $\dcat(C) = \dcat(D) = 1$ (that $\dcat(C) =1$ may be seen by a direct argument like  that we gave for $\dcat(D) =1$). But to make any advance in this direction, again, would require basic results that extend those of \cite{LOS19b} to higher-dimensional domains. For these reasons, we do not attempt even a preliminary treatment of  topological complexity in the digital setting here.

\section{Questions, Problems, Future Work}\label{sec: future}

We finish by posing some questions and more general problems raised by our work here, and then discussing some aspects of future work that we anticipate as part of our digital homotopy theory project. 

\subsection{Questions} Our results about cofibrations in \secref{sec: cofibrations} suggest a number of questions.   We raised the following question after \lemref{lem: 1 x i cofibration}:

\begin{question}
Is the composition of cofibrations again a cofibration?
\end{question}

\noindent{}We have some very useful, but basic, examples of cofibrations in \thmref{thm: well-pointed interval} and \thmref{thm: endpoint cofibration}.  These may be leveraged to produce  other examples (see the comment after  \lemref{lem: 1 x i cofibration}); we suspect many other maps are cofibrations.  At this point, it seems reasonable to ask the following:   

\begin{question}
Is every inclusion $j \colon A \to X$ a cofibration?  Is every injection of digital images (not necessarily an inclusion) a cofibration?
\end{question}

\noindent{}It would be nice to remove the constraint on dimension from  \thmref{thm: 2D dcat well-def}.

\begin{question}
Is $\dcat$ an invariant of homotopy type?
\end{question}

\noindent{}Also, we may view subdivision of a digital image as essentially a process of ``enlarging."   Generally, our philosophy is that subdivision produces a digital image that should be viewed as equivalent to the original  (see the discussion below). 

\begin{question}\label{que: dcat subdiv}
Does $\dcat(X) = \dcat\big( S(X, k)\big)$?
\end{question}

\subsection{Problems}  \secref{sec: cofibrations} gives a satisfactory definition of cofibration but, as we mention at various points in the paper, \secref{sec: fibrations} establishes a modified homotopy lifting property for certain evaluation maps without actually giving a general definition of fibration.  Now \thmref{thm: Digital Borsuk} and \thmref{thm: Based Digital Borsuk} are quite general results that establish a modified homotopy lifting property for any map to which  they apply.  But not every map that we might imagine should be a ``fibration" will be of the form $j^*\colon  \map(X, Y) \to \map(A, Y)$ induced by some cofibration $j \colon A \to X$ (or $j^*\colon  \map_*(X, Y) \to \map_*(A, Y)$ in the based case).     

\begin{problem}
Formulate a definition of a digital fibration that incorporates the evaluation maps and their modified homotopy lifting property as in \secref{sec: fibrations} as special cases.
\end{problem}

\noindent{}Cofibrations and fibrations are two of the three distinguished types of map that go into an abstract, categorical notion of a homotopy theory, with  the third being a \emph{weak equivalence} (see, e.g.,  \cite{Hir03}).  
 
\begin{problem}
Is it possible to incorporate our notion of cofibration here into a suitable model category setting, and thereby place our emerging digital homotopy theory as a ``homotopy theory," in the technical (abstract) sense of a homotopy theory in a model category?
\end{problem}

\subsection{Discussion of Future Work} We indicate three directions for development within our larger digital homotopy theory project.

First, we anticipate developing a less rigid notion of homotopy equivalence.
For example, a circle might be represented as any of the digital images in \figref{fig:circles}.  In the figure we have indicated adjacencies in the style of a graph and included integer gridlines as dotted lines.
\begin{figure}[h!]
\centering
\includegraphics[trim=100 330 100 310,clip,width=\textwidth]{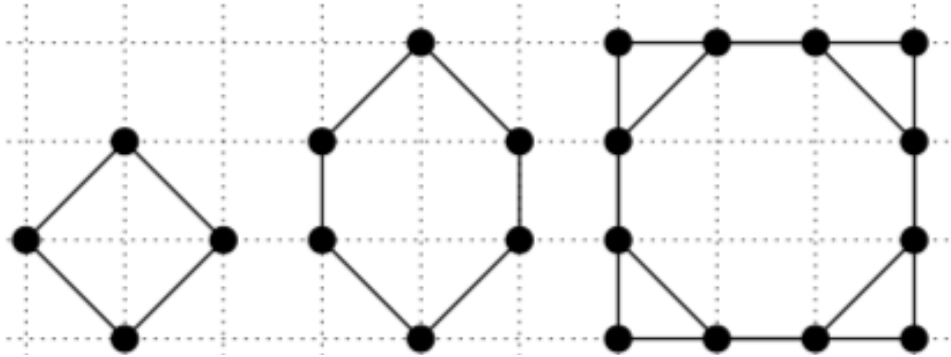}
\caption{\label{fig:circles} The diamond $D$ (left), another digital circle (middle), and a digital circle up to homotopy equivalence (right).}
\end{figure}
From a homotopy point of view, it seems reasonable to regard each of these as equivalent.  But the notion of homotopy equivalence that we have at present gives them as non-equivalent.  It gives, instead, a notion of equivalence comparable to that of isometry in a geometric setting, whereby circles of different sizes are not equivalent.  
Generally speaking, we seek to develop a notion of equivalence for digital images that is less rigid than homotopy equivalence and, instead, combines homotopy equivalence and subdivision.  For instance, we would like a notion of equivalence that treats a subdivision as equivalent to the original digital image (they are generally not homotopy equivalent).  Progress in this direction is represented by the results of \cite{LOS19c}.  In that paper, we introduce a notion of the fundamental group that features subdivision in a prominent role; we show that this fundamental group is preserved by subdivision.  Similarly, we would like our other invariants, such as $\dcat$, to be preserved by subdivision (cf.~\queref{que: dcat subdiv} above).
Further progress in this direction will likely involve extending the results of \cite{LOS19b} to higher-dimensional domains.

Second, and as indicated in the last paragraph of \secref{sec: cat and TC}, we intend to fully develop the notion of  Lusternik-Schnirelmann category in the digital setting and also include the notion of \emph{topological complexity} in a future treatment.  Topological complexity  is another numerical homotopy invariant that arises from the motion planning problem of topological robotics.  See \cite{Far06} for an introduction and \cite{G-L-V18} for some recent references.  With the results of \secref{sec: fibrations},
and especially \corref{cor: TC fibration}, we are already poised to embark on this.
These invariants could have implications for important problems in the digital setting,  such as feature recognition or image manipulation. 
For instance,  it was shown in \cite{GLO13} that in the ordinary topological setting, a space $X$ has topological complexity of one if and only if $X$ is an odd-dimensional sphere. In the  digital setting, it may be possible to use these invariants to recognize features such as circles or spheres.

Third, a broad goal of future work is to arrive at a characterization of a ``(higher-dimensional) digital sphere up to homotopy equivalence."     Towards this goal, we are adapting ideas from \cite{Evako2006, Evako2015}, which give a characterization of digital spheres that may be compared to homeomorphism. In the previous paragraph, we indicated how topological complexity might play a role in characterizing digital circles or spheres.  It is likely that more of the standard machinery of algebraic topology, such as homology and higher-dimensional homotopy groups, will need to be developed (in a way that incorporates our subdivision-oriented point of view) in order to progress in this direction.

%\bibliographystyle{amsplain}
%\bibliography{Notes1}

\providecommand{\bysame}{\leavevmode\hbox to3em{\hrulefill}\thinspace}
\providecommand{\MR}{\relax\ifhmode\unskip\space\fi MR }
% \MRhref is called by the amsart/book/proc definition of \MR.
\providecommand{\MRhref}[2]{%
  \href{http://www.ams.org/mathscinet-getitem?mr=#1}{#2}
}
\providecommand{\href}[2]{#2}

\end{document}